\newtheorem{lemma}{Lemma}[section]
\newtheorem{theorem}[lemma]{Theorem}
\newtheorem{corollary}[lemma]{Corollary}
\newtheorem{proposition}[lemma]{Proposition}
\theoremstyle{definition}
\newtheorem{definition}[lemma]{Definition}
\theoremstyle{definition}
\newtheorem{observation}[lemma]{Observation}
\theoremstyle{definition}
\newtheorem{assumption}[lemma]{Assumption}
\theoremstyle{definition}
\newtheorem{example}[lemma]{Example}
\theoremstyle{definition}
\newtheorem{algorithm}[lemma]{Algorithm}
\theoremstyle{definition}
\newtheorem{construction}[lemma]{Construction}
\long\def\symbolfootnote[#1]#2{\begingroup%
\def\thefootnote{\fnsymbol{footnote}}\footnote[#1]{#2}\endgroup}
\def\qd{{\kern0.5pt
                   q \kern-5.05pt \raise5.8pt\hbox{$\textstyle.$}\kern
0.5pt}}
\newtheorem*{rep@theorem}{\rep@title}
\newcommand{\newreptheorem}[2]{%
\newenvironment{rep#1}[1]{%
 \def\rep@title{#2 \ref{##1}}%
 \begin{rep@theorem}}%
 {\end{rep@theorem}}}
\begin{document}
 
\title{The Weisfeiler-Lehman Method\\ and Graph Isomorphism Testing}
\author{Brendan L. Douglas}

\date{}

\bibliographystyle{plain}

\maketitle

\begin{abstract}

Properties of the `$k$-equivalent' graph families constructed in Cai, F\"{u}rer and Immerman \cite{CFI} and Evdokimov and Ponomarenko \cite{Evdokimov99} are analysed relative the the recursive $k$-dim WL method. An extension to the recursive $k$-dim WL method is presented that is shown to efficiently characterise all such types of `counterexample' graphs, under certain assumptions. These assumptions are shown to hold in all known cases.

\end{abstract}
\section*{Introduction}
In this paper the application of the Weisfeiler-Lehman (WL) method to the graph isomorphism (GI) problem is considered. Following its introduction in the 1970's, this method was considered to be a possible candidate for a solution to the GI problem. However subsequent seminal papers by Cai, F\"{u}rer and Immerman \cite{CFI}, and Evdokimov and Ponomarenko \cite{Evdokimov99} seemed to eliminate this consideration, presenting families of non-isomorphic pairs of graphs which the WL method cannot distinguish in polynomial time, relative to graph size. Indeed, following the work of \cite{CFI}, the question of whether the WL method or some minor variation might solve GI has (to the knowledge of the author) been considered closed. However by analysing the effects of a slight variant of the WL method presented in these works, this paper intends to re-open the question as to whether the general WL approach might be used to solve the GI problem.

In this work we focus on partitioning the vertex set of a given graph to its orbits, rather than on directly providing a certificate characterising the graph's isomorphism class. In particular, whilst the counterexample graph pairs of \cite{CFI, Evdokimov99} cannot be distinguished by the $k$-dim WL method, we consider the case of characterising individual graphs using the recursive $k$-dim WL method.

It is proven that the graphs CFI($G$) and X($G$) (the counterexample graphs constructed by \cite{CFI} and \cite{Evdokimov99} respectively) will be individually characterised by the recursive $(k+1)$-dim WL method, provided that the original graph $G$ is characterised by the recursive $k$-dim WL method.

Hence the direct graph types constructed in \cite{CFI,Evdokimov99} do not necessarily provide counterexamples to the recursive WL method.  Of course directly addressing the recursive $k$-dim WL method was not the purpose of either of these works, and by itself this does not constitute a significant result, in that trivial extensions of these graph can be constructed which provably \emph{do} constitute counterexamples\footnote{For instance, the join of CFI($G$) and CFI$^{'}(G)$ will trivially not be distinguished by the recursive $k$-dim WL method in the case where CFI($G$) is $k$-similar - i.e. when $G$ has no separator of size $k$.}.

However, this result does become significant when combined with the decomposition method of Chapter \ref{sec:decomposition}, in which it is proven that such composite graphs that are also counterexamples to the recursive $k$-dim WL method and additionally satisfy the assumptions of Section \ref{sec:assumptions}, will be characterised by an extended $k$-dim WL method which includes this decomposition process.

In particular, although the distinction between using the WL method to either directly produce a graph certificate or to refine the vertex set to its orbits may seem at first to be a trivial one, since the inability to produce a certificate for individual graphs implies the inability to partition certain combinations of these graphs down to their orbits, we analyse the conditions under which graphs derived from these counterexample families have been shown to not be efficiently partitioned down to their orbits by the WL method. We show that in the cases where this trivially occurs, the graphs of interest possess certain restrictive properties, allowing an extension to the WL method that includes the decomposition method of Chapter \ref{sec:decomposition} to partition these graphs down to their orbits, and thus allowing the recursive WL method to distinguish them.

Part of the significance of these results is that there are no longer any known counterexamples to this extended WL method. The constraints imposed on the initial composite graphs to facilitate the proofs of Chapter \ref{sec:decomposition} do not seem particularly onerous, in the sense that it does not appear easy to circumvent them, finding graphs for which they are not satisfied. It is the focus of future efforts to investigate general properties of the `counterexample' $k$-equivalent graphs, together with attempting to impose further constraints on graphs for which the decomposition method of Section \ref{sec:decomposition} does not apply. Finding such graphs would represent a true advance in the study of the WL method, as they must possess novel properties, presumably intimately related to the property of $k$-equivalence.

Note that far from providing convincing arguments that the Weisfeiler-Lehman method solves GI, the aim of this paper is merely to argue that this 


The structure of this paper is as follows: Chapter \ref{sec:GIprob_back} provide somes background to the graph isomorphism problem in general, and the Weisfeiler-Lehman method in particular. A formal description of the WL method is then provided in Section \ref{sec:formal}, discussing and deriving some basic properties of this method, and in this context highlighting and discussing some of the major counterexample results of \cite{CFI} in Section \ref{sec:CFI}. Chapter \ref{sec:schemes} provides a brief background to coherent configurations, discussing the counterexample graphs derived in \cite{Evdokimov99}. Following this introductory material, Chapters \ref{sec:ext}-\ref{sec:decomposition} contain the new results provided by this work. In Chapter \ref{sec:ext} the properties of general graph extensions (of which the graph families of \cite{CFI} and \cite{Evdokimov99} are examples) are explored, and we show that the ability of the recursive $k$-dim WL method to distinguish graphs is invariant under such extensions, given certain assumptions. In Chapter \ref{sec:orbit} the known $k$-equivalent `counterexample' graphs are recast relative to the recursive $k$-dim WL method. In Chapter \ref{sec:WLprops} some relevant properties of the WL method are derived. Finally in Chapter \ref{sec:decomposition} an extension to  the recursive $k$-dim WL method is presented and is shown to successfully characterise all known $k$-equivalent graphs, given certain assumptions, with some implications of these results discussed in Chapter \ref{sec:conclusions}.

\setcounter{section}{-1}
\section{Graph Theoretic Notation}
\label{sec:notation}

For a graph $G$, we denote the vertex and edge sets of $G$ by $V(G)$ and $E(G)$ respectively. $\overline{G}$ represents the complement of $G$, in which edges and non-edges are switched. Let $v \in V(G)$. Then $d(v)$ and $e(v)$ denote the sets of neighbours and non-neighbours of $v$, such that
\begin{align*}
 &d(v) = \{ x \in V(G) : \{v,x\} \in E(G) \}, \;\;\textrm{ and similarly}\\
 &e(v) = \{ x \in V(G) : \{v,x\} \notin E(G) \}.
\end{align*}
The valency of $v$ is the number of edges incident with $v$, namely $|d(v)|$. We will generally be dealing with undirected graphs. Where this is not the case, the neighbour set $d(v)$ includes di-edges incident with $v$ oriented in either direction. If $H$ is a proper subgraph of $G$, denoted $H \subset G$, sets $S$ in (resp. operations on) $G$ whose extent (resp. action) is \emph{restricted} to $H$ will be denoted by $S\hspace{-0.1cm}\mid_H$, or $S$ restricted to $H$. Given some property or value $c$ held by some members of a set $S$, the set of all members of $S$ with the property $c$ is denoted by $[c]$. For instance, the set of all vertices (within some implicit set $S$) with colour class $c$ is $[c]$. Alternatively, when $c$ is a positive integer, $[c]$ denotes the set $\{1,\ldots,c\}$. Where $S \subset V(G)$ for some graph $G$, the subgraph of $G$ induced on $S$ will simply be referred to as $S \subset G$, where the question of whether $S$ denotes a set of vertices or a graph will be clear from the context where not explicitly stated. Similarly, $G \backslash H$ denotes either the relevant induced subgraph or the vertex set, depending on the context. The \emph{distance} between two vertices $u,v \in V(G)$ is defined as the length of the smallest path connecting them, and denoted $\textrm{dist}(x,y)$.

A \emph{separator} of a graph $G$ is a subset $S \subset V(G)$ such that $G \backslash S$ has no connected components of size $|V|/2$ or larger. The \emph{separator size} of $G$ is the size of the smallest such separator. The \emph{join} of two graphs $G_1$ and $G_2$ is the graph $H = G_1 \cup G_2$ in which $V(H) = V(G_1) \cup V(G_2)$ and $E(H) = E(G_1) \cup E(G_2) \cup A$, where
\begin{align}
 A = \{ \{x,y\} \in V(H) \times V(H) : x \in V(G_1), y \in V(G_2) \}.
\end{align}

\setcounter{equation}{0}
\section{The Graph Isomorphism Problem}
\label{sec:GIprob_back}

\subsection{A General Background}

The question of efficiently determining whether two given graphs are isomorphic is a long-standing open problem in mathematics. It has attracted considerable attention and effort, due both to its practical importance and its relationship to questions of computational complexity. Examples of excellent references articles providing a more thorough background to the GI problem can be found in \cite{Zemlyachenko85}, \cite{Miller77} and \cite{Read77}.

The exact complexity status of the graph isomorphism (GI) problem remains unknown. It is known to be in the class NP, however neither an NP-completeness proof or a polynomial time solution have been found. It is generally considered unlikely to be in NP-complete \cite{Arvind06, Koebler93}, in part because the corresponding testing and counting problems are polynomial-time equivalent, unlike the apparent case for all other known NP-complete problems. Further supporting evidence is provided by Sch\"{o}ning \cite{Schoning87}, who demonstrates that GI is not NP-complete unless the polynomial-time hierarchy collapses. As such it provides a promising candidate for a problem that is neither in P nor NP-complete.

Efficient GI algorithms do exist for several restricted classes of graphs, such as trees \cite{Colbourn81}, planar graphs \cite{Hopcroft74} and graphs with certain bounded parameters, including valence \cite{Luks80}, eigenvalue multiplicity \cite{Babai82} and genus \cite{Miller80}. The GI problem has several additional, relevant properties. It is generally easy to solve in practice, and for many, if not most practical applications the GI problem can be viewed as solved, in that the types of graphs involved can be efficiently characterised by existing algorithms, such as Brendan Mackay's `Nauty' package \cite{McKay81}. It is also easy to solve for almost all graphs \cite{Babai80, Babai79}. However the best current GI algorithm for general graphs has an upper bound of O($\textrm{e}^{\sqrt{n \textrm{log} n}}$) \cite{Babai83, Zemlyachenko85, Babai84}. Hence the interest in GI lies largely in its complexity status, it being one of the interesting problems where practical and theoretical notions of efficiency do not coincide.

GI is polynomial time equivalent to several related problems, including finding an isomorphism map between graphs, if it exists, and determining either the order, generators or orbits of the automorphism group of a graph \cite{Mathon79}. Proposed algorithms to distinguish graphs generally fall into two main (not necessarily disjoint) categories: combinatorial and group theoretic. Here we will be discussing a common type of combinatorial method, based on the iterative vertex-classification (or vertex-refinement) method, and collectively termed the Weisfeiler-Lehman method.

\subsection{The Weisfeiler-Lehman method}

The general type of method labelled as iterative vertex classification is discussed in \cite{Read77} and \cite{Zemlyachenko85}. 

Perhaps the simplest such method begins by partitioning the vertex set of a graph (or equivalently colouring the vertices) according to vertex valency. Then at each subsequent step the colour of each vertex is updated to reflect its previous colour together with the multiset of colours of its neighbours. This proceeds iteratively until a stable colouring (or equitable partition) is reached. This method is also known as the 1-dimensional Weisfeiler-Lehman method. The history and details of the generalised $k$-dimensional Weisfeiler-Lehman method (which we will term the $k$-dim WL as in \cite{CFI} and \cite{Pikhurko10}) and other related methods can be found in \cite{Weisfeiler76, WL68, Friedland89}, among others. Although conceptually quite simple, the 1-dim WL method succeeds in characterising almost all graphs in linear time \cite{Babai79}, although it cannot for instance partition the vertex set of regular graphs. 

In the $k$-dim WL method, we instead start with $k$-tuples of the vertex set, colouring them according to their isomorphism type. At each step the set of $k$-tuples is further partitioned by considering the ordered multiset of colours of the `neighbours' of a given $k$-tuple (here the neighbours are the $k$-tuples differing in exactly one element). Again, this is repeated until an equitable partition is reached. Following its introduction in \cite{WL68}, the general $k$-dim WL method and related methods have reappeared several times, and in various forms. For instance Audenaert \emph{et al}. \cite{Audenaert07} proposed a graph isomorphism method based on the symmetric powers of the adjacency matrix of a graph, which was later shown in \cite{Alzaga10} and \cite{Barghi09} to be no more effective than the $k$-dim WL method. Similarly, GI algorithms based on quantum walks have been proposed in \cite{Douglas08} and \cite{Gamble10}. The algorithm of \cite{Gamble10} was shown in \cite{Smith10} to be no stronger than the $k$-dim WL, while a corollary of the discussion here is that the algorithm of \cite{Douglas08} is trivially no stronger than a variant of the $k$-dim WL method, which we term the depth-$(k-1)$ 1-dim WL method, and define in Chapter \ref{sec:formal}. One significant aspect of the WL method alluded to by its continuing reappearance in varying forms is its intuitive, in some ways natural, combinatorial form.

A more formal definition is given in Chapter \ref{sec:formal}, however at this point it is clear that just as the 1-dim WL fails for regular graphs, providing no useful information beyond the graph's order, the 2-dim WL will fail for strongly regular graphs. Similarly, the $k$-dim WL method cannot partition the vertex set of $k$-isoregular graphs (alternatively $k$-tuple regular graphs), defined as in \cite{Cameron80} to be graphs in which the number of common neighbours of any $k$-tuple of a given isomorphism type is constant (for instance, strongly regular graphs are 2-isoregular). The results of \cite{Golfand78} and \cite{Cameron80}, classifying 5-isoregular graphs to a few trivial cases, and proving that 5-isoregular graphs are $k$-isoregular for all $k$, in part supported the conjecture that the $k$-dim WL method might, with $k$ some small constant, suffice to classify all graphs.

The $k$-dim WL method can be implemented for a graph on $n$ vertices in time $O(n^{k+1})$, hence if even the $O(\textrm{log}(n))$-dim WL method sufficed to distinguish all graphs on $n$ vertices it would solve GI. However the results of \cite{Furer87, CFI} disposed of this possibility, providing examples of a family of pairs of graphs with $O(n)$ vertices which the $(n-1)$-dim WL method failed to distinguish. This situation was explored further in \cite{Evdokimov99} and \cite{Barghi09}, in terms of coherent configurations, with an additional family of counterexample graphs presented.

\setcounter{equation}{0}
\section{Formal description of WL method}
\label{sec:formal}

Let $G$ be an edge- and vertex-coloured graph, where $|V(G)| = n$, and for any $u, v \in V(G)$ such that $(u,v) \in E(G)$, $\omega(u)$ and $\omega(u,v)$ denote the colouring of the vertex $u$ and edge $(u,v)$ respectively. Consider the set $V(G)^k$ of $k$-tuples of $V(G)$. The $k$-dim WL method proceeds iteratively, with the colour of all $k$-tuples being updated at each step. Given an ordered $k$-tuple $S = (v_1, v_2, \ldots, v_k) \in V(G)^k$, consider the ordered set of `neighbouring' $k$-tuples 
\begin{align}
\label{eq:S'}
 S'(x) = ((v_1,\ldots,v_{k-1},x),\ldots,(x,v_2,\ldots,v_k)), \;\; x \in V(G).
\end{align}
Then after $t$ steps of the $k$-dim WL method, the colour of $S \in V(G)^k$ is denoted by $\textrm{WL}^{t}_k(S)$, such that
\begin{align}
\label{eq:WL_k}
 &\textrm{WL}^{0}_k(S) = \textrm{ iso}(S), \: \textrm{ and} \notag\\
 &\textrm{WL}^{t}_k(S) = \langle \, \textrm{WL}^{t-1}_k(S), \:\textrm{Sort}\{\, \textrm{WL}^{t-1}_k(S'(x)) : x \in V(G)\} \: \rangle,
\end{align}
where `iso($S$)' denotes the isomorphism class of the ordered $k$-tuple $S$, such that for $S_1 = (x_1,\ldots,x_k)$, $S_2 = (y_1,\ldots,y_k) \in V^k$, we have $\textrm{iso}(S_1) = \textrm{iso}(S_2)$ if and only if for all $i,j \in [k]$ the following hold:
\begin{enumerate}
 \item $x_i = x_j\,$ if and only if $\,y_i = y_j$.
 \item $(x_i,x_j) \in E(G)$ if and only if $(y_i,y_j) \in E(G)$ and $\omega(x_i,x_j) = \omega(y_i,y_j)$.
 \item $\omega(x_i) = \omega(y_i)$.
\end{enumerate}
Note that the sorting function `Sort' used here applies only to the outermost dimension of nested lists, unless otherwise stated. In particular, it does not alter the internal ordering of each individual ordered set comprising $S'(x)$. As an additional note regarding notation, the angle brackets enclosing the right hand side of (\ref{eq:WL_k}) are used in the style of \cite{CFI} to delimit an \emph{ordered} set. They are used here to take the place of round brackets (which denote ordered sets elsewhere in this work) simply for aesthetic purposes, and this convention will be continued when describing $k$-dim WL colour classes as above\footnote{Specifically, this notational convention will only be used to \emph{enclose} the definition (or reference to the definition) of such colour classes.}.

At each step in the process the $\textrm{WL}^{t}_k(S)$ multisets are sorted lexicographically then assigned a number from 1 to $n$ denoting the new colour class of $S$, together with a decoding table to store the remaining information for the purposes of constructing a certificate for the graph at the end. The algorithm stops when the colouring of $k$-tuples is stable; when a further iteration of the method does not partition the set of $k$-tuples further. Let this occur after $r$ steps, such that for all $S_1, S_2 \in V(G)^k$,
\begin{align}
 \textrm{WL}^{r+1}_k(S_1) = \textrm{WL}^{r+1}_k(S_2) \;\;\textrm{ if and only if }\;\; \textrm{WL}^{r}_k(S_1) = \textrm{WL}^{r}_k(S_2).
\end{align}
Then the final colouring of $k$-tuples is denoted $\textrm{WL}^{\infty}_k(S)$, or simply $\textrm{WL}_{k}(S)$, such that
\begin{align}
 \label{eq:WL_k_infty}
 \textrm{WL}_k(S) = \langle \:\textrm{Sort}\{\, \textrm{WL}^{r}_k(S'(x)) : x \in V(G)\} \: \rangle
\end{align}
Hence for graphs $G$ and $H$, $\textrm{WL}_k(G) = \textrm{WL}_k(H)$ if and only if there exists a bijection mapping $V(G)$ to $V(H)$ preserving the colouring of $k$-tuples.

Note that given some constant $k$ and a graph on $n$ vertices, this stable colouring (also known as the equitable partition) will be reached in O(poly($n$)) time. Hence if the $k$-dim WL method succeeded in partitioning all graphs down to their orbits (for some constant or slowly growing $k$), it would solve the GI problem.

Given the stable partitioning of $V^k$, a corresponding partitioning of $t$-tuples, for any $t < k$ can be constructed, such that two $t$-tuples are assigned identical colours if and only if they cannot possibly be distinguished based only on the colouring of $k$-tuples. The process for colouring the $t$-tuples $\mathbf{x} = (x_1,\ldots,x_t)$ as $\textrm{WL}_k(\mathbf{x})$ is detailed in Chapter \ref{sec:WLprops}, in which the following recursive relation is derived:
\begin{align}
 \textrm{WL}_k(\mathbf{x}) = \langle \:\textrm{Sort}\{ \, \textrm{WL}_k(\mathbf{x},i) : i \in V(G)\} \:\rangle,
\end{align}
where $(\mathbf{x},i)$ denotes the ordered $(t+1)$-tuple $(x_1,\ldots,x_t,i)$. Similarly, a colouring of $t$-tuples for $t > k$ can be constructed, again defined recursively by considering the ordered set of $(t-1)$-tuples contained in the $t$-tuple of interest.

In terms of notation, where the $t$-tuple $\mathbf{x}$ belongs to both $G$ and some subgraph of interest $H \subset G$, the colouring $\textrm{WL}_k(\mathbf{x})$ may refer to the colouring of the $t$-tuple within either $G$ or the induced subgraph $H$. Which it refers to will either be clear from the context or specified by the notation $\textrm{WL}_k(\mathbf{x}) \hspace{-0.1cm}\mid_G$ or $\textrm{WL}_k(\mathbf{x}) \hspace{-0.1cm}\mid_H$, meaning the $t$-tuple colouring is \emph{relative to} the $k$-tuple colourings of $G$ or $H$ respectively.

Several closely related variants on the $k$-dim WL method have been proposed. One such variant, appearing in \cite{Weisfeiler76}, employs what could be described as a depth-first approach to the WL method. It is introduced under the umbrella term of `deep stabilisation' in \cite{Weisfeiler76}, and involves stabilising a $k$-tuple followed by applying the 1-dim WL method, then cycling over all possible such $k$-tuples. We will term this method the depth-$k$ 1-dim WL method, and note briefly that it is in some ways analogous to the $(k+1)$-dim WL method, and might be expected to have similar refinement power. Indeed, all relevant results here regarding the $k$-dim WL method can be extended to the depth-$(k-1)$ 1-dim WL method.

As mentioned, for some time the $k$-dim WL method, with sufficiently small $k$ (e.g. where $k = $ O(log($n$)) or even where $k$ is a constant) was thought to represent a potential candidate for the solution to the GI problem. Then Cai, F\"{u}rer and Immerman \cite{CFI} introduced a family of counterexample graphs (we will term them the CFI counterexamples) for which the entire global properties of the graph could not be characterised using a sufficiently low dimension WL method. Specifically, they constructed pairs of non-isomorphic graphs on O($n$) vertices that were distinguished by the $n$-dim WL method, but not by the $(n-1)$-dim WL method.

\subsection{CFI Counterexamples}
\label{sec:CFI}

In the work of \cite{CFI}, pairs of non-isomorphic graphs with O($k$) vertices which cannot be characterised using the $k$-dim WL method were constructed from graphs with separator size $k+1$. In particular, given a graph $G$ they define a related graph $X(G)$ (to be termed CFI($G$) for the remainder of this work), in which each vertex $v \in V(G)$ of valency $k$ is replaced by the graph $\textrm{CFI}(v)$, defined (as in \cite{CFI}) by the relations:
\begin{align}
 V(\textrm{CFI}(x)) = & A(v) \cup B(v) \cup M(v), \textrm{ where } A(v) = \{a_i : 1 \le i \le k \}, \notag\\
 & B(v) = \{b_i : 1 \le i \le k\}, \textrm{ and}  \notag\\
 & M(v) = \{m_S : S \subseteq \{1, \ldots, k\}, |S| \textrm{ is even}\} \\
 E(\textrm{CFI}(x)) = & \{(m_S,a_i) : i \in S\} \cup \{(m_S,b_i) : i \notin S\}  \notag
\end{align}
The middle vertices $M(v)$ of $\textrm{CFI}(v)$ are coloured differently to the other vertices ($A(v) \cup B(v)$). Hence each vertex $v$ of degree $k$ is replaced by a graph of size $2^{k-1}+2k$, consisting of a `middle' section (the $M(v)$ vertices) of size $2^{k-1}$, and $k$ $\{a_i,b_i\}$ pairs of vertices representing the endpoints of each edge incident with $v$, such that each $\{a_i,b_i\}$ pair, $1 \le i \le k$, is associated with some edge $\{v,u\}$ ($u \in V$) incident with $v$. Furthermore, for each edge $\{u,v\} \in E(G)$, the $(a_i,b_i)$ pairs associated with the endpoints of $\{u,v\}$, termed $\{a_{u,v},b_{u,v}\}$ and $\{a_{v,u},b_{v,u}\}$ respectively, are connected, such that $a_{u,v}$ is connected to $a_{v,u}$, and $b_{u,v}$ is connected to $b_{v,u}$.

Similarly, a graph CFI$'$($G$) is defined as above, however with a \emph{single} $\{a_{u,v},a_{v,u}\}, \{b_{u,v},b_{v,u}\}$ edge pair `twisted', in that these two edges are replaced by the edges $\{a_{u,v},b_{v,u}\}, \{b_{u,v},a_{v,u}\}$. Basic properties of these graphs are discussed in detail in \cite{CFI} (also see \cite{Miyazaki97} and \cite{Pikhurko10} for additional details). Here we will recall some pertinent results.

\begin{lemma}
 $|\textrm{Aut}(\textrm{CFI}_k)| = 2^{k-1}$. Each $g \in \textrm{Aut}(\textrm{CFI}_k)$ corresponds to interchanging $a_i$ and $b_i$ for each $i$ in some subset $S$ of $\{1,2,\ldots,k\}$ of even cardinality.
\end{lemma}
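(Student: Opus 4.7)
The plan is to exploit the bipartite-like structure of $\textrm{CFI}_k$ between the middle vertices $M(v)$ and the endpoint vertices $A(v) \cup B(v)$. Because $M(v)$ is coloured differently from $A(v) \cup B(v)$, any automorphism $g \in \textrm{Aut}(\textrm{CFI}_k)$ must preserve these two sets setwise. I will also take as given, from the way $\textrm{CFI}(v)$ sits inside $\textrm{CFI}(G)$ (each pair $\{a_i,b_i\}$ is the port for a distinct external edge incident with $v$, and so is labelled by the index $i$), that each pair $\{a_i,b_i\}$ is individually distinguishable. This forces $g$ to fix every $\{a_i,b_i\}$ setwise.

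With those constraints in place, the first step is to parametrise the restriction of $g$ to $A(v) \cup B(v)$. Since each pair is fixed setwise, the restriction is determined by a single bit $\epsilon_i \in \{0,1\}$ for each $i$, recording whether $g$ swaps $a_i$ with $b_i$, or equivalently by the subset $T = \{i : \epsilon_i = 1\} \subseteq \{1,\ldots,k\}$. The second step is to read off the induced action on $M(v)$. Since $N(m_S) = \{a_i : i \in S\} \cup \{b_i : i \notin S\}$ and $g$ preserves adjacency, $g(m_S)$ must have neighbourhood $g(N(m_S))$. A short case split on $i \in S$ versus $i \notin S$, combined with $i \in T$ versus $i \notin T$, gives $g(N(m_S)) = \{a_i : i \in S \triangle T\} \cup \{b_i : i \notin S \triangle T\}$, which is precisely $N(m_{S \triangle T})$. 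Because distinct $m_{S'}$ have distinct neighbourhoods, this forces $g(m_S) = m_{S \triangle T}$.

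The final step is to apply the parity constraint. Since $M(v)$ contains only $m_{S'}$ with $|S'|$ even, the assignment $m_S \mapsto m_{S \triangle T}$ is a well-defined permutation of $M(v)$ if and only if $|S\triangle T| \equiv |S|+|T| \equiv 0 \pmod 2$ for every even $S$, i.e.\ if and only if $|T|$ is even. Conversely, for any even $T$ the prescriptions $a_i \leftrightarrow b_i$ for $i \in T$ and $m_S \mapsto m_{S \triangle T}$ manifestly define a graph automorphism, so $\textrm{Aut}(\textrm{CFI}_k)$ is in bijection with the even subsets of $\{1,\ldots,k\}$, of which there are $\sum_{j \text{ even}} \binom{k}{j} = 2^{k-1}$.

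The most delicate point is the claim in paragraph one that each pair $\{a_i,b_i\}$ is fixed setwise. Without the pair-labelling inherited from $\textrm{CFI}(G)$, one could additionally permute pair indices: already for $k=2$ the graph with only the $M$/$(A\cup B)$ colouring is two disjoint $3$-paths, whose automorphism group has order $8$ rather than $2$. The lemma should therefore be read relative to the pair colouring implicit in the CFI construction; under that convention the three steps above yield exactly the $2^{k-1}$ automorphisms of the stated form.
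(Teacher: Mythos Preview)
The paper does not actually give its own proof of this lemma: it is stated in Section~\ref{sec:CFI} as one of the ``pertinent results'' recalled from \cite{CFI}, with no argument supplied. So there is nothing in the paper to compare your proof against directly.

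That said, your argument is correct and is essentially the standard one from \cite{CFI}. The three steps --- parametrising the action on $A(v)\cup B(v)$ by a subset $T$, computing that the induced action on middle vertices is $m_S \mapsto m_{S\triangle T}$, and then reading off the evenness constraint on $T$ from the parity restriction defining $M(v)$ --- are exactly how the result is established in the original source. Your closing caveat about the pair-labelling is also well taken: the statement $|\textrm{Aut}(\textrm{CFI}_k)| = 2^{k-1}$ is indeed to be read relative to the convention that the $\{a_i,b_i\}$ pairs carry distinct labels (inherited from the distinct edges of $G$ incident with $v$), and you correctly identify the $k=2$ obstruction to the unlabelled version.
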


\begin{lemma}
 Given a graph $G$, consider a graph CFI$\,''(G)$ defined as in CFI($G$) and CFI$\,'(G)$ above, however with $t$ edges twisted. Then $\textrm{CFI}\,''(G) \cong \textrm{CFI}(G) \textrm{ iff } t\:$ is even, and CFI$\,''(G) \cong \textrm{CFI}\,'(G) \textrm{ iff } t \textrm{ is odd.}$
\end{lemma}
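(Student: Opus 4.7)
The plan is to reduce the classification to an action on twist sets $T \subseteq E(G)$ and show that orbits under this action are characterised entirely by parity. Let $\textrm{CFI}_T(G)$ denote $\textrm{CFI}''(G)$ with twisted edges exactly $T$, so $\textrm{CFI}(G) = \textrm{CFI}_\emptyset(G)$ and $\textrm{CFI}'(G) = \textrm{CFI}_{\{e_0\}}(G)$ for any single $e_0 \in E(G)$. The statement reduces to: $\textrm{CFI}_T(G) \cong \textrm{CFI}_{T'}(G)$ iff $|T| \equiv |T'| \pmod{2}$.

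For the ``if'' direction, I would lift the local automorphisms provided by Lemma 2.1 to global isomorphisms between different $\textrm{CFI}_T(G)$. For each vertex $v \in V(G)$ and each even-cardinality subset $S$ of the edges incident to $v$, swapping $a_{v,u} \leftrightarrow b_{v,u}$ simultaneously for every $\{v,u\} \in S$ leaves each $\textrm{CFI}(w)$ with $w \neq v$ pointwise fixed and exactly toggles the twist status of each edge in $S$. This defines an isomorphism $\Phi_{v,S}: \textrm{CFI}_T(G) \to \textrm{CFI}_{T \triangle S}(G)$, where $\triangle$ denotes symmetric difference.

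Next I would show that these local moves generate every symmetric-difference update $T \mapsto T \triangle R$ with $|R|$ even. Assuming $G$ is connected (the standard assumption in CFI constructions), any two edges $e_1, e_2 \in E(G)$ are joined by a path. A telescoping composition of pair-swaps at each intermediate vertex of this path produces a single global isomorphism whose net effect is to toggle precisely $\{e_1, e_2\}$. Iterating this ``pair-cancellation'' step reduces any twist set $T$ either to $\emptyset$ (if $|T|$ is even) or to a single edge (if $|T|$ is odd), giving $\textrm{CFI}_T(G) \cong \textrm{CFI}(G)$ or $\textrm{CFI}_T(G) \cong \textrm{CFI}'(G)$ respectively.

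For the ``only if'' direction I would argue that any isomorphism $\Psi: \textrm{CFI}_T(G) \to \textrm{CFI}_{T'}(G)$ must carry each gadget $\textrm{CFI}(v)$ to some $\textrm{CFI}(v')$: the middle vertices $M(v)$ are distinguished from $A(v) \cup B(v)$ by colour, and each $\textrm{CFI}(v)$ is isolated from the others after deleting the inter-gadget $a$-$a$ and $b$-$b$ edges. Hence $\Psi$ descends to an automorphism of $G$ and restricts on each gadget to an isomorphism $\textrm{CFI}(v) \to \textrm{CFI}(\Psi(v))$. By Lemma 2.1 each such restriction acts by an even-subset swap of $\{a_{v,u}, b_{v,u}\}$ pairs at $v$. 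Summing these local even subsets (pulled back through the induced map on $G$) double-counts each globally toggled edge, so the parity of $T$ and $T'$ must agree. The main obstacle is this structural reduction: verifying that every isomorphism really does respect the gadget decomposition and yields a well-defined automorphism of $G$. Once that is in place, the parity argument combined with Lemma 2.1 closes out both directions.
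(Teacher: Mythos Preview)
The paper does not supply its own proof of this lemma: it is stated as a result ``recalled'' from \cite{CFI}, alongside Lemma~2.1 and the subsequent corollary, with no argument given in the text. So there is nothing in the paper to compare your proposal against beyond the fact that the statement is attributed to \cite{CFI}.

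That said, your proposal is the standard CFI argument and is correct. The ``if'' direction via compositions of local even swaps at each gadget (pushing one twist along a path to cancel another) is exactly how Cai, F\"urer and Immerman establish it. For the ``only if'' direction, your reduction is sound: since the middle vertices carry a distinguished colour and the only edges between non-middle vertices are the inter-gadget $a$--$a$/$b$--$b$ pairs, deleting those edges isolates each $\textrm{CFI}(v)$ as a connected component, so any colour-preserving isomorphism permutes gadgets and induces an automorphism $\sigma$ of $G$. Your parity count then goes through cleanly: writing $s_v(u)\in\{0,1\}$ for whether the pair $\{a_{v,u},b_{v,u}\}$ is swapped, one gets $t'_{\sigma(u)\sigma(v)} \equiv t_{uv} + s_u(v) + s_v(u) \pmod 2$, and summing over $E(G)$ gives $|T'| \equiv |T| + \sum_v |S_v| \pmod 2$, with each $|S_v|$ even by Lemma~2.1. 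The obstacle you flag (that every isomorphism respects the gadget decomposition) is genuine but is resolved by the colouring of the middle vertices exactly as you outline.
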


\begin{corollary}
 $\textrm{CFI}(G) \ncong \textrm{CFI}\,'(G)$.
\end{corollary}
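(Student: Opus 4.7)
The plan is to derive this as an immediate parity corollary of the preceding lemma. First I would observe that $\textrm{CFI}'(G)$ is literally an instance of $\textrm{CFI}\,''(G)$ with $t = 1$ twisted edge; this is just a matter of matching definitions. Then I would apply the first clause of the preceding lemma with $t = 1$, which asserts $\textrm{CFI}\,''(G) \cong \textrm{CFI}(G)$ if and only if $t$ is even. Since $1$ is odd, this forces $\textrm{CFI}'(G) \ncong \textrm{CFI}(G)$, which is precisely the statement of the corollary.

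The only point that deserves a brief justification is the well-definedness of the object $\textrm{CFI}'(G)$: the construction involves a choice of which single edge of $G$ to twist, and a priori different choices might yield non-isomorphic graphs. However, the second clause of the preceding lemma, applied to any two single twists (each giving $t = 1$, hence odd), confirms that all such graphs lie in the common isomorphism class of $\textrm{CFI}'(G)$, so the choice is immaterial.

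Hence there is no real obstacle; the entire content of this corollary is the observation that $0$ and $1$ have opposite parities modulo $2$, with the substantive work already carried out in the preceding lemma. In writing it up I would simply state the specialisation $t = 1$ and cite the lemma, without further embellishment.
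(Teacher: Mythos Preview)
Your proposal is correct and matches the paper's approach exactly: the paper states this corollary without proof, treating it as an immediate consequence of the preceding lemma, which is precisely the parity specialisation $t=1$ you describe. Your additional remark on well-definedness of $\textrm{CFI}'(G)$ is a nice aside but not strictly required for the corollary as stated.
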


Most importantly, the work of \cite{CFI} proved that given a graph $G$ with separator size $k+1$, the $k$-dim WL method cannot directly distinguish the graphs CFI($G$) and CFI$'$($G$).
Specifically, while performing the $k$-dim WL method on CFI($G$) and CFI$'$($G$), at each step the lexicographically sorted multisets of colours are identical.


A similar family of graphs were constructed in \cite{Evdokimov99}, using different methods. They introduce the term `$k$-equivalent graphs' to describe graphs which the $k$-dim. WL method cannot distinguish. Although this trivially encompasses isomorphic graphs, when the term is used here it will refer solely to cases where at least two non-isomorphic graphs exist that cannot be distinguished by the $k$-dim. WL method. In particular, the following terminology will be used.

\begin{definition}
 A graph $G$ will be termed $k$-equivalent if:
 \begin{itemize}
  \item There exists a graph $H$, $G \ncong H$ such that $\textrm{WL}_k(G) = \textrm{WL}_k(H)$. 
  \item $k$ is the largest integer for which this is true (a $k$-equivalent graph is not $(k-1)$-equivalent, hence cannot be $(k+1)$-equivalent).
 \end{itemize}
\end{definition}

It will be convenient to make one exception to the terminology that a graph is $k$-equivalent only if $k$ is the largest such integer, namely for the definition of a non-$k$-equivalent graph.

\begin{definition}
 A graph $G$ is \emph{non}-$k$-equivalent if there does not exist a graph $H$, $G \ncong H$ such that $\textrm{WL}_k(G) = \textrm{WL}_k(H)$.
\end{definition}

\begin{definition}
 Two $k$-equivalent graphs $G_1$ and $G_2$ are \emph{mutually $k$-equivalent} if $\textrm{WL}_k(G_1) = \textrm{WL}_k(G_2)$.
\end{definition}

Mutually $k$-equivalent graphs are not necessarily non-isomorphic, however they will be assumed to belong to a $k$-equivalence class containing at least two distinct isomorphism classes. Namely, if $G_1$ and $G_2$ are mutually $k$-equivalent, and $G_1 \cong G_2$, then $ \exists \; H \ncong G_1$ such that $H$ and $G_1$ are mutually $k$-equivalent.

A related concept to be employed in the following work is that of $k$-similarity.

\begin{definition}
 Two graphs $G$ and $H$ are termed \emph{$k$-similar} if $\textrm{WL}_k(G) = \textrm{WL}_k(H)$. This is denoted by $G \sim H$ (where the relevant $k$ will be clear from the context).
\end{definition}

Hence mutually $k$-equivalent graphs are $n$-similar for all $n \le k$\footnote{And explicitly are \emph{not} $n$-similar for all $n > k$.}, and isomorphic graphs are $n$-similar for all $n$. Note that these concepts of $k$-similarity and $k$-equivalence were used in \cite{Barghi09} (and implicitly in \cite{Alzaga10}) to demonstrate that any two $k$-similar graphs have identical $k$-th symmetric powers, hence addressing the proposition of \cite{Audenaert07}.

\begin{definition}
 Two subgraphs $S \subset V(G)$, $R \subset V(H)$ satisfy the relation $S \sim_k R$ if and only if $\textrm{WL}_k(S) \mid_G = \textrm{WL}_k(R) \mid_H$. Note that $S \sim_k R$ only if $G \sim_k H$.

 In the limit where $S \sim_k R$ for all $k>0$, $k$-similarity becomes isomorphism, and is denoted by $S \sim R$.
\end{definition}

\setcounter{equation}{0}
\section{Coherent Configurations}
\label{sec:schemes}

As it was first proposed in the work of \cite{WL68}, the Weisfeiler-Lehman method takes the form of a matrix algebra associated with a graph, termed the cellular algebra, and later known as the adjacency algebra (or basis algebra) of a coherent configuration. This chapter will provide a brief background to coherent configurations, introducing a further set of $k$-equivalent graphs based on coherent configurations. For a more thorough background to coherent configurations and their relation to the WL method, see \cite{Cameron03,Friedland89,Evdokimov99,Barghi09}.

\subsection{Definitions}

Let $V$ be a finite set, and $\mathcal{R} = \{R_1,\ldots,R_s\}$ be a partition of $V \times V$, such that each $R_i \in \mathcal{R}$ is a binary relation on $V$.  A \emph{coherent configuration} on $V$ is a pair $\mathcal{C} = (V,\mathcal{R})$ satisfying the following conditions:

\begin{enumerate}
 \item There exists a subset $\mathcal{R}_0$ of $\mathcal{R}$ partitioning the diagonal $\Delta(V)$ of the Cartesian product $V \times V$.
 \item $R_i \in \mathcal{R}$ if and only if its transpose $R^{T}_i$ is in $\mathcal{R}$.
 \item Given $R_i,R_j,R_k \in \mathcal{R}$, for any $(u,v) \in R_k$, the number of points $x \in V$ such that $(u,x) \in R_i$ and $(x,v) \in R_j$ is equal to $c^{k}_{i,j}$, independent of the choice of $(u,v) \in R_k$.
\end{enumerate}

The numbers $c^{k}_{i,j}$ are called the \emph{intersection numbers} of $\mathcal{C}$, and the elements of $V$ and $\mathcal{R}$ are called the \emph{points} and \emph{basis relations} of $\mathcal{C}$ respectively. Similar to adjacency matrices of graphs, a basis relation $R_i$ can be represented in matrix form by the \emph{basis matrix} $A(R_i)$, where:
 \begin{equation}
    A(R_i)_{x,y} = \left\{
    \begin{array}{rl}
     1 & \textrm{if } (x,y) \in R_i\\
     0 & \textrm{otherwise}.
    \end{array} \right.
 \end{equation}
Then the coherent configuration conditions above take the form:
\begin{enumerate}
 \item $\displaystyle\sum\limits_{i=1}^t S_i = \mathbf{1}_{|V|}$, the identity matrix, where $\mathcal{R}_0 = {S_1,\ldots,S_t}.$
 \item If $R_i \in \mathcal{R}$ there exists a relation $R_j \in \mathcal{R}$ such that $A(R_i)^T = A(R_j)$.
 \item For each $i,j \in [s], \;\, A(R_i) \, A(R_j) = \displaystyle\sum\limits_{k=1}^s c^{k}_{i,j} \, A(R_k)$.
\end{enumerate}
The algebra spanned by the $A(R_i)$ is called the \emph{adjacency algebra} or \emph{basis algebra} of the coherent configuration $\mathcal{R}$.

Consider the set of basis relations $\mathcal{R}_0 = \{S_1,\ldots,S_t\}$ such that $(x,y) \in S_i$ only if $x = y$. Note that by condition (1),
\begin{align}
 (x,x) \in R_i \textrm{ if and only if } u = v, \;\forall\; (u,v) \in R_i.
\end{align}
The $t$ sets $F_i \subset V$ such that $F_i = \{ x \in V : (x,x) \in S_i \}$ are called the \emph{fibres} of $\mathcal{C}$. By condition (1) they form a partition of $V$.

\subsection{Weak and Strong Isomorphisms}

Two coherent configurations $\mathcal{C} = (V,\mathcal{R})$ and $\mathcal{C}' = (V',\mathcal{R}')$ are \emph{isomorphic} (or strongly isomorphic) if there is a bijection mapping $V$ to $V'$, preserving the basis relations. The coherent configurations $\mathcal{C}$ and $\mathcal{C}'$ are \emph{similar} (or weakly isomorphic) if there exists a bijection $\varphi : \mathcal{R} \rightarrow \mathcal{R}'$, where $\varphi : R_i \mapsto R_{\varphi(i)}$, such that
\begin{align}
 c^{k}_{i,j} = c^{\varphi(k)}_{\varphi(i),\varphi(j)}, \quad \textrm{for all } i,j,k \in [s].
\end{align}
Clearly, all strong isomorphisms induce weak isomorphisms, however the converse does not hold.

\subsection{Coherent Configurations of Graphs}

The set of coherent configurations on $V$ forms a lattice with respect to inclusion \cite{Cameron03}. In particular, given a set of binary relations $\{S_1,\ldots,S_t\}$, where each $S_i \in V \times V$, denote by $[S_1,\ldots,S_t]$ the smallest coherent configuration $\mathcal{C} = (V,\mathcal{R})$ with the property:
\begin{align}
 &\textrm{For each } S_i, \; \exists \textrm{ a set } \{R_1,\ldots,R_x\} \subset \mathcal{R} \textrm{ such that } S_i = \bigcup_{j=1}^x R_j.
\end{align}
$[\mathcal{S}]$ is also termed the \emph{cellular closure} of the set $\mathcal{S}$ of binary relations.
We define the coherent configuration associated with an uncoloured graph $G$ to be $[G] := [V,E,(V \times V) \backslash E]$, the smallest coherent configuration in which the vertices, edges and non-edges are each partitioned by basis relations. Similarly, for an edge- and vertex-coloured graph $G$, consider the initial binary relations of $G$ to be the sets of vertices (and edges) of each colour, together with the set of non-edges, resulting in a corresponding definition for $[G]$.

In fact, the WL method was originally defined in \cite{WL68} as a way of calculating the adjacency matrix of the coherent configuration associated with a graph. Specifically, consider a coloured graph $G$, in which $c(v)$ denotes the colour of vertex $v \in V(G)$, and $c(u,v)$ the colour of edge $(u,v) \in E(G)$.
\begin{theorem}[\cite{WL68}]
 $G$ is the adjacency matrix of a coherent algebra if and only if $G$ is stable relative to the $1$-dim WL method, in that for all $u,v \in V$, $c(u) = c(v)$ only if $\textrm{WL}_1(u) = \textrm{WL}_1(v)$.
\end{theorem}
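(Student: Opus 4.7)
The plan is to prove the two directions of this equivalence by translating between the algebraic language of coherent configurations and the combinatorial language of stable colourings. Throughout, I would view the coloured graph $G$ as inducing an initial partition $\mathcal{R}_0$ of $V\times V$, where the diagonal cells come from vertex colour classes and the off-diagonal cells come from edge colour classes together with the non-edge relation, and let $\{A(R_i)\}$ be the corresponding 0/1 matrices.

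For the forward direction, assume $\{A(R_i)\}$ spans a coherent algebra, so conditions (1)--(3) hold. I would fix $u,v$ with $c(u)=c(v)$, which means $(u,u)$ and $(v,v)$ lie in the same diagonal basis relation $R_k \in \mathcal{R}_0$. For any colour class represented by a basis relation $R_i$, the number of $x$ with $(u,x)\in R_i$ is
\begin{equation*}
|\{x : (u,x)\in R_i\}| = \sum_j c^{k}_{i,j}\cdot [\,\text{$(x,u)\in R_j$ possible}\,]
\end{equation*}
which by axiom~(3) is a function of $k$ alone. Iterating, the multiset of neighbour colours (and of (edge-colour, neighbour-colour) pairs) about $u$ is a function of $c(u)$, which is exactly the 1-dim WL stability condition.

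For the reverse direction, assume the colouring is 1-dim WL stable. I would verify axioms (1)--(3) for $\mathcal{R}_0$. Axiom~(1) is immediate because the diagonal is partitioned by the vertex colour classes of the initial colouring. Axiom~(2) is immediate for the undirected case from the symmetric treatment of edge colours (in the directed case one enlarges $\mathcal{R}_0$ by including transposes, which is a harmless reindexing). The substantive content is axiom~(3): for every $R_i,R_j,R_k\in\mathcal{R}_0$ and every $(u,v)\in R_k$, the count $|\{x : (u,x)\in R_i,\, (x,v)\in R_j\}|$ depends only on $k$. I would derive this by noting that stability forces the refinement step --- which records the sorted multiset of pairs $(\omega(u,x),\,c(x))$ over $x\in V$ --- to preserve the current partition. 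Hence $u,v$ in the same colour class share the same such multiset, and intersecting with the constraint $(x,v)\in R_j$ reduces this to a purely local count that is constant on $R_k$.

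The main obstacle will be pushing the vertex-level stability statement up to the pair-level intersection-number statement in axiom~(3); naively, WL stability only tells us about the neighbourhood of a single vertex. I would address this by exploiting that the initial partition already carries edge colours, so 1-dim WL stability in the sense intended here is really stability of a refinement that reads off the colour of the pair $(u,x)$ together with the colour of $x$. Once that is recognised, axiom~(3) becomes the statement that a fixed neighbourhood-count is constant across a colour class, which is exactly what stability delivers. A subsidiary technical point is to confirm that the $\mathcal{R}_0$ thus obtained is closed under the algebra operation --- equivalently, that no further basis relation is implicitly forced --- which again follows from stability via the minimality (cellular closure) definition recalled in the section.
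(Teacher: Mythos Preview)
The paper does not supply a proof of this theorem; it is quoted from \cite{WL68} as background. So there is no in-paper argument to compare against, and your proposal must stand on its own.

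Your forward direction is essentially fine. The reverse direction has a genuine gap which you yourself flag as the ``main obstacle'' but do not actually close. One-dimensional WL stability, even in the edge-colour-aware form the paper later writes out, says only that for each vertex $u$ the multiset $\{(\omega(u,x),c(x)):x\in V\}$ depends solely on $c(u)$. Axiom~(3) of a coherent configuration is a genuinely \emph{two-variable} condition: the count $|\{x:(u,x)\in R_i,\ (x,v)\in R_j\}|$ must be constant over all pairs $(u,v)$ in a given \emph{pair} colour class $R_k$. Your sentence ``intersecting with the constraint $(x,v)\in R_j$ reduces this to a purely local count that is constant on $R_k$'' is precisely the step that needs an argument, and it does not follow from the one-variable stability you have established. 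Concretely, the triangular prism $K_3\,\square\,K_2$ with one vertex colour, one edge colour, and one non-edge colour is $3$-regular and hence stable under $1$-dim WL, yet adjacent pairs have either $0$ or $1$ common neighbours depending on whether the edge is a triangle edge or a matching edge; axiom~(3) fails and the colouring is not coherent.

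The classical Weisfeiler--Lehman result behind this citation is really about \emph{pair} refinement --- what this paper elsewhere calls the $2$-dim WL method --- where a colouring of $V\times V$ is stable if and only if it forms a coherent configuration. Read against the paper's own definition of the $1$-dim WL method (vertex refinement), the reverse implication is simply false, so no argument for that direction can be completed without tacitly reinterpreting ``$1$-dim WL'' as refinement of pair colours.
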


Analogous to the conversion of $k$-tuple colourings to 1-tuple colourings described in Chapter \ref{sec:WLprops}, the set of basis relations of a coherent configuration $\mathcal{C} = (V,\mathcal{R})$ induce a colouring of the 1-tuples of $V$, corresponding exactly to the subset $\mathcal{R}_0$ of $\mathcal{R}$ that partitions $\Delta(V)$. Denote this colouring of 1-tuples by $\overline{\mathcal{C}}$, the \emph{closure} of $\mathcal{C}$.

A set $\mathcal{R}$ of binary relations on $V$ is termed \emph{1-closed} if $[\mathcal{R}] = (V,\mathcal{R})$. Similarly a graph is termed \emph{1-closed} if it is stable with respect to the $1$-dim WL method - if the adjacency matrix of the graph correspond to that of a coherent configuration. Strongly regular graphs are trivially 1-closed, as their sets of vertices, edges and non-edges satisfy all conditions of a coherent configuration (equivalently, their vertex sets are not refined by the $1$-dim WL method).

\subsection{$m$-Extended Coherent Configurations}

Given a scheme $\mathcal{C} = (V, \mathcal{R})$, let $\mathcal{C}^{m} = \mathcal{C} \otimes \ldots \otimes \mathcal{C}$ denote the $m$-fold tensor product of $\mathcal{C}$, and $\Delta_m$ denote the diagonal of $V^{m} = V \times \ldots \times V$. Then the $m$-\emph{extension} of $\mathcal{C}$ is defined as:
\begin{align}
 \widehat{\mathcal{C}}^{(m)} = [\mathcal{C}^m,\Delta_m].
\end{align}
An isomorphism $\varphi : \mathcal{C}^{(m)} \rightarrow (\mathcal{C}^{(m)})'$ is termed an $m$-isomorphism mapping $\mathcal{C}$ to $\mathcal{C}'$. A similarity (weak isomorphism) from $\mathcal{C}^{(m)}$ to $(\mathcal{C}^{(m)})'$ is termed an $m$-similarity mapping $\mathcal{C}$ to $\mathcal{C}'$.

$\overline{\mathcal{C}^{(m)}}$ denotes the colouring of 1-tuples of $V$ associated with $\mathcal{C}^{(m)}$, termed the \emph{m-closure} of $\mathcal{C}$. A coherent configuration $\mathcal{C}$ (resp. a set of basis relations $\mathcal{R}$) is termed \emph{m-closed} if $\overline{\mathcal{C}} = \overline{\mathcal{C}^{(m)}}$ (resp. if $\overline{[\mathcal{R}]} = \overline{[\mathcal{R}]^{(m)}}$). Similarly a graph is $m$-closed if it is stable with respect to the $m$-dim WL method.

\begin{theorem}[\cite{Evdokimov99}]
Denote the orbit partition of a graph $G$ by $\mathcal{P}$. Then for some $n$,
\begin{align}
 \overline{[G]} \le \overline{[G]^{(2)}} \le \ldots \le \overline{[G]^{(n)}} = \ldots = \mathcal{P}.
\end{align}
\end{theorem}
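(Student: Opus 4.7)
Plan. The statement has three parts to verify: (a) the partitions $\overline{[G]^{(m)}}$ form a refining chain as $m$ grows; (b) every term is refined by the orbit partition $\mathcal{P}$; and (c) the chain stabilises at $\mathcal{P}$ in finitely many steps. I would dispatch (a) and (b) as warm-ups and focus effort on (c).

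For (a), I would argue that the $(m{+}1)$-extension refines the $m$-extension at the level of induced 1-colourings. Concretely, consider the diagonal-lift $V^m \to V^{m+1}$ sending $(v_1,\ldots,v_m)$ to $(v_1,\ldots,v_m,v_m)$; because the condition $v_{m+1}=v_m$ is captured by the cellular closure on $V^{m+1}$ (as a union of basis relations lying above $\Delta_{m+1}$), the basis relation of $\mathcal{C}^{(m+1)}$ containing $(v_1,\ldots,v_m,v_m)$ determines the basis relation of $\mathcal{C}^{(m)}$ containing $(v_1,\ldots,v_m)$. Restricting to the full diagonal gives $\overline{[G]^{(m)}} \le \overline{[G]^{(m+1)}}$.

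For (b), any $\sigma \in \mathrm{Aut}(G)$ acts diagonally on $V^m$ and preserves both the basis relations of $[G]^m$ and the diagonal $\Delta_m$. Hence $\sigma$ is an automorphism of the cellular closure $[G]^{(m)} = [[G]^m,\Delta_m]$, preserves its basis relations, and therefore preserves the induced 1-colouring of $V$. Thus vertices in a common $\mathrm{Aut}(G)$-orbit always share a colour, yielding $\overline{[G]^{(m)}} \le \mathcal{P}$ for every $m$.

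Part (c) splits in two. Stabilisation at some $n$ is immediate from (a) together with the finiteness of the partition lattice on $V$. The harder sub-claim, and the main obstacle, is that the stable partition is \emph{equal} to $\mathcal{P}$ rather than some strict coarsening. My plan is to show this by taking $m \ge |V|$ and arguing that at this dimension $[G]^{(m)}$ on $V^m$ coincides with the ``Schur scheme'' of $\mathrm{Aut}(G)$ acting diagonally on $V^m$, whose basis relations are exactly the orbits and whose diagonal restriction is $\mathcal{P}$. The intuition is that an $m$-tuple can now enumerate all vertices of $G$, so a basis relation of $\mathcal{C}^{(m)}$ containing such an enumeration cannot further mix distinct isomorphism types of rooted/labelled graphs; combined with (b), this forces equality. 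Formalising the passage from combinatorial regularity of the intersection numbers $c_{i,j}^k$ on enumerating tuples to the existence of actual graph automorphisms of $G$ (and not merely coherence-preserving bijections on $V^m$) is the technical heart of the theorem, and is where I expect the real work to sit.
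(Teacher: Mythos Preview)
The paper does not give its own proof of this theorem: it is quoted verbatim from \cite{Evdokimov99} as background in the chapter on coherent configurations, with no argument supplied. So there is nothing in the paper to compare your proposal against.

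On the merits, your outline is sound. Parts (a) and (b) are the standard arguments and are correctly sketched. For (c), stabilisation is immediate, and your instinct to take $m=|V(G)|$ is the right one. Two remarks. First, you do not need the strong claim that $[G]^{(m)}$ coincides with the full Schurian scheme of $\mathrm{Aut}(G)$ on $V^m$; you only need that its \emph{fibres} on the diagonal are the $\mathrm{Aut}(G)$-orbits, which is weaker and easier. Second, the passage you flag as the ``technical heart'' is less delicate than you suggest once phrased via the $m$-dim WL colouring (which the paper identifies with the $m$-closure): for $m=|V|$, the initial colour $\mathrm{iso}(v_1,\ldots,v_m)$ of a \emph{surjective} $m$-tuple already records the entire adjacency matrix of $G$ under the labelling $i\mapsto v_i$, so two surjective $m$-tuples share an initial colour iff $v_i\mapsto w_i$ is an automorphism. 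Passing to the induced $1$-tuple colour via $\mathrm{WL}_m(v)=\langle\,\mathrm{Sort}\{\mathrm{WL}_m(v,i_2,\ldots,i_m):i_j\in V\}\,\rangle$ and restricting attention to the surjective tuples (which are distinguished from the non-surjective ones by their equality pattern) then gives that $v$ and $w$ receive the same colour iff some automorphism sends $v$ to $w$. That closes the gap without needing to control non-diagonal basis relations of $[G]^{(m)}$.
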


In \cite{Evdokimov99}, pairs of non-isomorphic $k$-similar coherent configurations are constructed for all $k$. These coherent configurations are related to cubic graphs with minimum separator size of $3k+1$ or larger, similar to the case for the counterexample graphs of \cite{CFI}.

\subsection{Examples of non-isomorphic $k$-similar coherent configurations}

Here we will give a brief description of the $k$-similar, non-isomorphic coherent configurations constructed in \cite{Evdokimov99}, corresponding closely to the definition given in \cite{Barghi09}. Associated with these will be pairs of $k$-equivalent (edge-coloured) graphs which will be analysed together with the $k$-equivalent graphs of \cite{CFI} in depth in later sections.

Let $G$ by a cubic graph on $s$ points. Define a coherent configuration $\mathcal{C} = (V,\mathcal{R})$ on $4s$ points in the following way.
Denote the vertex set of $G$ by $I = \{1,\ldots,s\}$, and associate with each $i \in I$ a fibre $V_i$ of size 4 in $\mathcal{C}$, such that $\mathcal{C}$ has exactly $s$ fibres, each of size 4. For each $V_i = \{0,1,2,3\}$, let $\mathcal{C}_{V_i}$ be the coherent configuration on 4 points with the three non-reflexive basis relations:
\begin{align}
 &E_1 = \{(0,1),(1,0),(2,3),(3,2)\}, \; E_2 = \{(0,2),(2,0),(1,3),(3,1)\} \textrm{ and} \notag\\
 &E_3 = \{(0,3),(3,0),(2,1),(1,2)\}.
\end{align}
As $V_i$ is a fibre of $\mathcal{C}$, $\mathcal{R}_{i,i}$ contains 4 basis relations, where 
\begin{align}
 \mathcal{R}_{i,j} = \{R \in \mathcal{R} : R \subset V_i \times V_j\}.
\end{align}
For $i \neq j$, let
\begin{equation}
\label{eq:Rij}
 |\mathcal{R}_{i,j}| = \left\{
 \begin{array}{rl}
  2 & \textrm{if } \{i,j\} \in E(G)\\
  1 & \textrm{otherwise}.
 \end{array} \right.
\end{equation}
In the cases where $\{i,j\} \in E(G)$, define $\mathcal{R}_{i,j}$ as follows.

\noindent Assign to each $v \in d(i)$ the numbers $c(i,v) \in \{1,2,3\}$ with the property:
\begin{align}
 &u,v \in d(i) \;\textrm{ such that }\; u \neq v \;\, \textrm{ only if } \;\, c(i,u) \neq c(i,v).
\end{align}
$\mathcal{R}_{i,j}$ consists of two distinct relations, labelled $R_{1,2}$, with the di-edge $(i,j)$ relative to which they are defined left implicit. These relations $R_1, R_2 \in \mathcal{R}_{i,j}$, $\{i,j\} \in E(G)$ are defined as:
\begin{align}
\label{eq:R12}
 &R_1 = \langle c(i,j) \rangle \times \langle c(j,i) \rangle \cup \overline{\langle c(i,j) \rangle} \times \overline{\langle c(j,i) \rangle},\\
 &R_2 = (V_i \times V_j) \backslash R_1,
\end{align}
where $\langle c(i,j) \rangle = \{0,c(i,j)\} \subset V_i$ and $\overline{\langle c(i,j) \rangle} = V_i \backslash \langle c(i,j) \rangle$.

For any cubic graph $G$ the coherent configuration $\mathcal{C} = (V, \mathcal{R})$ described above is called a \emph{Klein scheme} associated with $G$. Further, for each $i \in I$, consider the mapping $\psi_i : \mathcal{R} \rightarrow \mathcal{R}$, where
\begin{equation}
 \psi_i(R) = \left\{
 \begin{array}{rl}
  (V_i \times V_j) \backslash R & \textrm{if } R \in \mathcal{R}_{i,j}, \textrm{ and } j \in d(i)\\
  (V_j \times V_i) \backslash R & \textrm{if } R \in \mathcal{R}_{j,i}, \textrm{ and } j \in d(i)\\
  R & \textrm{otherwise}.
 \end{array} \right.
\end{equation}

\begin{theorem}[\cite{Evdokimov99, Barghi09}]
\label{thm:Klein}
 $\psi_i$ is an involutory weak isomorphism from $\mathcal{C}$ to $\mathcal{C}$ not inducing a strong isomorphism. Further, if $G$ has minimum separator size $l > 3k$, $\psi_i(\mathcal{C})$ are $\mathcal{C}$ are $k$-similar.
\end{theorem}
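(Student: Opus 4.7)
My plan is to separate the theorem into two local assertions (involution, weak but not strong isomorphism) and one global assertion ($k$-similarity), and treat them in that order.

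For the local assertions, the involution property is immediate: if $R \in \mathcal{R}_{i,j}$ with $j \in d(i)$, then $\psi_i \circ \psi_i(R) = (V_i \times V_j) \setminus ((V_i \times V_j) \setminus R) = R$, and on all unaffected basis relations $\psi_i$ is the identity by definition. To verify that $\psi_i$ preserves intersection numbers, I would use the fact that $c^{c}_{a,b}$ is computed at a uniquely determined "middle" fibre, so each verification involves at most three fibres at once. In every case where $\psi_i$ acts nontrivially on a triple $(R_a, R_b, R_c)$, the required coordinated swap of basis relations can be realised by a genuine permutation $\sigma$ of $V_i$ drawn from the Klein four-group $K \cong \mathbb{Z}_2 \times \mathbb{Z}_2$ of automorphisms of $(V_i; E_1, E_2, E_3)$: each non-identity element of $K$ swaps exactly two of the three sets $\langle 1 \rangle, \langle 2 \rangle, \langle 3 \rangle$ with their complements, which is enough to handle any sub-configuration touched by a single intersection number (which involves at most two edges incident to $i$). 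That $\psi_i$ is \emph{not} induced by a strong isomorphism then follows by contradiction: any such bijection $f$ must preserve every fibre setwise, so $f|_{V_i} \in K$, and $\psi_i$ forces $f|_{V_i}$ to swap all three sets $\langle c(i,j) \rangle$ (for $j \in d(i)$) with their complements simultaneously, which no element of $K$ does.

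The $k$-similarity statement under the hypothesis $l > 3k$ is the substantive part, and I would approach it in the style of the CFI argument. The $k$-dim WL classification of $\mathcal{C}$ depends only on the colour classes of $k$-tuples of points in $V$; each such tuple touches at most $k$ fibres, hence at most $k$ vertices $S \subseteq V(G)$. Since $|S| \le k < l$, $S$ is not a separator of $G$, so $G \setminus S$ still contains a path from $i$ to any fixed "sink" vertex outside $S$. The key algebraic ingredient is that the composition $\psi_{j'} \circ \psi_j$ for an edge $\{j,j'\} \in E(G)$ effectively transports a twist across that edge, without affecting relations supported on fibres other than $V_j$ and $V_{j'}$. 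Composing such involutions along a path in $G \setminus S$ from $i$ to the sink pushes the twist originally at $i$ entirely out of the fibres indexed by $S$; the resulting composition is a weak isomorphism from $\mathcal{C}$ to $\psi_i(\mathcal{C})$ acting trivially on the $k$-tuple under consideration, so the $k$-dim WL colours of $\mathcal{C}$ and $\psi_i(\mathcal{C})$ on that tuple agree. Ranging over all $k$-tuples yields the desired $k$-similarity; the factor $3$ in $3k$ reflects the three incident edges at each traversed cubic vertex, across which the accumulated twist must be bookkept without spilling into fibres indexed by $S$.

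The main obstacle is this last step: making the \emph{propagation of the twist} precise, and verifying that the cumulative weak isomorphism really identifies the $k$-extended configurations on all $k$-tuples when the sink lies outside $S$. This is analogous in spirit to the pebble-game argument of \cite{CFI}, and most of the work lies in organising the combinatorial bookkeeping of which basis relations are swapped at each intermediate stage and in confirming that the $l > 3k$ bound is precisely what is needed to absorb the propagation. The local checks in the first paragraph, by contrast, reduce to finite computations in the Klein four-group.
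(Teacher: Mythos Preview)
The paper does not supply its own proof of this theorem: it is stated with the attribution \cite{Evdokimov99, Barghi09} and immediately followed by the next subsection, with no proof environment. There is therefore nothing in the paper to compare your proposal against; the author is simply quoting a known result from the literature.

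That said, your sketch is broadly reasonable. The local assertions (involution, weak isomorphism, failure to be strong) are handled correctly, and the Klein four-group observation --- that every non-identity element swaps exactly two of the three cosets $\langle 1\rangle,\langle 2\rangle,\langle 3\rangle$ with their complements, so no single bijection of $V_i$ realises all three swaps demanded by $\psi_i$ --- is exactly the right mechanism. For the $k$-similarity clause, your CFI-style ``push the twist along a path avoiding the $k$ occupied fibres'' outline is the natural combinatorial picture; the proof in \cite{Evdokimov99} is phrased more algebraically in terms of $m$-extended coherent configurations, but the underlying idea is the same separator argument. One point to tighten: the bound $l>3k$ in \cite{Evdokimov99} arises not from ``three incident edges at each traversed vertex'' but from the fact that each point of the $k$-tuple lives in a $4$-point fibre and fixing it constrains at most three of the basis relations touching that fibre; you would need to make the bookkeeping here precise rather than gesture at it.
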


\subsection{The Associated Graph}
Given a Klein scheme $\mathcal{C} = (V,\mathcal{R})$ associated with some cubic graph $G$, we can define an edge-coloured di-graph $K(G)$ associated in turn with $\mathcal{C}$, in the following manner\footnote{Note that this graph is slightly different to that obtained by a direct conversion of $\mathcal{C}$, in that the relations $\mathcal{R_{i,j}}$, where $\{i,j\} \notin E(G)$ are converted to non-edges.}. 

Let $V = \{V_1,\ldots,V_i\}$ as above, and let $V(K(G)) = V$ and $E(K(G))$ be denoted by $E$. Denote the colour of the di-edge $(x,y) \in E$ by $C(x,y)$. Then given $x \in V_i, y \in V_j$,
\begin{align}
 &(x,y) \notin E \;\textrm{ if and only if }\; i \neq j \;\textrm{ and }\; \{i,j\} \notin E(G).
\end{align}
Further, denote the colour of the di-edge $(x,y) \in E$ by $C(x,y)$. Then
\begin{align}
 &C(u,v) = C(x,y) \textrm{ if and only if } (u,v),(x,y) \in R, \textrm{ for some } R \in \mathcal{R}.
\end{align}
Hence di-edges are assigned the same colour only in the case where they belong to the same basis relation of $\mathcal{C}$.

Note that the colours of $K(G)$ are not considered to possess absolute values in the sense of those of $\textrm{WL}_k(G)$, but rather relative values.
Let $K'(G)$ be defined similarly with respect to $\psi_i(\mathcal{C})$, for any $i \in V(G)$. Then $K'(G) \ncong K(G)$, and the following corollary of Theorem \ref{thm:Klein} holds.
\begin{corollary}
 If $G$ has no separators of size $3k$, then $K'(G)$, $K(G)$ are non-isomorphic, $k$-similar graphs.
\end{corollary}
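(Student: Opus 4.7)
The plan is to deduce both non-isomorphism and $k$-similarity from Theorem \ref{thm:Klein} by translating the statements about the Klein scheme $\mathcal{C}$ and its image $\psi_i(\mathcal{C})$ under a weak isomorphism into statements about their associated edge-coloured graphs $K(G)$ and $K'(G)$. The key observation underlying the whole argument is the standard dictionary between coherent configurations and the WL method (as recalled earlier in Section \ref{sec:schemes}): for a graph whose edge-colouring partitions $V \times V$ exactly into the basis relations of a coherent configuration $\mathcal{C}$, the stable $k$-dim WL partition of $k$-tuples coincides with the partition induced by the basis relations of the $k$-extension $\widehat{\mathcal{C}}^{(k)}$.

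For the non-isomorphism part, I would argue by contradiction. Suppose $\varphi\colon K(G) \to K'(G)$ is a colour-preserving isomorphism of edge-coloured graphs. Since the colour classes of $K(G)$ are precisely the basis relations of $\mathcal{C}$ restricted to edges, and those of $K'(G)$ are the basis relations of $\psi_i(\mathcal{C})$, the map $\varphi$ would send each basis relation $R$ of $\mathcal{C}$ (on the edge side) to $\psi_i(R)$, and moreover extend to a vertex bijection preserving the identification of colours. Together with the fact that the fibres and the reflexive relations $\mathcal{R}_0$ are fixed by $\psi_i$, this would upgrade $\varphi$ to a strong isomorphism realising $\psi_i\colon \mathcal{C} \to \mathcal{C}$, contradicting the assertion in Theorem \ref{thm:Klein} that $\psi_i$ is a weak isomorphism which does \emph{not} induce a strong one. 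Hence no such $\varphi$ exists.

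For the $k$-similarity part, I would apply the correspondence between the WL method and coherent configurations in both directions. By Theorem \ref{thm:Klein}, under the hypothesis that $G$ has no separator of size at most $3k$, the $k$-extensions $\widehat{\mathcal{C}}^{(k)}$ and $\widehat{\psi_i(\mathcal{C})}^{(k)}$ are weakly isomorphic: there is a bijection $\varphi^{(k)}$ between their basis relations preserving all intersection numbers. Translating this back through the dictionary, the iterated WL refinement on $K(G)$ and $K'(G)$ produces multisets of colours of $k$-tuples that are identical up to relabelling at every step, because the updates in (\ref{eq:WL_k}) depend only on intersection numbers of the corresponding basis relations. Thus $\textrm{WL}_k(K(G)) = \textrm{WL}_k(K'(G))$, i.e., $K(G) \sim K'(G)$ in the $k$-dim WL sense.

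The main obstacle I anticipate is the bookkeeping in the first part: one must be careful that a graph isomorphism of the edge-coloured digraphs really does induce a strong isomorphism of the full coherent configurations, which means checking that the reflexive relations $\mathcal{R}_0$ (the fibres), the relations $\mathcal{R}_{i,j}$ with $\{i,j\} \notin E(G)$ (which do not appear as edges in $K(G)$ but do appear as basis relations of $\mathcal{C}$), and the asymmetry between $R$ and $R^T$ are all respected. This requires examining the explicit construction (\ref{eq:Rij})--(\ref{eq:R12}) closely to verify that the combinatorial data lost in passing from $\mathcal{C}$ to $K(G)$ can still be reconstructed from the edge-colouring and vertex incidences; once this is established, the remainder of the proof is a direct application of Theorem \ref{thm:Klein} and the WL/coherent-configuration dictionary.
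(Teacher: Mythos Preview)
Your proposal takes essentially the same route as the paper, which gives no separate proof: the corollary is stated as an immediate consequence of Theorem~\ref{thm:Klein} together with the already-asserted fact $K'(G)\ncong K(G)$ in the sentence preceding it. Spelling out the dictionary between coherent configurations and the WL method, as you do, is exactly the intended justification.

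There is, however, one genuine gap in your non-isomorphism argument. You write that a putative isomorphism $\varphi$ ``would send each basis relation $R$ of $\mathcal{C}$ to $\psi_i(R)$''. This is not automatic. If the edge colours are taken as \emph{relative} (as the paper explicitly states just before the corollary), then an isomorphism $\varphi\colon K(G)\to K'(G)$ yields a bijection $\sigma\colon V\to V$ together with \emph{some} colour bijection $\rho$ on relations with $\sigma(R)=\rho(R)$; it need not be the case that $\rho=\psi_i$. Theorem~\ref{thm:Klein} only asserts that the specific weak isomorphism $\psi_i$ is not induced by a strong one, so your contradiction does not close as stated. To repair this you must either (a) argue that the labelling of colours in $K'(G)$ is by construction tied to $\psi_i$, so that a colour-preserving isomorphism is forced to realise $\psi_i$ on relations; or (b) invoke the stronger statement from \cite{Evdokimov99} (implicit in the paper's bald assertion $K'(G)\ncong K(G)$) that $\mathcal{C}$ and $\psi_i(\mathcal{C})$ are not strongly isomorphic under \emph{any} relation bijection. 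Route (a) is what the construction actually gives once you trace through (\ref{eq:Rij})--(\ref{eq:R12}) and the definition of $K'(G)$ relative to $\psi_i(\mathcal{C})$, but you should make this explicit rather than leaving it among the ``bookkeeping'' you flag at the end.

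Your $k$-similarity argument is fine and matches the paper's intent.
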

And hence,
\begin{corollary}
 If $G$ has no separators of size $3k$, $\textrm{WL}_k(K'(G)) = \textrm{WL}_k(K(G))$. Hence if $G$ additionally \emph{has} separators of size $3(k+1)$, then $K(G),K'(G)$ are $k$-equivalent.
\end{corollary}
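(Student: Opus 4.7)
The first assertion is essentially a direct unpacking of definitions. The preceding corollary already states that, under the hypothesis that $G$ has no separators of size $3k$, the graphs $K(G)$ and $K'(G)$ are $k$-similar. By the definition of $k$-similarity established earlier (two graphs are $k$-similar iff their $k$-dim WL colourings agree), this immediately yields $\textrm{WL}_k(K(G)) = \textrm{WL}_k(K'(G))$. My plan for this part is simply to write a one-line justification that makes explicit the correspondence between weak isomorphism of the $k$-extended Klein schemes $\widehat{\mathcal C}^{(k)}$ and $\widehat{\psi_i(\mathcal C)}^{(k)}$ and the equality of the stable $k$-dim WL colourings on the associated edge-coloured graphs.

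For the second assertion, the definition of $k$-equivalence has two ingredients to verify: existence of a non-isomorphic partner at the WL$_k$ level, and maximality of $k$. The first is essentially free: the previous corollary gives $K(G) \ncong K'(G)$, and together with the first sentence this provides the required partner. So my plan is to isolate the remaining content, which is the maximality condition $\textrm{WL}_{k+1}(K(G)) \neq \textrm{WL}_{k+1}(K'(G))$, and argue this from the separator hypothesis.

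For maximality, I would invoke the converse direction underlying Theorem \ref{thm:Klein}, i.e.\ the tightness of the $3k$ bound that is part of the Evdokimov--Ponomarenko construction. The intuition I would make rigorous is the following: if $G$ possesses a separator $S$ of size $3(k+1)$, then after the $(k+1)$-dim WL method fixes a $(k+1)$-tuple of vertices hitting the fibres $V_i$ lying above $S$, the twist $\psi_i$ that distinguishes $\mathcal{C}$ from $\psi_i(\mathcal{C})$ can be "routed" through the pinned vertices and produces a detectable asymmetry in the resulting refined colouring. Concretely, I would stabilise such a $(k+1)$-tuple and compute the induced $1$-dim WL partition on each side of the separator, then show that the twist contributes a parity that survives the refinement on $K'(G)$ but not on $K(G)$. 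The main obstacle, and the reason this step is the substantive core of the corollary, is that this tightness statement is not proved from the material presented in the excerpt; it is the converse to Theorem \ref{thm:Klein} and relies on constructions in \cite{Evdokimov99}. Once the maximality inequality is in hand, combining it with the first sentence and the non-isomorphism $K(G) \ncong K'(G)$ gives the claimed $k$-equivalence.
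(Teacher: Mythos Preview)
The paper states this corollary without proof, treating it as an immediate consequence of Theorem~\ref{thm:Klein} and the cited results of \cite{Evdokimov99}. Your handling of the first sentence is fine and matches that spirit: $k$-similarity of $K(G)$ and $K'(G)$ is exactly the equality $\textrm{WL}_k(K(G)) = \textrm{WL}_k(K'(G))$ by definition.

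There is, however, a genuine gap in your treatment of the second sentence. You write that ``the remaining content \ldots\ is the maximality condition $\textrm{WL}_{k+1}(K(G)) \neq \textrm{WL}_{k+1}(K'(G))$,'' but this is not what the paper's definition of $k$-equivalence requires. For $K(G)$ to be $k$-equivalent, $k$ must be the \emph{largest} integer for which a non-isomorphic WL$_k$-indistinguishable partner exists; that is, there must be \emph{no} graph $H \ncong K(G)$ with $\textrm{WL}_{k+1}(K(G)) = \textrm{WL}_{k+1}(H)$. Establishing $\textrm{WL}_{k+1}(K(G)) \neq \textrm{WL}_{k+1}(K'(G))$ only rules out $K'(G)$ as such a partner, not every possible $H$. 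Your separator-pinning sketch, even if made rigorous via the tightness in \cite{Evdokimov99}, addresses only this weaker statement.

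In fairness, the corollary as stated in the paper likely carries this same looseness: the separator hypothesis on $G$ controls the relationship between $K(G)$ and $K'(G)$, but says nothing about arbitrary $(k+1)$-similar graphs. The intended reading is almost certainly the informal one --- that WL$_k$ fails to separate the pair while WL$_{k+1}$ succeeds --- rather than the literal maximality clause of the definition. If you want a correct proof of the statement as literally written, you would additionally need to show that $K(G)$ is non-$(k+1)$-equivalent in the sense of the paper, which is a substantially stronger claim and is not supplied by the separator bound alone.
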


This second family of $k$-equivalent graphs has several similarities to those of \cite{CFI}. In particular, in both cases the differences between non-isomorphic pairs can be `shifted' around the graph; in the case of CFI$(G)$ and CFI$'(G)$ this involves `twisting' an even number of $(a,b)$ edge pairs as described in \cite{Furer87}; in the case of $K(G)$ and $K'(G)$ this involves applying the $\psi_i$ transformation to an even number of fibres $V_i$\footnote{Note that if $\psi$ is applied to $0$ (mod 2) fibres of $K(G)$ then it preserves the isomorphism class of $K(G)$.}. The basis relations $R_1, R_2 \in \mathcal{R}_{i,j}$, for $\{i,j\} \in E(G)$, are in this way analogous to the $(a,b)$ pairs connecting the gadgets $\textrm{CFI}(i)$ to $\textrm{CFI}(j)$ in the graph CFI($G$). In both cases the key property that the graphs possess is that the separator size of the original $G$ is proportional to the size of the $k$-tuples required to distinguish the non-isomorphic pairs.

\setcounter{equation}{0}
\section{Graph Extensions and the $k$-dim WL Method}
\label{sec:ext}

The purpose of this chapter is to analyse the relative properties of $\textrm{WL}_k(G)$ and $\textrm{WL}_k(G')$, where $G'$ is an extension of the graph $G$ resulting from replacing each of the vertices of $G$ by some gadget, then connecting the gadgets according to a certain set of rules. This analysis is motivated by the form of the known families of counterexample graphs, each involving extensions of this kind applied to expander graphs.

In particular, the following theorems will be proven.

\begin{theorem}
\label{thm:cfi(x)}
 If the recursive $k$-dim WL method succeeds in characterising the graph $G$, then the recursive $(k+1)$-dim WL method succeeds in characterising the graph CFI($G$).
\end{theorem}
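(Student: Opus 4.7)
The approach is to show that the recursive $(k+1)$-dim WL method on CFI($G$) ``simulates'' the recursive $k$-dim WL method on $G$, using the extra dimension to resolve the internal structure of the CFI gadgets (in particular the $a_i\!\leftrightarrow\!b_i$ twist symmetry). Throughout, I exploit the natural projection $\pi:V(\mathrm{CFI}(G))\to V(G)$ sending each vertex of a gadget $\mathrm{CFI}(v)$ to $v$, extended coordinate-wise to tuples.

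\textbf{Stage 1 (Simulation lemma).} First I would prove the key technical statement: the colouring induced on $V(G)$ by the $(k+1)$-dim WL colouring of $V(\mathrm{CFI}(G))^{k+1}$ via $\pi$ is at least as fine as $\mathrm{WL}_k(G)$. The reason one extra dimension is enough is that, for $v\in V(G)$ of valency $d\le k$, the gadget $\mathrm{CFI}(v)$ has middle vertices $m_S$ fully characterised by their adjacency pattern to $A(v)\cup B(v)$, and a single $(k+1)$-tuple can witness the entire internal combinatorial type of a gadget. I would formalise this by induction on the WL refinement step $t$: assuming that the projected $(k+1)$-dim colouring at step $t$ refines $\mathrm{WL}^t_k$ on $G$, show that the neighbour multisets computed in CFI($G$) encode at least the corresponding neighbour multisets in $G$, using the fact that the gadget-to-gadget couplings $\{a_{u,v},b_{u,v}\}$--$\{a_{v,u},b_{v,u}\}$ respect the edge structure of $G$.

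\textbf{Stage 2 (Recursive individualisation).} The recursive $k$-dim WL characterising $G$ means there is a sequence of individualisations $v_1,v_2,\ldots$ of vertices of $G$, each followed by WL refinement, whose end-product is the orbit partition of $V(G)$. I would mirror this in CFI($G$): for each $v_i$, individualise a single vertex of $\mathrm{CFI}(v_i)$ (say an element of $A(v_i)$). By Stage 1, after each such individualisation the projected colouring on $V(G)$ is at least as fine as the one produced by the corresponding step of the recursive $k$-dim WL on $G$. Hence, after the full sequence, $\pi$ induces a colouring of $V(G)$ that separates its orbits.

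\textbf{Stage 3 (Resolving the twist).} It remains to show that, once the orbits of $G$ are separated through $\pi$, the $(k+1)$-dim WL colouring on CFI($G$) also distinguishes vertices \emph{within} each gadget up to the orbits of $\mathrm{Aut}(\mathrm{CFI}(G))$. Individualising a single $a$-vertex anywhere in CFI($G$) breaks the $a\!\leftrightarrow\!b$ symmetry on that gadget; because adjacent gadgets are rigidly coupled through their $(a,b)$-pairs, this choice propagates along any path of $G$ to fix the $a/b$ labelling on every gadget. Since $G$ is connected (one can restrict to connected components without loss of generality), a constant number of additional individualisations, each visible to the $(k+1)$-dim WL, suffice to collapse the $2^{k-1}$-fold automorphism group of each gadget and thus partition $V(\mathrm{CFI}(G))$ into its orbits.

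\textbf{Main obstacle.} The principal difficulty lies in Stage 1. The projection $\pi$ is many-to-one, so a single $(k+1)$-tuple in CFI($G$) can project to many $k$-tuples in $G$ (by, e.g., ignoring one coordinate and collapsing coincident gadget memberships), and one must ensure that the colour class of a $(k+1)$-tuple genuinely determines (at least) the colour class of every $k$-tuple obtainable by projection-plus-deletion of one coordinate. This requires a careful bookkeeping argument that isolates, inside every $(k+1)$-tuple, a distinguished coordinate that plays the role of the ``extra'' dimension used to read off gadget-internal information, while the remaining $k$ coordinates, after projection, encode the $k$-dim WL colouring inherited from $G$. Establishing this cleanly -- and showing the induction closes under one step of WL refinement -- is the heart of the proof.
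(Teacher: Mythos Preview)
Your proposal has a genuine gap: you have misidentified what the extra dimension is for. The internal structure of each gadget $\mathrm{CFI}(v)$ is essentially trivial from the orbit point of view --- all middle vertices $m_S\in M(v)$ lie in a single orbit, and for each edge $\{u,v\}$ the pair $\{a_{u,v},b_{u,v}\}$ is a single orbit of $\mathrm{Aut}(\mathrm{CFI}(G))$. So there is nothing to ``resolve'' inside a gadget, and your Stage~1 remark about vertices of valency $d\le k$ is a red herring (no such bound is assumed; indeed $G$ is typically an expander). The real issue is that the orbit of $a_{u,v}$ (equivalently, of the pair $\{a_{u,v},b_{u,v}\}$) in $\mathrm{Aut}(\mathrm{CFI}(G))$ is governed not by the orbit of $u$ alone but by the orbit of the \emph{ordered pair} $(u,v)$ in $\mathrm{Aut}(G)$. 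Thus refining $\mathrm{CFI}(G)$ to its orbits requires refining $V(G)^2$ to its $2$-tuple orbits, and this is precisely where one extra WL dimension is spent: the paper's Lemma~\ref{thm:k+1} shows that if the recursive $k$-dim WL refines $G$ to its orbits at each step, then the $(k+1)$-dim WL refines the ordered $2$-tuples of $G$ to their orbits. Combined with Theorem~\ref{thm:kdimCFI} (your Stage~1 simulation, but at level $k$, not $k+1$), this gives the result.

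Your Stage~3 is correspondingly off target. The $a\!\leftrightarrow\!b$ swap on a fixed edge is a genuine automorphism of $\mathrm{CFI}(G)$, so WL must \emph{not} separate $a_{u,v}$ from $b_{u,v}$ prior to individualisation; what must be separated is $\{a_{u,v},b_{u,v}\}$ from $\{a_{u,w},b_{u,w}\}$ when $(u,v)$ and $(u,w)$ lie in different $2$-tuple orbits of $G$. Your ``propagation'' argument does not address this, and the appeal to ``a constant number of additional individualisations'' does not establish that the $(k+1)$-dim WL actually reaches the orbit partition at each step of the recursion, which is what ``characterising'' requires here. The fix is to replace Stages~1 and~3 by the $2$-tuple argument above; Stage~2 then becomes unnecessary as a separate step, since the same reasoning applies uniformly after each individualisation.
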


\begin{theorem}
\label{thm:kleinWL}
 The recursive $1$-dim WL method succeeds in characterising the graph $K(G)$, associated with a Klein scheme of $G$.
\end{theorem}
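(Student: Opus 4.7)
The plan is to show that successive individualisations followed by $1$-dim WL refinement eventually partition $V(K(G))$ into singletons, which together with the standard recursive branching yields a canonical form. Since each basis relation of the Klein scheme $\mathcal{C}$ is assigned a distinct edge colour in $K(G)$, a vertex's multiset of incident edge colours already identifies its fibre, so the initial stable $1$-dim WL partition separates the fibres $V_1,\ldots,V_s$.

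First I would individualise any single vertex $v \in V_{i_0}$. The three within-fibre basis relations $E_1,E_2,E_3$ on $V_{i_0}$ carry three distinct edge colours, so the remaining three vertices of $V_{i_0}$ are joined to $v$ by edges of three distinct colours and are immediately separated by $1$-dim WL; hence the entire fibre $V_{i_0}$ becomes individualised. Propagation via the inter-fibre relations $R_1,R_2$ in (\ref{eq:R12}) then splits each fibre $V_j$ with $j \in d(i_0)$ into its two cosets with respect to the subgroup $\langle c(j,i_0)\rangle$ of $V_j$ (viewed as the Klein $4$-group).

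The same argument applies inductively. Whenever an unindividualised fibre $V_j$ is selected and any one of its vertices $w$ is individualised, the three distinct within-fibre edge colours resolve the other three vertices of $V_j$, and propagation along the edges of $G$ extends the coset-partition to the neighbouring fibres. Traversing $V(G)$ in breadth-first order from $i_0$ and individualising at most one vertex per fibre, every fibre --- and hence every vertex of $K(G)$ --- eventually becomes a singleton. Different choices of individualised vertex within a given fibre differ by an element of the Klein-$4$ action on that fibre, which extends consistently to an automorphism of the whole scheme $\mathcal{C}$; the recursion branches are therefore related by automorphisms of $K(G)$, and the canonical form assembled from the branching is well-defined.

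The main obstacle is to verify carefully that at each stage $1$-dim WL actually produces the claimed refinement --- specifically, that the coset partition of $V_j$ induced by the $R_1,R_2$ relations to the already-individualised $V_{i_0}$ is exactly the $1$-dim WL partition, and that a single individualised vertex in $V_j$ is enough for $1$-dim WL to singletonise $V_j$. Both claims reduce to the distinctness of the three within-fibre colours $E_1,E_2,E_3$ together with the explicit form of (\ref{eq:R12}), which determines, for each $w\in V_j$, precisely which of the two cosets of $\langle c(i_0,j)\rangle$ in $V_{i_0}$ it is $R_1$-joined to.
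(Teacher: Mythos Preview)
Your approach and the paper's share the same skeleton: exploit the fact that every basis relation of $\mathcal{C}$ carries its own edge colour, so $\textrm{WL}_1$ immediately separates the fibres $V_1,\ldots,V_s$; then individualise inside a fibre and use the three distinct within-fibre colours $E_1,E_2,E_3$ to resolve that fibre completely, propagating outward along the $R_1,R_2$ relations. The difference is that the paper takes a much shorter route: it invokes the Corollary $|\textrm{Aut}(\mathcal{C}^*)|=1$ and asserts that a \emph{single} individualisation followed by $\textrm{WL}_1$ already reaches the discrete partition, with every fibre at distance $n$ fully resolved after $n$ iterations.

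You are in fact more careful than the paper here. Your observation that the $R_1,R_2$ edges from a neighbouring $V_j$ into a fully individualised $V_{i_0}$ only split $V_j$ into the two cosets of $\langle c(j,i_0)\rangle$ --- not into four singletons --- is correct, and the paper's stronger claim fails in general. (For $G=K_4$ the point stabiliser in $\textrm{Aut}(K(G))$ has order $2$, generated by the translation $(0,c(2,1),c(3,1),c(4,1))$, so one individualisation cannot give the discrete partition.) Budgeting one individualisation per fibre, as you do, is the right repair; two vertices in the same $\langle c(j,i_0)\rangle$-coset of $V_j$ genuinely have identical coloured-neighbourhood profiles once $V_{i_0}$ is discrete.

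The one place your argument should be tightened is the final step. You assert that different choices within a fibre ``extend consistently to an automorphism of the whole scheme $\mathcal{C}$'', but what the recursive method actually needs is that different choices within a \emph{current $\textrm{WL}_1$-cell} of $V_j$ are related by an automorphism \emph{fixing every previously individualised vertex}. Since the current cell of $V_j$ is an intersection of cosets of the subgroups $\langle c(j,k)\rangle$ coming from the already-resolved neighbours $k$, you should verify that translation by any element of that intersection extends to an automorphism of $K(G)$ acting trivially on the already-discrete fibres --- i.e.\ that at each stage the $\textrm{WL}_1$ partition coincides with the orbit partition of the pointwise stabiliser. This is where the argument really lives, and it deserves an explicit check rather than the one-line appeal to the Klein-$4$ action.
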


Following this, we will show a more general result; namely that if a graph $G$ is extended to some graph $G'$ by replacing each vertex by an unbiased gadget of a certain type, of which the F\"{u}rer gadget relating to CFI($G$) is one such example, then the $k$-dim WL method partitions the \emph{gadgets} of $G'$ into the same relative colour classes as it partitions the \emph{vertices} of $G$. Here the colour class of a gadget refers to the sorted set of colour classes of its constituent vertices.

\begin{definition}
\label{def:unbiased}
 The extension of a graph $G$ formed by replacing each vertex $v \in V(G)$ by some type of gadget $h(v)$ will be termed \emph{unbiased} if the resulting graph $G'$ has the following properties:
 \begin{itemize}
  \item Whenever $|d(u)| = |d(v)|$ for some $u,v \in V(G)$, the graphs induced on $h(u)$ and $h(v)$ are isomorphic.
  \item $\forall \: u,v \in V(G), \; \exists \: \gamma \in \textrm{ Aut}(G') \;\textrm{ such that }\; \gamma : h(u) \mapsto h(v)\;$ if and only if \newline $\exists \: \varphi \in \textrm{ Aut}(G)  \;\textrm{ such that }\;  \varphi : u \mapsto v,$
  \item For any $x,y \in V(G')$ such that $x \in h(u)$ and $y \in h(v)$ where $u \neq v$, we have $\{x,y\} \in E(G')$ only if $\{u,v\} \in E(G)$.
 \end{itemize}
\end{definition}

Only the first two properties are strictly necessary for the spirit of the term unbiased to hold, however the third property is included for ease of analysis. For instance, an alternative definition lacking the third requirement (but retaining the second) would allow gadgets which possess mutual connections if and only if the corresponding vertices of the initial graph are \emph{unconnected}.
We note that the third property holds for the sets of counterexamples proposed in both \cite{CFI} and \cite{Evdokimov99}.


\subsection{Properties of the CFI graph extension}
\label{subsec:CFI}

We will begin with the graph extension considered in \cite{CFI}, defined in Section \ref{sec:CFI}.

\begin{definition}
 Let the function $\Lambda : G \rightarrow \textrm{ CFI}(G)$ represent the extension of a graph obtained by replacing all vertices with their corresponding F\"{u}rer gadgets, connected as in Section \ref{sec:CFI}.
\end{definition}

To simplify some of the later analysis, we also introduce the notation:

\begin{definition}
 Given a vertex $x \in V(\textrm{CFI}(G))$, consider the function 
 \begin{align}
 \lambda^{-1} &: V(\textrm{CFI}(G)) \rightarrow V(G), \notag\\
 \lambda^{-1} &: x \mapsto v, \quad\;\; \forall x \in \textrm{CFI}(x),
 \end{align}
 which reverses the above process, mapping any vertex in the gadget CFI($v$) to the vertex $v \in V(G)$.
\end{definition}

The graph CFI($G$) has several important properties. Given a vertex $v \in V(G)$, the pair of vertices $a_{v,i},b_{v,i} \in \textrm{CFI}(v), i \in d(v)$ belong to the same orbit of Aut(CFI($G$)), and hence to the same colour class in $\textrm{WL}_k(\textrm{CFI}(G))$.

Similarly, all central vertices $m_S \in M(v)$ of a given gadget CFI($v$) also belong to the same orbit of Aut(CFI($G$)), and thus the same colour class of $\textrm{WL}_k(\textrm{CFI}(G))$.

With the exception of the case where $G$ is a cycle graph (this trivial case is assumed from this point to not occur), the following further properties regarding the relative colouring of the $A,B,M$ vertex sets also hold.

Since the $M(v)$ vertices are initially coloured differently to the $A(v)$ and $B(v)$ vertex sets, the 2-tuples $(a_{v,x},b_{v,x})$ and $(a_{v,x},b_{v,y})$, where $v,x,y \in V(G), x \neq y$ are assigned different colours by the $k$-dim WL method (for $k > 1$).

Another simple corollary of the definition of the CFI graph extension together with the above observations is that for all $u,v \in V(G)$, $u \neq v$, $$\textrm{Sort}[\textrm{WL}(\textrm{CFI}(u))] \;\textrm{ and }\; \textrm{Sort}[\textrm{WL}(\textrm{CFI}(v))]$$ are either equal or disjoint.

Before presenting a prove of Theorem \ref{thm:cfi(x)} we will focus on a simpler, `warm-up' case.

\begin{theorem}
 Given a graph $G$ with vertices $u,v \in V(G)$, $\textrm{WL}_1(u) \neq \textrm{WL}_1(v)$ if and only if $\;\textrm{Sort}[\textrm{WL}_1(\textrm{CFI}(u))] \neq \textrm{ Sort}[\textrm{WL}_1(\textrm{CFI}(v))]$
\end{theorem}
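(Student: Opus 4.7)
The plan is to reformulate the bi-implication as the equality of two partitions of $V(G)$. Writing $C := \textrm{WL}_1(G)$ and $D := \textrm{WL}_1(\textrm{CFI}(G))$, I view $\tilde{D} : v \mapsto \textrm{Sort}[\{D(x) : x \in V(\textrm{CFI}(v))\}]$ as a colouring of $V(G)$ in its own right; the theorem then reads $\tilde{D} \equiv C$ as partitions of $V(G)$. I will use the standard characterisation of $\textrm{WL}_1$ as the coarsest equitable refinement of the initial valency-and-colour partition, and prove the two refinement directions separately.

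For the forward direction ($C(u) \neq C(v) \Rightarrow \tilde{D}(u) \neq \tilde{D}(v)$), it suffices to show that $\tilde{D}$ is itself an equitable refinement of the initial partition of $G$: minimality of $C$ then forces $\tilde{D}$ to refine $C$, giving the desired contrapositive. Refinement of the initial partition is immediate, since $|V(\textrm{CFI}(v))|$ together with the relative sizes of its middle and boundary blocks all encode $|d(v)|$. Equitability reduces to showing that the multiset $\{\tilde{D}(u) : u \in d(v)\}$ is a function of $\tilde{D}(v)$, which I propose to establish via a short three-step chain inside $\textrm{CFI}(G)$: (i) $D(a_{v,u})$ determines $D(a_{u,v})$ of its unique external neighbour, since by the orbit observation recorded in the preamble all $2^{|d(v)|-2}$ of its other neighbours lie in the monochromatic block $M(v)$, so WL stability of $D$ pins down the external colour; (ii) $D(a_{u,v})$ in turn determines the common middle colour $c_M(u)$ of $M(u)$ by the same argument applied inside $\textrm{CFI}(u)$; and (iii) $c_M(u)$ determines $\tilde{D}(u)$, because $\tilde{D}(u)$ consists of $2^{|d(u)|-1}$ copies of $c_M(u)$ together with twice the multiset of edge-colours at $u$, and the latter multiset is literally the WL-neighbour multiset of any middle vertex.

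For the backward direction I introduce an auxiliary colouring $\bar{D}$ on $V(\textrm{CFI}(G))$ defined by $\bar{D}(m_S) := (M, C(v))$ for $m_S \in M(v)$ and $\bar{D}(a_{v,u}) := \bar{D}(b_{v,u}) := (AB, C(v), C(u))$, and verify that $\bar{D}$ is an equitable refinement of the initial CFI-level partition. The key inputs are stability of $C$ on $G$, which forces both $|d(v)| = |d(v')|$ and $\{C(u') : u' \in d(v)\} = \{C(u') : u' \in d(v')\}$ whenever $C(v) = C(v')$. Minimality of $D$ then yields that $D$ is a coarsening of $\bar{D}$, so equality of $\bar{D}$-multisets on two subsets of $V(\textrm{CFI}(G))$ lifts to equality of $D$-multisets. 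A direct verification then shows that $C(u) = C(v)$ forces the $\bar{D}$-multisets on $V(\textrm{CFI}(u))$ and $V(\textrm{CFI}(v))$ to coincide, and hence $\tilde{D}(u) = \tilde{D}(v)$.

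The main obstacle is the apparent self-reference in step (iii) of the forward chain, where $c_M(u)$ is used to recover $\tilde{D}(u)$ even though $c_M(u)$ is itself part of $\tilde{D}(u)$. This dissolves on observing that any single middle vertex $m_S \in M(u)$ is adjacent to exactly one of each pair $\{a_i, b_i\}$, so that $c_M(u)$'s WL-neighbour multiset literally enumerates the edge-colours of $\textrm{CFI}(u)$; combined with the preamble identities $D(a_{u,u'}) = D(b_{u,u'})$ supplying the doubling to the full boundary block, this recovers $\tilde{D}(u)$ with no genuine circularity.
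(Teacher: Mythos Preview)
Your argument is correct, and it takes a genuinely different route from the paper's own proof. The paper proceeds by induction on the WL step index $i$, maintaining the hypothesis that $\textrm{WL}^{i}_1(a) = \textrm{WL}^{i}_1(b)$ (for $a \in \textrm{CFI}(u)$, $b \in \textrm{CFI}(v)$) forces $\textrm{WL}^{i}_1(u) = \textrm{WL}^{i}_1(v)$, and then argues the $(i+1)$-step by passing to neighbour multisets; the converse direction is asserted to follow ``similarly'' once one restricts to vertices of matching type. You instead invoke the global characterisation of $\textrm{WL}_1$ as the coarsest equitable refinement and dispense with step-indices entirely: the forward direction becomes the single check that $\tilde{D}$ is equitable on $G$ (for which your chain (i)--(iii) through the edge-vertices and middle block is the substantive content), and the backward direction is handled by the clean auxiliary colouring $\bar{D}$, whose equitability on $\textrm{CFI}(G)$ follows immediately from stability of $C$ on $G$. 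Your approach makes the mechanism more transparent --- in particular, the backward direction via $\bar{D}$ is essentially a one-line verification once $\bar{D}$ is written down, whereas the paper leaves this direction largely to the reader --- and your step (iii) makes explicit exactly how a single middle colour encodes the full gadget signature, which is the point at which the paper's inductive step (``this in turn implies that $\textrm{WL}^{i+1}_1(u) = \textrm{WL}^{i+1}_1(v)$'') is doing unspoken work. The paper's approach has the minor advantage of using only the iterative definition of WL and not its extremal characterisation, but your argument is tighter.
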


\begin{proof}
 Consider the vertices $u,v \in V(G)$ with associated gadgets $\textrm{CFI}(u), \textrm{ CFI}(v) \subset \textrm{ CFI}(G)$. Given $a \in \textrm{ CFI}(u), b \in \textrm{ CFI}(v)$, we have 
 \begin{equation}
   d(a) = \left\{
    \begin{array}{rl}
     |d(u)| & \textrm{if } a \in M(u),\\
     2^{|d(u)|-2}+1 & \textrm{otherwise}.
    \end{array} \right.
 \end{equation}
 Hence $\textrm{WL}^{0}_{1}(a) = \textrm{WL}^{0}_{1}(b)$ implies that $\textrm{WL}^{0}_{1}(u) = \textrm{WL}^{0}_{1}(v)$. Furthermore, if either $a \in M(u)$ and $b \in M(v)$, or $a \notin M(u)$ and $b \notin M(v)$ holds, then the converse is true, and we have $\textrm{WL}^{0}_{1}(a) = \textrm{WL}^{0}_{1}(b)$ if and only if $\textrm{WL}^{0}_{1}(u) = \textrm{WL}^{0}_{1}(v)$. 

 Conversely, assume that $\textrm{WL}^{i}_{1}(a) = \textrm{WL}^{i}_{1}(b)$ implies that $\textrm{WL}^{i}_{1}(u) = \textrm{WL}^{i}_{1}(v)$ for some $i \in \mathbb{Z}$. For any $x \in V(G)$,
 \begin{align}
  \textrm{WL}^{i+1}_{1}(x) = \langle \: \textrm{WL}^{i}_{1}(x), \textrm{ Sort}\{ \textrm{WL}^{i}_{1}(y) : y \in d(x)\}, \textrm{ Sort}\{\textrm{WL}^{i}_{1}(y) : y \in e(x)\} \:\rangle,
 \end{align}
 hence $\textrm{WL}^{i+1}_{1}(a) = \textrm{WL}^{i+1}_{1}(b)$ implies that $\textrm{ Sort}\{\textrm{WL}^{i}_{1}(y) : y \in d(a)\} = \textrm{ Sort}\{\textrm{WL}^{i}_{1}(y) : y \in d(b)\}$, and similarly for elements of $e(a)$ and $e(b)$. This in turn implies that $\textrm{WL}^{i+1}_{1}(u) = \textrm{WL}^{i+1}_{1}(v)$. Hence by induction we have
 \begin{align}
  \textrm{WL}_1(u) \neq \textrm{WL}_1(v) \textrm{ only if } \textrm{WL}_1(a) \neq \textrm{WL}_1(b).
 \end{align}
 Similarly, the converse follows if we restrict $a$ and $b$ such that either $a \in M(u)$ and $b \in M(v)$, or $a \notin M(u)$ and $b \notin M(v)$ holds, or if we consider the sorted set of colour classes associated with CFI($u$) and CFI($v$).
\end{proof}

A similar induction proof technique can be used to show that this result also holds for $k$-dim WL, for any $k$. A few requisite properties of the CFI graphs will first be established. Let $G$ be a graph, with $u,v,x,y \in V(G)$. Note that $\textrm{WL}_k(u,v) = \textrm{WL}_k(x,y)$ only if the number of paths of \emph{each} length connecting $u,v$ and $x,y$ respectively are equal \cite{Alzaga10}. Further, let the \emph{character} of a path $(x_1,\ldots,x_t)$ denote the ordered set of colour classes associated with each element of the path, $(\textrm{WL}_k(x_1), \ldots, \textrm{WL}_k(x_t))$. Then $\textrm{WL}_k(u,v) = \textrm{WL}_k(x,y)$ only if the number of paths of each length and of each character connecting $u,v$ and $x,y$ respectively are equal. Hence the following hold.

Let $G$ be a graph, with $u,v,w \in V(G)$, where $u \neq v, u \neq w$.
\begin{lemma}
\label{lem:WL1}
 Let $x_1,x_2 \in \textrm{CFI}(u), y \in \textrm{CFI}(v)$. Then for $k > 1$, $\textrm{WL}_k(x_1,x_2) \neq \textrm{WL}_k(x_1,y)$.
\end{lemma}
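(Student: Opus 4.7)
The plan is to invoke the walk-count characterisation stated just before the lemma (reading ``paths'' in the matrix-power sense): if $\textrm{WL}_k(a,b) = \textrm{WL}_k(c,d)$ for $k \ge 2$, then the number of length-$\ell$ walks joining $a$ and $b$ matches the count joining $c$ and $d$ for every length $\ell$ and every character. Since $k$-dim WL for $k>2$ refines at least as finely as the $2$-dim version, I work with $k=2$, and exhibit one length and character on which the two walk counts disagree.

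First, I would dismiss the sub-cases in which $\textrm{WL}_k^{0}$ already separates $(x_1,x_2)$ from $(x_1,y)$, namely whenever the adjacency status differs or the middle/outer colour combination of the two members differs; these are immediate. In the remaining sub-cases $x_2$ and $y$ carry the same vertex colour and the same adjacency relation to $x_1$, and for most of them a length-$2$ walk count already suffices. Outer-outer same-gadget pairs share $2^{|d(u)|-3}$ common middle neighbours for $|d(u)|\ge 3$, while outer pairs across distinct gadgets share none, since the middle-neighbourhood of any vertex lies wholly inside its own gadget. Middle-middle same-gadget pairs $(m_S,m_T)$ with $T\ne\overline{S}$ enjoy $|d(u)|-|S\triangle T|>0$ common outer neighbours versus $0$ across gadgets, and middle-outer non-adjacent same-gadget pairs are separated by counting length-$3$ walks of character $(\textrm{middle},\textrm{outer},\textrm{middle},\textrm{outer})$, which are strictly positive inside a gadget and zero across gadgets, since the only off-gadget neighbour of an outer vertex is the matched $a$- or $b$-vertex in an adjacent gadget, whose onward neighbours cannot reach a middle vertex in any other gadget in a single step.

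The genuine obstacle is the symmetric middle-middle sub-case $(x_1,x_2)=(m_S,m_{\overline{S}})$ in $\textrm{CFI}(u)$, for which both pairs have zero common neighbours and the length-$2$ invariant collapses. I would settle it by counting length-$4$ walks with character $(\textrm{middle},\textrm{outer},\textrm{middle},\textrm{outer},\textrm{middle})$. Within $\textrm{CFI}(u)$, every even $R\subseteq [d(u)]$ meeting both $S$ and $\overline{S}$ supplies walks $m_S \to a_{u,i} \to m_R \to a_{u,j} \to m_{\overline{S}}$ with $i\in S\cap R$ and $j\in\overline{S}\cap R$, so the count is strictly positive for $|d(u)|\ge 3$. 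Across gadgets the count is forced to zero: the intermediate middle $m_R$ on such a walk would need to be adjacent to outer vertices in both $\textrm{CFI}(u)$ and $\textrm{CFI}(v)$, yet every middle vertex has only intra-gadget neighbours. The analogous symmetric outer-outer sub-case $(a_{u,i},b_{u,i})$ is handled identically by the same length-$4$ count, and the small-valence degenerate configurations (where $\textrm{CFI}(u)$ may even be internally disconnected) are separated by walk counts of slightly greater length, exploiting the fact that the only way to traverse the graph is via the rigid cross-gadget $a,b$ edges, which leaves a detectable signature in the walk-count vector.
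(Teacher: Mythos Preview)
Your approach is precisely the one the paper intends: the paragraph immediately preceding the lemma records the walk-count invariant (equal numbers of paths of every length and every character whenever the $k$-dim WL colours agree), and then simply writes ``Hence the following hold,'' stating Lemmas~\ref{lem:WL1} and~\ref{lem:WL2} with no further argument. So there is no detailed proof in the paper to compare against; you have supplied the case analysis the paper leaves implicit.

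Your analysis is largely sound, but two points could be tightened. In the length-$4$ argument for the complementary middle pair $(m_S,m_{\overline S})$, the reason the cross-gadget count vanishes is not quite that the intermediate middle vertex must be adjacent to outer vertices in both gadgets; rather, every middle--outer step is intra-gadget, so a walk of character $(\textrm{middle},\textrm{outer},\textrm{middle},\textrm{outer},\textrm{middle})$ can never leave the gadget of its starting vertex, and in particular cannot terminate in $\textrm{CFI}(v)$. This is the clean justification and it applies uniformly to all your sub-cases that route through middle vertices. Second, the closing hand-wave on ``small-valence degenerate configurations'' is a genuine loose end: for $|d(u)|=2$ several of your positivity claims (e.g.\ the $2^{|d(u)|-3}$ common middle neighbours) break down. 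The paper excludes cycle graphs $G$, which rules out the worst case, but vertices of degree $\le 2$ can still occur in non-cycle $G$; these gadgets are small enough to dispatch by direct inspection, and you should say so explicitly rather than gesture at longer walks.
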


\begin{lemma}
\label{lem:WL2}
 Let $\{u,v\} \in E(G), \{u,w\} \notin E(G)$. Let $x \in \textrm{CFI}(u), y \in \textrm{CFI}(v), z \in \textrm{CFI}(w)$. Then $\textrm{WL}_k(x,y) \neq \textrm{WL}_k(x,z)$.
\end{lemma}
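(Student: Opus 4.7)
The plan is to derive a contradiction from the assumption $\textrm{WL}_k(x,y) = \textrm{WL}_k(x,z)$ by comparing the colours of neighbouring tuples obtained from $(x,y)$ and $(x,z)$ under the WL recursion, exploiting the path-counting criterion emphasised just before Lemma \ref{lem:WL1}.

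The geometric content is that $\textrm{CFI}(u)$ and $\textrm{CFI}(v)$ are directly joined by the bridge edges $a_{u,v}a_{v,u}$ and $b_{u,v}b_{v,u}$ exactly when $\{u,v\} \in E(G)$, so every $y \in \textrm{CFI}(v)$ is reachable from $x \in \textrm{CFI}(u)$ by a short path whose vertices lie entirely in $\textrm{CFI}(u) \cup \textrm{CFI}(v)$. By contrast, any path from $x \in \textrm{CFI}(u)$ to $z \in \textrm{CFI}(w)$ with $\{u,w\} \notin E(G)$ must traverse an intermediate gadget $\textrm{CFI}(t)$, thereby visiting a vertex whose WL colour is different from every colour realised in $\textrm{CFI}(u) \cup \textrm{CFI}(v)$, at least whenever $t$ does not lie in the $\textrm{Aut}(G)$-orbit of $u$ or $v$; this last statement is precisely the content of the warm-up theorem applied to the vertex-level extension.

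I would then run an induction on the WL iteration count in the style of the warm-up theorem. Assuming $\textrm{WL}^{i}_k(x,y) = \textrm{WL}^{i}_k(x,z)$, I would locate a ``bridge witness'' vertex $p$ (say $p = a_{u,v}$) that sits on a short path from $x$ to $y$ inside $\textrm{CFI}(u) \cup \textrm{CFI}(v)$. The corresponding witness for $(x,z)$ would be forced to be a vertex of some intermediate gadget, and therefore carries a different WL colour at step $i$. This discrepancy propagates through the recursion (\ref{eq:WL_k}) (together with the downward recursion for lower-arity tuples quoted in Chapter \ref{sec:WLprops}) into a discrepancy between the step-$(i+1)$ colours of $(x,y)$ and $(x,z)$, contradicting stability.

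The hard part will be the case in which $t$ does lie in the $\textrm{Aut}(G)$-orbit of $u$ or $v$, so intermediate-gadget vertices can share a single-vertex WL colour with $\textrm{CFI}(u)$ or $\textrm{CFI}(v)$ vertices. Here one must lift the argument from single-vertex to $2$-tuple colours and invoke Lemma \ref{lem:WL1}: even when $p$ shares a single-vertex colour with some vertex of $\textrm{CFI}(u)$, the $2$-tuple $(x,p)$ is a cross-gadget pair and is therefore distinguished from every intra-$\textrm{CFI}(u)$ pair. Tracking this pair-level distinction through the recursion, and confirming that the required discrepancy manifests in the $2$-tuple colour of $(x,y)$ itself before the colouring stabilises, is where the bookkeeping is most delicate.
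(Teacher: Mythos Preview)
Your proposal has the right starting point --- the path-character criterion stated just before Lemma~\ref{lem:WL1} --- but then veers into unnecessary complications. The paper's argument is essentially a one-liner from that criterion, and your ``hard case'' is a red herring.

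The direct argument the paper intends is this. In $\textrm{CFI}(G)$ the middle vertices $M(\cdot)$ carry a colour distinct from the outer vertices $A(\cdot)\cup B(\cdot)$. Every intra-gadget edge is middle--to--outer, and every inter-gadget (bridge) edge is outer--to--outer. Thus the \emph{character} of a path --- the sequence of M/non-M colours along it --- records exactly how many bridge edges it traverses: these are precisely the positions where two consecutive non-$M$ colours occur. Since $\{u,v\}\in E(G)$, there is a path from $x$ to $y$ using exactly one bridge; since $\{u,w\}\notin E(G)$, every path from $x$ to $z$ uses at least two. Hence some character is realised by a path from $x$ to $y$ but by no path from $x$ to $z$, and the path-character criterion gives $\textrm{WL}_k(x,y)\neq\textrm{WL}_k(x,z)$ immediately.

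Note that this uses only the \emph{initial} M/non-M colouring, not refined WL vertex colours. That is why your ``hard case'' (an intermediate $t$ lying in the $\textrm{Aut}(G)$-orbit of $u$ or $v$) never arises: the argument does not care whether vertices of $\textrm{CFI}(t)$ share refined WL colours with those of $\textrm{CFI}(u)$ or $\textrm{CFI}(v)$, only whether they are middle or outer. Your proposed induction on WL iterations, the witness vertex $p$, and the appeal to Lemma~\ref{lem:WL1} are all superfluous. Moreover, the induction as you sketch it is not well-posed: you assert that ``the corresponding witness for $(x,z)$ would be forced to be a vertex of some intermediate gadget'', but the WL recursion ranges over \emph{all} vertices, and nothing in your setup pins the matching witness to any particular gadget. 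Drop the induction and just count path characters.
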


\begin{corollary}
 \label{cor:iso}
 Let $x,y,z \in \textrm{CFI}(G)$. Then $$\textrm{WL}_k(x,y) = \textrm{WL}_k(x,z) \;\;\textrm{ only if }\;\; \textrm{iso}(\lambda^{-1}(x),\lambda^{-1}(y)) = \textrm{iso}(\lambda^{-1}(x),\lambda^{-1}(z)).$$
\end{corollary}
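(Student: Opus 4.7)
The plan is a case analysis on whether $u := \lambda^{-1}(x)$, $v := \lambda^{-1}(y)$, $w := \lambda^{-1}(z)$ coincide or are joined by edges in $G$, since the iso-type of an ordered $2$-tuple in $G$ is determined by (i) whether its two entries are equal, (ii) whether they form an edge, and (iii) the vertex colouring. Lemmas \ref{lem:WL1} and \ref{lem:WL2} provide precisely the obstructions needed to force $(u,v)$ and $(u,w)$ to agree on (i) and (ii).

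First I would establish $u = v$ if and only if $u = w$. Suppose $u = v$, so $x, y \in \textrm{CFI}(u)$; were $w \neq u$, Lemma \ref{lem:WL1} (applied with $x_1 = x$, $x_2 = y$ and the $y$ of that lemma taken to be $z$) would give $\textrm{WL}_k(x,y) \neq \textrm{WL}_k(x,z)$, contradicting the hypothesis. Hence $w = u$, and the two tuples are both $(u,u)$. The reverse implication is symmetric in the roles of $y$ and $z$. In the remaining case $u \neq v$ and $u \neq w$, I would show $\{u,v\} \in E(G)$ if and only if $\{u,w\} \in E(G)$: if one edge were present and the other absent, Lemma \ref{lem:WL2}, with $x \in \textrm{CFI}(u)$ and the roles of the "adjacent" and "non-adjacent" representatives assigned accordingly, would yield $\textrm{WL}_k(x,y) \neq \textrm{WL}_k(x,z)$, again a contradiction.

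The only loose end is colour matching: if $G$ carries non-trivial vertex colours, one must also check $\omega(v) = \omega(w)$. This follows because $\textrm{WL}_k(x,y) = \textrm{WL}_k(x,z)$ refines to the induced $1$-tuple colouring, giving $\textrm{WL}_k(y) = \textrm{WL}_k(z)$, and the induction of the warm-up theorem preceding Lemma \ref{lem:WL1}, lifted to $k$-tuples, shows the $k$-dim WL colour of any gadget vertex determines the colour of its $\lambda^{-1}$-preimage in $G$. Since the two lemmas do essentially all of the work, I do not expect any genuine obstacle: the corollary is an immediate repackaging of them, with the case analysis above serving only to route each potential iso-discrepancy to the appropriate lemma.
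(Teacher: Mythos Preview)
Your proposal is correct and follows essentially the same approach as the paper: a case split on the three iso-types of $(u,v)$ (equal, adjacent, non-adjacent distinct), with Lemmas~\ref{lem:WL1} and~\ref{lem:WL2} supplying the contradictions. The paper handles the vertex-colour case more tersely---it simply observes that differing initial colours of $(u,v)$ and $(u,w)$ force differing initial colours of the corresponding gadget pairs---whereas you route it through the $1$-tuple refinement and the warm-up theorem; both are fine, and your version is if anything more explicit.
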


\begin{proof}
 Firstly, recall that $\textrm{iso}(x_1 \ldots x_k) = \textrm{ iso}(y_1 \ldots y_k)$ if and only the relevant vertex and edge colourings match, and
 \begin{align}
  &x_i = x_j \textrm{ if and only if } y_i = y_j, \textrm{ and} \notag\\
  &(x_i,x_j) \in E(G) \textrm{ if and only if } (y_i,y_j) \in E(G).
 \end{align}
 Let $\lambda^{-1}(x) = u, \lambda^{-1}(y) = v, \lambda^{-1}(z) = w$. If the pairs $(u,v)$ and $(u,w)$ have different initial colours in $G$, then trivially the pairs of gadgets $(\textrm{CFI}(u),\textrm{CFI}(v))$ and $(\textrm{CFI}(u),\textrm{CFI}(w))$ have different initial colours in $\textrm{CFI}(G)$. The colour of $\textrm{iso}(u,v)$ further reflects which of the following holds:
 \begin{itemize}
  \item[(i)] $u = v$
  \item[(ii)] $\{u,v\} \in E(G)$
  \item[(iii)] $\{u,v\} \notin E(G)$, $u \neq v$.
 \end{itemize}
 In each case, by Lemmas \ref{lem:WL1} and \ref{lem:WL2} this information is also contained in the colouring of $\textrm{WL}_k(x,y)$.
\end{proof}

Hence a generalisation of the $k=1$ result to all $k$ can be derived.

\begin{theorem}
\label{thm:kdimCFI}
 Given a graph $G$, where $u,v \in V(G)$, 
 \begin{align*}
  \textrm{WL}_k(u) \neq \textrm{WL}_k(v) \quad \textrm{only if} \quad \textrm{Sort}[\textrm{WL}_k(\textrm{CFI}(u))] \neq \textrm{Sort}[\textrm{WL}_k(\textrm{CFI}(v))].
 \end{align*}

\end{theorem}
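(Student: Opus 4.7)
The plan is to extend the inductive argument used in the $k=1$ warm-up to general $k$, taking the Lemmas and Corollary just established as the base case. Define the componentwise projection $\varphi:V(\textrm{CFI}(G))^k\to V(G)^k$ by $\varphi(\bar{x})=(\lambda^{-1}(x_1),\ldots,\lambda^{-1}(x_k))$. The key step is to prove the following strengthened statement by induction on the WL iteration count $t$: for all $k$-tuples $\bar{x},\bar{y}\in V(\textrm{CFI}(G))^k$, if $\textrm{WL}^t_k(\bar{x})=\textrm{WL}^t_k(\bar{y})$ in $\textrm{CFI}(G)$, then $\textrm{WL}^t_k(\varphi(\bar{x}))=\textrm{WL}^t_k(\varphi(\bar{y}))$ in $G$. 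For $t=0$ this reduces to the claim that $\textrm{iso}(\bar{x})$ in $\textrm{CFI}(G)$ determines $\textrm{iso}(\varphi(\bar{x}))$ in $G$, which follows by applying Corollary \ref{cor:iso} together with Lemmas \ref{lem:WL1} and \ref{lem:WL2} coordinate-pairwise on $\bar{x}$; each pair $(x_i,x_j)$ encodes whether $\lambda^{-1}(x_i)=\lambda^{-1}(x_j)$, whether $\{\lambda^{-1}(x_i),\lambda^{-1}(x_j)\}\in E(G)$, and the colour of the projected pair.

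For the inductive step, if $\textrm{WL}^{t+1}_k(\bar{x})=\textrm{WL}^{t+1}_k(\bar{y})$ then both the previous-step colour and the sorted multiset $\textrm{Sort}\{\textrm{WL}^t_k(\bar{x}'(z)):z\in V(\textrm{CFI}(G))\}$ agree with their $\bar{y}$ counterparts. By the inductive hypothesis each $\textrm{WL}^t_k(\bar{x}'(z))$ in $\textrm{CFI}(G)$ determines $\textrm{WL}^t_k(\varphi(\bar{x})'(\lambda^{-1}(z)))$ in $G$; in particular, the $|\textrm{CFI}(w)|$ vertices $z$ lying in a single gadget $\textrm{CFI}(w)$ all project to the same $G$-colour $\textrm{WL}^t_k(\varphi(\bar{x})'(w))$. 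To recover the target multiset $\textrm{Sort}\{\textrm{WL}^t_k(\varphi(\bar{x})'(w)):w\in V(G)\}$ from the projected multiset over $V(\textrm{CFI}(G))$, I would divide the count of each projected colour by the gadget size $|\textrm{CFI}(w)|=2^{|d(w)|-1}+2|d(w)|$, which is itself readable from the stable WL-colour since valency is encoded by the very first WL iteration. This yields exactly the update equation for $\textrm{WL}^{t+1}_k$ in $G$, closing the induction.

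Taking $t\to\infty$ gives a stable version, and I would then derive the $1$-tuple statement via the recursive formula $\textrm{WL}_k(u)=\langle\textrm{Sort}\{\textrm{WL}_k(u,i):i\in V(G)\}\rangle$ and its iterates: $\textrm{WL}_k(u)$ is determined by the multiset of $k$-tuple colours through $u$, which by the key lemma above can be read off (after accounting for gadget fibre sizes) from the multiset of $k$-tuple colours through any vertex of $\textrm{CFI}(u)$ in $\textrm{CFI}(G)$, and hence from $\textrm{Sort}[\textrm{WL}_k(\textrm{CFI}(u))]$. Thus whenever these gadget-level multisets coincide for $u$ and $v$, so do $\textrm{WL}_k(u)$ and $\textrm{WL}_k(v)$, which is the contrapositive of the desired conclusion.

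The main obstacle I anticipate is the multiset bookkeeping in the inductive step — showing that summation over $z\in V(\textrm{CFI}(G))$ factors cleanly through summation over $w\in V(G)$ with the correct gadget multiplicities. This rests on two facts: that $|\textrm{CFI}(w)|$ is a function of information already encoded in the WL-colour of $w$, and that the projection $\textrm{WL}^t_k(\bar{x}'(z))\mapsto\textrm{WL}^t_k(\varphi(\bar{x})'(\lambda^{-1}(z)))$ is well-defined on WL equivalence classes. Both follow from the inductive hypothesis together with Lemmas \ref{lem:WL1}, \ref{lem:WL2} and Corollary \ref{cor:iso}, but the fibration structure must be tracked carefully to ensure no information about gadget-internal colour multisets is conflated across distinct base vertices.
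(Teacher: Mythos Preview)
Your overall strategy matches the paper's: an induction on the WL iteration count showing that equality of $k$-tuple colours in $\textrm{CFI}(G)$ forces equality of the projected $k$-tuple colours in $G$, with Corollary~\ref{cor:iso} and Lemmas~\ref{lem:WL1}--\ref{lem:WL2} supplying the base case. The multiset bookkeeping you flag as the main obstacle is handled essentially as you describe, and the paper is in fact less careful about it than you are.

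There is, however, a genuine gap in your base case. You set up the induction with a \emph{single} iteration index $t$ on both the $\textrm{CFI}(G)$ and $G$ sides, so at $t=0$ you need that $\textrm{iso}(\bar{x})$ in $\textrm{CFI}(G)$ determines $\textrm{iso}(\varphi(\bar{x}))$ in $G$. This is false. Take two middle vertices $m_1\in M(u)$ and $m_2\in M(v)$ with $u\neq v$: the pair $(m_1,m_2)$ consists of two $M$-coloured, non-adjacent vertices, and so has the \emph{same} isomorphism type as any pair $(m_1,m_3)$ with $m_3\in M(u)$. Yet the projections $(u,v)$ and $(u,u)$ have different isomorphism types in $G$. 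The lemmas you cite cannot rescue this: Lemma~\ref{lem:WL1}, Lemma~\ref{lem:WL2} and Corollary~\ref{cor:iso} are all stated for the \emph{stable} colouring $\textrm{WL}_k$, not for $\textrm{WL}^0_k=\textrm{iso}$, and they do not hold at the level of raw isomorphism type.

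The paper avoids this by decoupling the two iteration counts: it assumes that $\textrm{WL}^{m}_k(S_1)\neq\textrm{WL}^{m}_k(S_2)$ implies $\textrm{WL}^{n}_k(R_1)\neq\textrm{WL}^{n}_k(R_2)$ for some pair $(m,n)$, and inducts $(m,n)\mapsto(m+1,n+1)$. The base case is then $m=0$ on the $G$ side against the \emph{stable} colouring on the $\textrm{CFI}(G)$ side, which is exactly what Corollary~\ref{cor:iso} provides. Equivalently, you could keep your contrapositive phrasing but change the inductive statement to: if $\textrm{WL}_k(\bar{x})=\textrm{WL}_k(\bar{y})$ (stable) in $\textrm{CFI}(G)$, then $\textrm{WL}^t_k(\varphi(\bar{x}))=\textrm{WL}^t_k(\varphi(\bar{y}))$ in $G$ for all $t$. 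With that adjustment your argument goes through, and the rest of your write-up is sound.
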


\begin{proof}
 In Chapter \ref{sec:WLprops}, for a given $t$-tuple $\textbf{z} \in V(G)^t$, $\;t < k$, we define $\textrm{WL}^{t}_k(\textbf{z})$ recursively by $$\textrm{WL}^{t}_k(\textbf{z}) = \langle \:\textrm{Sort}\{ \textrm{WL}^{t}_k((\textbf{z},i)) : i \in V(G)\} \:\rangle.$$
 Hence by Corollary \ref{cor:iso} above,
 \begin{align}
 \label{initial}
  \textrm{WL}^{0}_{k}(u) \neq \textrm{WL}^{0}_{k}(v) \textrm{ only if } \textrm{ Sort}[\textrm{WL}_{k}(\textrm{CFI}(u))] \neq \textrm{ Sort}[\textrm{WL}_{k}(\textrm{CFI}(v))].
 \end{align}
 Let $S_1,S_2 \subset V(G)$ be $k$-tuples of $G$, and $R_1,R_2 \subset V(\textrm{CFI}(G))$ be $k$-tuples of CFI($G$), such that $\lambda^{-1}(R_i) = S_i$, for $i \in \{1,2\}$.

 Assume that for some $m,n \in \mathbb{Z}$, $\textrm{WL}^{m}_k(S_1) \neq \textrm{WL}^{m}_{k}(S_2)$ implies that $\textrm{WL}^{n}_{k}(R_1) \neq \textrm{WL}^{n}_{k}(R_2)$, for all $S_1,S_2 \subset V(G)$, and all such $R_1,R_2$.

 Then consider a \emph{specific} set of $k$-tuples $S_1,S_2,R_1,R_2$ with the property
 \begin{align}
  &\textrm{WL}^{m}_{k}(S_1) = \textrm{WL}^{m}_{k}(S_2), \notag\\
  &\textrm{WL}^{m+1}_{k}(S_1) \neq \textrm{WL}^{m+1}_{k}(S_2).
 \end{align}
 In other words,
 $$ \textrm{Sort}\{\textrm{WL}^{m}_{k}(S^{'}_{1}(x) : x \in V(G)\} \neq \textrm{Sort}\{\textrm{WL}^{m}_{k}(S^{'}_{2}(x) : x \in V(G)\},$$
 where $S^{'}(x)$ is defined as in equation (\ref{eq:S'}) to be the set of `neighbouring' $k$-tuples to $S$, containing $x$.
 But by the assumption above, this implies
 $$ \textrm{Sort}\{\textrm{WL}^{n}_{k}(R^{'}_{1}(x) : x \in V(\textrm{CFI}(G))\} \neq \textrm{Sort}\{\textrm{WL}^{n}_{k}(R^{'}_{2}(x) : x \in V(\textrm{CFI}(G))\},$$
 hence $\textrm{WL}^{n+1}_{k}(R_1) \neq \textrm{WL}^{n+1}_{k}(R_2)$.

 Hence, as this assumption holds for $m=0$ it holds for all $m$ by induction.
\end{proof}

One final preliminary is needed before addressing Theorem \ref{thm:cfi(x)}.

\begin{lemma}
 \label{thm:k+1}
 If the recursive $k$-dim WL method refines the graph $G$ to its orbits at each step, then the recursive $(k+1)$-dim WL method refines the ordered 2-tuples of $G$ to their orbits at each step.
\end{lemma}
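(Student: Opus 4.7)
The plan is to exploit the standard correspondence between the $(k+1)$-dim WL method applied to ordered pairs and the $k$-dim WL method applied to the original graph with one vertex individualised. Specifically, for any ordered pair $(u,v) \in V(G)^2$, the colouring $\textrm{WL}_{k+1}(u,v)$ refines at least as finely as $\textrm{WL}_k(u)\mid_{G_v}$, where $G_v$ denotes $G$ with $v$ assigned a unique singleton colour. The intuition is that in the $(k+1)$-dim WL process, restricting attention to $(k+1)$-tuples whose last coordinate is $v$ effectively individualises $v$, so the resulting refinement on the first $k$ coordinates dominates $k$-dim WL on $G_v$.

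First I would establish this correspondence formally by induction on the number of WL iterations $t$. The base case $t = 0$ holds since $\textrm{iso}(u,v)$ records whether $u = v$ and whether $\{u,v\} \in E(G)$, which matches the initial colour information about $u$ in $G_v$. For the inductive step, I would use the recursive definition in (\ref{eq:WL_k}) and observe that the sorted multiset of $\textrm{WL}_{k+1}$-colourings of neighbouring $(k+1)$-tuples of $(u_1,\ldots,u_k,v)$ in which the last coordinate remains $v$ encodes exactly the sorted multiset of $\textrm{WL}_k$-neighbours of $(u_1,\ldots,u_k)$ in $G_v$. Hence at every step the $(k+1)$-dim WL colouring on pairs $(u,v)$ is at least as refined as $k$-dim WL on $G_v$.

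Next, using the hypothesis that the recursive $k$-dim WL refines $G$ (and hence any coloured variant $G_v$) to its orbits at each step, I would deduce that the recursive $k$-dim WL applied to $G_v$ refines $V(G) \setminus \{v\}$ to the orbits of $\textrm{Aut}(G_v)$. But $\textrm{Aut}(G_v)$ is precisely the point stabiliser $\textrm{Aut}(G)_v$, whose orbits on $V(G)$ are exactly the orbits of $\textrm{Aut}(G)$ acting on the set of ordered pairs with second coordinate $v$. Varying over all $v \in V(G)$ then yields that the recursive $(k+1)$-dim WL refines $V(G)^2$ to the orbits of $\textrm{Aut}(G)$ on ordered 2-tuples.

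The main obstacle will be reconciling the two recursion schemes at each recursive depth. The recursive $k$-dim WL proceeds by alternating WL refinement with some individualisation step, and I must argue inductively that the $(k+1)$-dim WL recursion, when restricted to pairs with a fixed second coordinate $v$, mirrors the $k$-dim WL recursion on $G_v$ at every depth, not merely in the first pass. This requires showing that once the colour classes of pairs stabilise at some recursive level, further individualisation on the $(k+1)$-dim WL side corresponds to analogous individualisation on the $k$-dim WL side applied to $G_v$; since both recursions are driven by formally analogous colour information at each stage, this inductive correspondence should close, completing the proof.
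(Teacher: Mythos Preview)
Your proposal is correct and follows essentially the same route as the paper: both arguments hinge on the correspondence between the $(k+1)$-dim WL colour of a pair $(u,v)$ and the $k$-dim WL colour of $u$ in the graph with $v$ individualised, then invoke the hypothesis on $G_v$ to conclude that equal pair-colours force the same orbit under $\textrm{Aut}(G)$. Your write-up is in fact somewhat more explicit than the paper's---you spell out the inductive correspondence on WL iterations and flag the need to carry the argument through each recursive individualisation step, whereas the paper handles both points tersely.
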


\begin{proof}
 This follows relatively directly from the definition of the $k$-dim WL method. Consider the pointwise stabiliser of $\textrm{WL}_k(G)$, in which a single vertex belonging to a particular colour class has been individualised. At each step of the recursive individualisation, the $k$-dim WL method again refines the resulting graph to its orbits. Denote the graph resulting from applying the $k$-dim WL method, then individualising some vertex (belonging to an orbit of size greater than one) $i$ times by $G^{(i)}_{k}(T)$, where $T = (t_1, \ldots, t_i)$, $t_j$ denoting the vertex individualised at step $j$ in the recursive method.

 Given two ordered pairs $(u,v), (x,y) \in V \times V$, assume that
 \begin{align}
 \label{eq:pairs}
  (\textrm{WL}_k(u),\textrm{WL}_k(v)) &= (\textrm{WL}_k(x),\textrm{WL}_k(y)), \textrm{ and} \notag\\
  \textrm{WL}_k(v) |_{G^{(1)}_{k}(u)} &= \textrm{WL}_k(y) |_{G^{(1)}_{k}(x)}.
 \end{align}
 Denote by $S_k(a,b)$ the set of $(k)$-tuples containing $a$ and $b$,  $S_k(a) = {R \in V(G)^{k} : a,b \in R}$. Then since $G^{(1)}_{k}(a)$ is refined to its orbits by the $k$-dim WL method, equations \ref{eq:pairs} above imply that
 \begin{align}
  (\textrm{WL}_{k+1}(u),\textrm{WL}_{k+1}(v)) &= (\textrm{WL}_{k+1}(x),\textrm{WL}_{k+1}(y)), \textrm{ and} \notag\\
  \textrm{WL}_{k+1}(S_{k+1}(u,v)) &= \textrm{WL}_{k+1}(S_{k+1}(x,y),
 \end{align}
 and hence that $\textrm{WL}_{k+1}((u,v)) = \textrm{WL}_{k+1}((x,y))$.
 Hence the colour of ordered 2-tuples in $\textrm{WL}_{k+1}$ is equal only if they belong to the same orbit of $V \times V$ in $G$.
\end{proof}

Combining the results of Theorem \ref{thm:kdimCFI}, Lemma \ref{thm:k+1} and the observations regarding the properties of $\textrm{WL}_k(\textrm{CFI}(G))$ at the start of this subsection, we can now return to Theorem \ref{thm:cfi(x)} introduced at the beginning of Chapter \ref{sec:ext}.

\begin{theorem}
 If the recursive $k$-dim WL method succeeds in characterising the graph $G$, then the recursive $(k+1)$-dim WL method succeeds in characterising the graph CFI($G$).
\end{theorem}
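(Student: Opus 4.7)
The proof will combine Theorem \ref{thm:kdimCFI} (which lifts vertex colourings of $G$ to gadget colourings of $\textrm{CFI}(G)$) with Lemma \ref{thm:k+1} (which upgrades refinement of vertices by recursive $\textrm{WL}_k$ to refinement of ordered $2$-tuples by recursive $\textrm{WL}_{k+1}$). The strategy is to set up a step-by-step correspondence between an individualisation in the recursive procedure on $\textrm{CFI}(G)$ and an ordered-pair individualisation in $G$, and then to induct on the depth of the recursion using the hypothesis that recursive $\textrm{WL}_k$ characterises $G$.

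First I would examine the initial application of $\textrm{WL}_{k+1}$ to $\textrm{CFI}(G)$. By Theorem \ref{thm:kdimCFI} and the structural observations at the start of Subsection \ref{subsec:CFI}, this partitions $V(\textrm{CFI}(G))$ into colour classes in which (i) each class is contained in a single gadget $\textrm{CFI}(v)$; (ii) gadgets whose underlying vertices have distinct $\textrm{WL}_{k+1}$ colours in $G$ lie in disjoint classes; and (iii) within each $\textrm{CFI}(v)$ the classes refine the partition $\{M(v)\}\cup\{\{a_{v,i},b_{v,i}\}:i\in d(v)\}$, further grouped according to the $\textrm{WL}_{k+1}$ colour of the neighbour $u$ associated with index $i$. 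The classes in (iii) are precisely the orbits of the pointwise stabiliser in $\textrm{Aut}(\textrm{CFI}(G))$ of all vertices outside $\textrm{CFI}(v)$, so no further in-gadget refinement is possible without individualisation.

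Next I would analyse the effect of a single individualisation step. Without loss of generality the recursive procedure picks some non-middle vertex $a_{v,i}\in \textrm{CFI}(v)$, since the middle vertices of a fixed gadget are mutually interchangeable and individualising one of them yields no useful refinement beyond splitting $M(v)$. Individualising $a_{v,i}$ distinguishes it from $b_{v,i}$; by the defining edge relation $\{m_S,a_i\}\in E(\textrm{CFI}(v))\Leftrightarrow i\in S$ this at once splits every $m_S\in M(v)$ according to the parity of membership of $i$ in $S$, and by the connecting edge between gadgets it distinguishes the matching pair $a_{u,v},b_{u,v}$ in the neighbouring gadget $\textrm{CFI}(u)$, where $u$ is the neighbour of $v$ corresponding to index $i$. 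Via Theorem \ref{thm:kdimCFI} applied in this partially coloured setting, the resulting refinement on $\textrm{CFI}(G)$ corresponds to the refinement that $\textrm{WL}_{k+1}$ on $G$ produces after individualising the ordered pair $(v,u)$. By Lemma \ref{thm:k+1} this realises the orbits of the pointwise stabiliser of $(v,u)$ in $\textrm{Aut}(G)$, which by the recursive $k$-dim WL hypothesis is computable by the recursive procedure on $G$.

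The proof then concludes by induction on the number of individualisation steps: each step in $\textrm{CFI}(G)$ mirrors an ordered-pair individualisation in $G$ (the free binary choice of $a$ versus $b$ inside a gadget is exactly what picks out the direction of the pair in $G$), and by hypothesis the recursive $k$-dim WL on $G$ produces the orbit partition of $V(G)$, so the lifted procedure produces the orbit partition of $V(\textrm{CFI}(G))$ under $\textrm{Aut}(\textrm{CFI}(G))$. The hard part will be the reduction step in the third paragraph: one must verify that Theorem \ref{thm:kdimCFI} continues to apply after partial individualisation (when some vertex colours have been refined by earlier steps of the recursion), and that the propagation of the individualisation of $a_{v,i}$ through $M(v)$ and across the connecting edge to $\textrm{CFI}(u)$ is complete enough that no ``extra'' symmetries remain in $\textrm{CFI}(G)$ beyond those corresponding to the stabiliser of $(v,u)$ in $\textrm{Aut}(G)$. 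Handling the coloured version of Theorem \ref{thm:kdimCFI} cleanly, without circular reasoning, is the principal technical obstacle.
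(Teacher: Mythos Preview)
Your proposal uses the correct ingredients (Theorem~\ref{thm:kdimCFI} and Lemma~\ref{thm:k+1}) and heads toward the right conclusion, but it differs from the paper's argument and contains one factual slip and one structural complication worth flagging.

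First, claim (i) in your second paragraph is false as written: colour classes under $\textrm{WL}_{k+1}$ are \emph{not} confined to single gadgets. If $u,v\in V(G)$ receive the same $\textrm{WL}_{k+1}$ colour in $G$, then by Theorem~\ref{thm:kdimCFI} the gadgets $\textrm{CFI}(u)$ and $\textrm{CFI}(v)$ carry identical colour multisets, so for instance all of $M(u)\cup M(v)$ lies in one class. Your claim (ii) is the correct statement; (i) should simply be deleted.

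Second, the paper does \emph{not} carry out the step-by-step mirroring of individualisations that you set up in paragraphs three and four. Its argument is a single global reduction: the orbit of a middle vertex $m_S\in M(v)$ under $\textrm{Aut}(\textrm{CFI}(G))$ depends only on the orbit of $v$ in $\textrm{Aut}(G)$, whereas the orbit of an outer vertex $a_{u,v}$ (or $b_{u,v}$) depends on the orbit of the ordered pair $(u,v)$ in $\textrm{Aut}(G)$ acting on $V(G)^2$. Hence refining $\textrm{CFI}(G)$ to its orbits reduces to refining the ordered $2$-tuples of $G$ to their orbits, and Lemma~\ref{thm:k+1} supplies exactly that for the recursive $(k{+}1)$-dim WL. No induction on individualisation depth, and no notion of ``ordered-pair individualisation in $G$'', is introduced.

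This matters because your invocation of Lemma~\ref{thm:k+1} in paragraph three does not quite match what the lemma says. The lemma asserts that recursive $\textrm{WL}_{k+1}$ on $G$ with \emph{single-vertex} individualisations refines ordered $2$-tuples to their orbits at each step; it does not speak to what happens after one ``individualises the ordered pair $(v,u)$'' in $G$. Bridging that gap is possible, but it is precisely the extra work that the paper's global orbit characterisation avoids. You rightly identify this as the principal technical obstacle in your final paragraph; the paper's route removes it rather than confronting it.
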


\begin{proof}
 Assume $\exists \, u,v \in V(G)$, such that $\textrm{WL}_k(u) \neq \textrm{WL}_k(v)$. Then by Theorem \ref{thm:kdimCFI} for any $r,s \in V(\textrm{CFI}(G)), \lambda^{-1}(r) = u, \lambda^{-1}(s) = v$, it follows that $\textrm{WL}_k(r) \neq \textrm{WL}_k(s)$. Furthermore the graph induced on any given gadget of CFI($G$) is itself refined to its orbits (in that the $A$/$B$ subsets are separated from the $M$ subset in non-trivial cases). For a given $v \in V(G)$, the orbits of the set of central vertices $M(v) \subset \textrm{ CFI}(v)$ depend only on the orbit of the vertex  $v$ in Aut($G$). Hence the central $M$ vertices of CFI($G$) are refined to their orbits by the $k$-dim WL method, and hence by the $(k+1)$-dim WL method. However the $A$ and $B$ vertices of each gadget CFI($v$) correspond to di-edges of $G$, in that the orbit of a particular vertex $a_{u,v}$ depends on the orbit of the ordered 2-tuple $(u,v)$ in Aut($G^2$). Explicitly, $\exists \, \phi \in \textrm{ Aut(CFI}(G)), \phi : a_{u,v} \mapsto a_{x,y}$ if and only if $\exists \, \gamma \in \textrm{ Aut}(G^2), \gamma : (u,v) \mapsto (x,y)$.

 Now as the set of colour classes from the WL method is a graph invariant, and so cannot refine further than the orbit partition, it follows from the previous argument that the $n$-dim. WL method refines CFI($G$) to its orbits if it refines the ordered 2-tuples of $G$ to their orbits. Hence the result follows from Lemma \ref{thm:k+1}.
\end{proof}

\begin{corollary}
 If $G$ can be characterised by the recursive $k$-dim WL method, then the graph $\Lambda^{i} (G)$, in which $\Lambda$ is applied $i$ times to $G$, can be characterised by the $(k+1)$-dim WL method.
\end{corollary}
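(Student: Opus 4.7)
I would induct on $i$. The case $i = 0$ is immediate because the recursive $k$-dim WL is at least as refining as the recursive $(k+1)$-dim WL, and the case $i = 1$ is exactly Theorem \ref{thm:cfi(x)}. The substantive content lies in the inductive step: setting $H := \Lambda^{i}(G)$ and assuming $H$ is characterised by the recursive $(k+1)$-dim WL, I must show that $\textrm{CFI}(H) = \Lambda^{i+1}(G)$ is as well, at the same dimension. A naive reapplication of Theorem \ref{thm:cfi(x)} with $H$ in place of $G$ would invoke Lemma \ref{thm:k+1} a second time and push the bound from $(k+1)$ up to $(k+2)$; the whole point of the corollary is that this second invocation is unnecessary, so what I actually need to transport into the inductive step is not Lemma \ref{thm:k+1} itself but only its output at the correct dimension.

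The strategy is to transcribe the proof of Theorem \ref{thm:cfi(x)} with $H$ playing the role of $G$. The structural ingredients (Theorem \ref{thm:kdimCFI}, Lemmas \ref{lem:WL1}-\ref{lem:WL2} and Corollary \ref{cor:iso}) are properties of the CFI construction itself and transfer verbatim to $\textrm{CFI}(H)$. They give: recursive $(k+1)$-dim WL separates gadgets of $\textrm{CFI}(H)$ exactly when it separates vertices of $H$; it refines the middle vertices $M(v) \subset \textrm{CFI}(v)$ as soon as the vertex orbits of $H$ are known; and it refines the $A/B$-vertices as soon as the ordered 2-tuples of $H$ are refined to their orbits. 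The first two conditions follow immediately from the inductive hypothesis, so the only step that threatens to cost an extra dimension is the last.

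To keep the uniform bound at $(k+1)$, I would supply the 2-tuple refinement of $H$ directly from the recursive framework rather than by quoting Lemma \ref{thm:k+1} again. The key observation is that once a graph is characterised by recursive $m$-dim WL, its ordered $t$-tuples are automatically refined to orbits of the pointwise stabiliser at the same dimension $m$: given any $(t-1)$-tuple, individualise its entries in turn and rerun the recursion, each phase of which already terminates at orbits of the current stabiliser. Intuitively the ``$+1$'' in Lemma \ref{thm:k+1} is the price of refining 2-tuples by \emph{plain} WL without individualisation; inside the recursive method this price is paid only once, at the base step $i = 1$, and need not be paid again at any later stage. Plugging this recursive $(k+1)$-dim 2-tuple refinement of $H$ into the Theorem \ref{thm:cfi(x)} template then characterises $\textrm{CFI}(H)$ at dimension $(k+1)$ and closes the induction.

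The step I expect to be most delicate is making the ``individualisation is free'' observation airtight enough that it does not leak a hidden dimension inflation as $i$ grows. A clean way to handle it would be a small intermediate lemma stating that if $H$ is characterised by recursive $m$-dim WL then for every $v \in V(H)$ the pointwise-stabilised graph $H_v$ is also characterised by recursive $m$-dim WL, and hence iterating gives refinement of ordered $t$-tuples at dimension $m$ for every $t$. With such a lemma the inductive step transcribes cleanly from Theorem \ref{thm:cfi(x)}; without it, the argument risks reverting to the Lemma \ref{thm:k+1}-based route that yields the growing bound $(k+i)$ instead of the stated $(k+1)$ conclusion, so this is exactly where I would concentrate the effort.
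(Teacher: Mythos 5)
Your induction on $i$ diverges from the paper's argument, and the place where you yourself locate the difficulty is a genuine gap, not just a delicate step. The paper's proof is not an induction that re-runs Theorem \ref{thm:cfi(x)} at each level; it is a single structural observation: applying $\Lambda$ repeatedly does not compound the tuple-arity, because the automorphism group of $\Lambda^{i}(G)$ still involves only automorphisms of at-worst ordered $2$-tuples of the \emph{original} graph $G$ (a di-edge of $\Lambda^{i}(G)$ lies either inside a gadget or between adjacent gadgets, and so projects through all $i$ levels down to a vertex or di-edge of $G$ plus canonical local data). Consequently one invocation of Lemma \ref{thm:k+1}, applied to $G$ at dimension $k$, already supplies all the orbit information that the $(k+1)$-dim WL method needs at every level of the iteration. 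Your proposal never engages with this fact; instead it tries to make the inductive step dimension-free via the claim that a graph characterised by the recursive $m$-dim WL method automatically has its ordered $2$-tuples refined to orbits at dimension $m$.

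That claim is false in the form your argument requires. What is true is that recursive individualisation gives a \emph{procedure} for computing orbits of $2$-tuples of $H$; what the Theorem \ref{thm:cfi(x)} template actually consumes (via Theorem \ref{thm:kdimCFI} and the correspondence between the orbit of $a_{u,v}$ and the orbit of $(u,v)$) is that the WL \emph{colouring} of the un-individualised graph $\textrm{CFI}(H)$ already separates the $A/B$ vertices according to orbits of di-edges of $H$ --- this is needed at the very first step of the recursion on $\textrm{CFI}(H)$, before any vertex has been individualised, so no ``free'' individualisation of a vertex of $H$ is available there. Converting the individualise-then-refine procedure into a static colouring is exactly what costs one dimension; that is the entire content of Lemma \ref{thm:k+1}, and it is why the paper needs the $+1$ in Theorem \ref{thm:cfi(x)} in the first place. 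Indeed, if your intermediate lemma held, Theorem \ref{thm:cfi(x)} would hold with $k$ in place of $k+1$, contradicting the paper's own discussion immediately after that theorem. A concrete counterexample to the principle: the cycle $C_n$ is characterised by the recursive $1$-dim WL method, yet the $1$-dim WL colouring does not refine its ordered $2$-tuples to their orbits (the distance classes). To close your induction you would end up needing precisely the statement that the di-edge orbits of $\Lambda^{i}(G)$ are determined by $2$-tuple orbits of $G$ itself --- which is the paper's one-line proof.
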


\begin{proof}
 Follows from the proof of \ref{thm:cfi(x)}. In particular, note that applying $\Lambda$ $i$ times to $G$ still results in a graph in which vertices represent at worst di-edges of $G$, in that the automorphism group of $\Lambda^{i} (G)$ does not involve automorphisms of $t$-tuples of $G$ for $t > 2$.
\end{proof}

Note that whereas the recursive $k$-dim WL method is sufficient to characterise $G$, the $(k+1)$-dim WL method is required to characterise the extension CFI($G$). This is a direct result of there being vertices in CFI($G$) that directly represent di-edges, or ordered \emph{pairs} of vertices, in $G$. Similarly, if a graph extension was constructed that contained vertices representing 3-tuples of $G$, the $(k+2)$-dim WL method would be required in a proof proceeding as above. There are some fairly contrived possibilities for getting around this requirement, for instance by altering the recursive WL method, restricting the vertex individualised at each step to belong to the set $M_u$ for some $u \in V(G)$. Since these central vertices `encode' only a single vertex in $G$, unlike the $A$ and $B$ sets they will necessarily be refined to their relative orbits by the $k$-dim WL method. Alternatively, the recursive WL method (when acting on a graph of the type CFI($G$)) could be restricted to individualising an entire gadget at each step. These alternative method require foreknowledge of the graph type of interest however, and as such are of less interest to a discussion on possible general graph isomorphism algorithms.

Requiring an extension to the $(k+1)$-dim method may seem prohibitive from an efficiency viewpoint, in that extensions similar to those in \cite{CFI} and \cite{Evdokimov99}, in which vertices are present whose orbit depends on the orbit of some $t$-tuple in the original graph, for $t > 2$, might easily be produced. A method involving such $t$-tuples in an unbiased way, in the sense of definition \ref{def:unbiased}, might be expected to develop alternative weaknesses with growing $t$ however.
For the moment such potential extensions are beyond the scope of this work, however they do represent a potentially promising direction in which to look for $k$-equivalent graphs for which the arguments of this chapter do not apply.

\subsection{$K(G)$ Counterexamples}

Here we consider the $k$-equivalent pairs $K(G)$ and $K'(G)$, given a cubic graph $G$. The analysis of these counterexamples is greatly simplified due to the following properties:

Let $\mathcal{C} = (V,\mathcal{R})$ be the coherent configuration associated with $K(G)$, and $\textrm{Aut}(\mathcal{C})$ be the automorphism group of $\mathcal{C}$, where $\textrm{Aut}(\mathcal{C}_{i,j})$ is the automorphism group of the induced coherent configuration on $V_i \times V_j$. Then, from the results of \cite{Evdokimov99} (Lemma 5.3),
\begin{equation}
 |\textrm{Aut}(\mathcal{C}_{i,j})| = \left\{
 \begin{array}{rl}
  4 & \textrm{if } i=j\\
  8 & \textrm{if } (i,j) \in E(G)\\
  16 & \textrm{otherwise}.
 \end{array} \right.
\end{equation}

In particular, note that no fibres span more than one $V_i$, and the orbits of $V$ are simply the sets $V_i, \;\forall i \in V(G)$. Further, let $\mathcal{C}^*$ be the coherent configuration resulting from individualising a single point of $\mathcal{C}$, where $K(G)^*$ is the associated graph.

\begin{corollary}
 $|\textrm{Aut}(\mathcal{C}^*)| = 1$; no non-trivial automorphisms exist for $\mathcal{C}^*$.
\end{corollary}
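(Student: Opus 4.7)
The plan is to let $\phi \in \textrm{Aut}(\mathcal{C}^*)$ be arbitrary and to show that $\phi$ must act as the identity on every fibre, working outward from the individualised point $x$.

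Since $\phi$ fixes $x \in V_i$ and $\textrm{Aut}(\mathcal{C}_{V_i}) \cong K_4$ acts regularly on the four points of $V_i$, the point stabiliser is trivial and hence $\phi|_{V_i} = \textrm{id}$. Then for each neighbour $j \in d(i)$, the stated order $|\textrm{Aut}(\mathcal{C}_{i,j})| = 8$ together with orbit--stabiliser (the Klein action being transitive on $V_i$) gives that the subgroup of $\textrm{Aut}(\mathcal{C}_{i,j})$ fixing $V_i$ pointwise has order $8/4 = 2$. Thus $\phi|_{V_j}$ is immediately forced into the two-element subgroup $\{\textrm{id}, \sigma_{c(j,i)}\} \subset K_4$, corresponding to the fact that the only remaining freedom is to swap points within $\langle c(j,i) \rangle$ and within $\overline{\langle c(j,i) \rangle}$ simultaneously.

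To eliminate this residual freedom I would use the cubic structure of $G$ together with the fact that the colouring $c(j, \cdot) : d(j) \to \{1,2,3\}$ is a bijection at each vertex $j$. The three edges at $j$ test the three distinct non-trivial characters $\chi_1, \chi_2, \chi_3 : K_4 \to \mathbb{F}_2$, whose joint evaluation embeds $K_4$ into $\mathbb{F}_2^3$. Hence once $\phi$ has been shown to act trivially on every neighbouring fibre $V_l$ for $l \in d(j)$, the three edge-constraints at $j$ pin down all three characters of $g_j$ and therefore $g_j = 0$, giving $\phi|_{V_j} = \textrm{id}$. The argument then proceeds by induction on distance from $i$ in $G$, with the coherent-closure structure of $\mathcal{C}^*$ transferring the rigidity established on already-fixed fibres to the next frontier.

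The main obstacle I anticipate is closing this induction: a frontier vertex $j$ typically has only one or two neighbours whose action is already known to be trivial, so only a proper subset of the three character constraints at $j$ is directly usable. I expect the missing constraints to be supplied by the coherent closure itself — products $R_a \cdot R_b$ of basis relations along two-step paths $i \to j' \to j$ generate further basis relations in $\mathcal{C}^*$ whose preservation yields the remaining character information. Together with the previously noted fact that the orbits of $V$ under $\textrm{Aut}(\mathcal{C})$ are precisely the fibres $V_i$, so that the Klein action on each fibre is the only freedom that needs to be killed, this should suffice to close the induction and conclude that $\phi$ is the identity throughout, giving $|\textrm{Aut}(\mathcal{C}^*)| = 1$.
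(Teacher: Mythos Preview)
The paper gives no proof of this corollary; it is asserted as an immediate consequence of the fibre--automorphism counts quoted from \cite{Evdokimov99}, and the subsequent proof of Theorem~\ref{thm:kleinWL} essentially re-asserts the same claim in WL language (``$\textrm{WL}^{n}_1$ assigns different colours to each vertex of $V_j$'') without further justification. So your inductive, character-based reformulation is already more detailed than what the paper does, and your identification of the obstacle is exactly right: a single processed neighbour pins only one of the three nontrivial characters of $g_j \in K_4$, leaving a residual $\mathbb{Z}_2$.

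The gap is that your proposed fix does not close. Products of basis relations along a two-step path $i \to j' \to j$ yield the constraint $\chi_{c(j,j')}(g_j) = \chi_{c(j',j)}(g_{j'})$, which is still coupled to the unknown $g_{j'}$; the coherent closure does not manufacture a constraint on $g_j$ that is independent of the unprocessed fibres. In fact the obstacle is not an artefact of the proof strategy but of the statement itself: for $G = K_4$ with the symmetric labelling $c(1,2)=c(2,1)=1$, $c(1,3)=c(3,2)=c(2,3)=2$, $c(1,4)=c(2,4)=c(3,4)=c(4,3)=3$, $c(3,1)=c(4,1)=1$, $c(4,2)=2$, one checks directly that $(g_1,g_2,g_3,g_4) = (\mathrm{id},\sigma_1,\sigma_1,\sigma_1)$ preserves every basis relation while fixing the point $0 \in V_1$. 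Thus $|\mathrm{Aut}(\mathcal{C}^*)| \ge 2$ here, and neither your induction nor the paper's WL propagation can terminate in the discrete partition. The corollary (and the corresponding step in the proof of Theorem~\ref{thm:kleinWL}) appears to require an additional hypothesis on $G$ or on the labelling $c$ --- plausibly satisfied in the large-separator regime that is actually used later, but not in general.
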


Consider the following adaption of the $1$-dim WL method, accepting an edge-coloured graph as input. Denote the colour of the edge $(u,v) \in E(K(G))$ by $C(u,v)$, and let
\begin{align}
 \textrm{WL}^{t}_1(u) = \langle \,\textrm{WL}^{t-1}_1(u), \textrm{ Sort}\{ (\textrm{WL}^{t-1}_1(v), &C(u,v)) : v \in d(u)\},  \notag\\
 &\textrm{ Sort}\{(\textrm{WL}^{t-1}_1(v) : v \in e(u)\} \:\rangle.
\end{align}

Theorem \ref{thm:kleinWL} follows immediately.

\begin{reptheorem}{thm:kleinWL}
 \emph{The recursive $1$-dim WL method succeeds in characterising the graph $K(G)$, associated with a Klein scheme of $G$.}
\end{reptheorem}

\begin{proof}
 Let $u,v \in V(K(G))$, where $u \in V_i, v \in V_j$ for some $i \neq j$. If follows that $u$ and $v$ have incident edge-colour sets that do not coincide. Then $\textrm{WL}^{1}_1(u) \neq \textrm{WL}^{1}_1(v)$. Hence the $1$-dim WL method described above refines $K(G)$ to its orbits.

 Let $u \in V_i$ be the vertex of $K(G)$ individualised in $K(G)*$. The remainder of $V_i$ are connected to $u$ each via edges of a different colour, hence for any $x,y \in V_i$, $\textrm{WL}^{1}_1$ assigns different colours to $x,y$. Similarly, consider $V_j$ such that $i$ and $j$ are at distance $n$ in $G$. Then $\textrm{WL}^{n}_1$ assigns different colours to each vertex of $V_j$. Hence $K(G)*$ is refined to its orbits (the discrete partition) by $\textrm{WL}_1$.
\end{proof}

The ease of proving this result compared to the corresponding result regarding the CFI counterexamples stems from each `gadget' in $K(G)$ being essentially assigned a unique colour (explicitly in \cite{Evdokimov99}, implicitly here via the the unique colouring of each di-edge of $G$). This distinction between di-edge colouring could be removed, with the $\mathcal{R}_{i,j}$ basis relations of Chapter \ref{sec:schemes} merged, such that the sets of basis relations
\begin{align}
 \mathcal{S}_x &= \hspace{-0.4cm} \bigcup_{\{i,j : (i,j) \in E(G)\}} \hspace{-0.6cm} R_x, \quad\quad x \in \{1,2\}, \; \textrm{ and}\\
 \mathcal{T} &= \hspace{-0.4cm} \bigcup_{\{i,j : (i,j) \notin E(G)\}} \hspace{-0.6cm} \mathcal{R}_{i,j}\\
\end{align}
are each merged into a single basis relation, forming three distinct subsets of $\mathcal{C}$ \footnote{Explicitly, leaving the three sets of relations; $\mathcal{S}_1$, $\mathcal{S}_2$ and $\mathcal{T}$}, and with the basis relations of all individual $V_i$ being merged into three basis relations similarly.

We will state without proving the following proposition (which follows from the results of \cite{Evdokimov99}).
\begin{proposition}
 Merging the basis relations as detailed above preserves the properties of $k$-similarity and non-isomorphism between $\mathcal{C}$ and $\mathcal{C}'$.
\end{proposition}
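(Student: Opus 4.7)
The plan is to treat the $k$-similarity and non-isomorphism claims separately, both exploiting the $\psi_i$-equivariance of the merging operation.

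Preservation of $k$-similarity: I would first observe that the merging is compatible with the weak isomorphism $\psi_i$ of Theorem \ref{thm:Klein}. Since $\psi_i$ swaps $R_1 \leftrightarrow R_2$ within each $\mathcal{R}_{i,j}$ for $j \in d(i)$ and fixes basis relations in non-incident blocks, the three merged classes $\mathcal{S}_1, \mathcal{S}_2, \mathcal{T}$ are setwise permuted amongst themselves (with the pair $\{\mathcal{S}_1, \mathcal{S}_2\}$ invariant). A direct computation then shows that the intersection numbers of the merged structure arise as sums of the original intersection numbers and that they are matched by the induced bijection on merged classes across $\widetilde{\mathcal{C}}$ and $\widetilde{\mathcal{C}'}$. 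The same descent applies to the $k$-fold extensions $\widetilde{\mathcal{C}}^{(k)}$ and $(\widetilde{\mathcal{C}'})^{(k)}$, so the weak isomorphism witnessing $k$-similarity of $\mathcal{C}, \mathcal{C}'$ restricts to one witnessing $k$-similarity of the merged configurations.

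Preservation of non-isomorphism: I would argue by contradiction. Suppose $\phi : \widetilde{\mathcal{C}} \to \widetilde{\mathcal{C}'}$ is a strong isomorphism. The crucial claim is that the coherent closure $[\widetilde{\mathcal{C}}]$ recovers $\mathcal{C}$ itself. Granted this, $\phi$ lifts canonically to a strong isomorphism of the closures $\mathcal{C}$ and $\mathcal{C}'$, contradicting Theorem \ref{thm:Klein}. To establish the closure claim I would show: (i) the individual fibres $V_i$ reappear as cells in the closure, since within each $V_i$ the Klein 4-structure spanned by $E_1, E_2, E_3$ is forced by the three merged intra-fibre classes together with their intersection numbers; and (ii) within each edge-block $V_i \times V_j$, $\{i,j\} \in E(G)$, the original partition $\{R_1, R_2\}$ is recoverable from the merged relation by the characteristic Klein-product structure \eqref{eq:R12}, whose product with the intra-fibre relations discriminates the two halves.

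The main obstacle will be verifying that the closure of $\widetilde{\mathcal{C}}$ is no coarser than $\mathcal{C}$. This is delicate because the merging deliberately discards the per-edge labelling of basis relations, and one must show combinatorially that the closure process reconstructs that labelling. I would lean on the rigidity supplied by the automorphism-group bounds $|\textrm{Aut}(\mathcal{C}_{i,j})| \in \{4, 8, 16\}$ from \cite{Evdokimov99}, together with the assumption that $G$ has sufficiently large separator size, which prevents the extra symmetries that could collapse the closure into a coarser configuration. Any degenerate cases (small or overly symmetric $G$) in which this rigidity fails would need to be excluded or handled by direct inspection.
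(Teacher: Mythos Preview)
The paper does not actually prove this proposition: it is explicitly ``state[d] without proving'', with a pointer to the results of \cite{Evdokimov99}. So there is nothing in the paper to compare your argument against, and any correct argument you give goes strictly beyond what the paper does.

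That said, your proposal for the $k$-similarity half contains a genuine error. You assert that $\psi_i$ setwise permutes the merged classes $\mathcal{S}_1,\mathcal{S}_2,\mathcal{T}$. It does not. By your own (correct) description, $\psi_i$ swaps $R_1\leftrightarrow R_2$ only on the three blocks $\mathcal{R}_{i,j}$ with $j\in d(i)$ and fixes the $R_1,R_2$ in every other edge-block. Hence $\psi_i(\mathcal{S}_1)$ is a mixture consisting of $R_2$'s on the blocks incident to $i$ and $R_1$'s elsewhere, which is neither $\mathcal{S}_1$ nor $\mathcal{S}_2$ as soon as $G$ has an edge not incident to $i$. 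So $\psi_i$ does not descend to a bijection of the merged relation classes, and the rest of that paragraph does not go through.

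The clean route for $k$-similarity is monotonicity: merging colours is a coarsening of the initial colouring of the associated edge-coloured graphs, and WL$_k$ applied to a coarsened input can never separate a pair that it fails to separate on the finer input. Equivalently, the cellular closure of the merged structure refines to (a quotient of) $\mathcal{C}$, so the $k$-extension of the merged configuration is a quotient of $\widehat{\mathcal{C}}^{(k)}$; any weak $k$-isomorphism between $\mathcal{C}$ and $\mathcal{C}'$ then induces one between the merged versions. This avoids any need for $\psi_i$ to act nicely on the merged classes.

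Your non-isomorphism argument via coherent closure is the right idea and is essentially what an appeal to \cite{Evdokimov99} would unpack to: one shows $[\widetilde{\mathcal{C}}]=\mathcal{C}$ (the fibres and the per-edge $\{R_1,R_2\}$ splitting are recovered by stabilisation), whence any strong isomorphism of the merged pair lifts to one of $\mathcal{C},\mathcal{C}'$, contradicting Theorem~\ref{thm:Klein}. Your identification of the delicate step --- that the closure is no coarser than $\mathcal{C}$ --- is accurate; note, however, that the separator-size hypothesis is irrelevant here (it governs $k$-similarity, not the closure computation), so you should not expect to lean on it for this part.
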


Indeed, the automorphism group of the resulting coherent configuration has orbits with the following properties. Vertices $u,v$ are in the same orbit if they belong to a single $V_i$. The vertices of $V_i$ and $V_j$ belong to the same orbit if and only if $i,j$ are in the same orbit of $G$.

Hence the question of whether $\textrm{WL}_k$ refines $K(G)$ to its orbits reduces to a similar problem as that of the previous section. A similar proof can be constructed, with one important distinction. Since the vertices of each $V_i$ correspond to a single vertex of $G$, rather than a $2$-tuple of $G$ as for CFI($G$), the following result is obtained.

\begin{proposition}
 If the $k$-dim WL method succeeds in refining $G$ to its orbits, then it succeeds in characterising $K(G)$ to its orbits. 
\end{proposition}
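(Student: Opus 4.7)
The plan is to mimic the proof of Theorem \ref{thm:cfi(x)} (and its supporting Theorem \ref{thm:kdimCFI}), exploiting the structural simplification, highlighted in the paragraph immediately preceding the proposition, that each fibre $V_i$ of the merged Klein scheme corresponds to a single vertex $i \in V(G)$ rather than to an ordered pair of vertices. This is precisely the feature whose absence forced a $k \mapsto k+1$ shift in the CFI setting; here we expect to get away with the $k$-dim WL method directly.

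First I would establish a Klein-scheme analogue of Theorem \ref{thm:kdimCFI}: for any $u, v \in V(G)$,
\begin{align*}
\textrm{WL}_k(u) \neq \textrm{WL}_k(v) \quad \textrm{only if} \quad \textrm{Sort}[\textrm{WL}_k(V_u)] \neq \textrm{Sort}[\textrm{WL}_k(V_v)],
\end{align*}
where the colourings on the right-hand side are computed in $K(G)$. As in the CFI argument, this is proved by induction on the number of WL refinement steps. The inductive step uses that if $\lambda^{-1}(x) = u$, $\lambda^{-1}(y) = v$, $\lambda^{-1}(z) = w$, and the pairs $(u,v)$ and $(u,w)$ receive different initial isomorphism-type colourings in $G$, then $(x,y)$ and $(x,z)$ receive different initial colourings in $K(G)$. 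After the merging of basis relations, the required input is essentially the first and third conditions of Definition \ref{def:unbiased}: edges between $V_u$ and $V_v$ exist iff $\{u,v\} \in E(G)$, so edge/non-edge status in $K(G)$ transparently encodes edge/non-edge status in $G$.

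Next I would handle the internal refinement of each fibre. Since the orbits of $\textrm{Aut}(K(G))$ group all vertices of $V_i$ together, and the WL colouring is always coarser than or equal to the orbit partition, $\textrm{WL}_k$ cannot split any $V_i$. Combining the two steps: two vertices $x \in V_i$ and $y \in V_j$ receive the same $\textrm{WL}_k$ colour in $K(G)$ only if $i$ and $j$ receive the same $\textrm{WL}_k$ colour in $G$, which by hypothesis is equivalent to $i, j$ lying in the same $\textrm{Aut}(G)$-orbit, which by the description above is equivalent to $V_i, V_j$ lying in the same $\textrm{Aut}(K(G))$-orbit. Hence $\textrm{WL}_k$ refines $K(G)$ to its orbits, and no analogue of Lemma \ref{thm:k+1} is required.

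The main obstacle I anticipate is cleanly verifying the inductive base case after the basis relations have been merged: the two relations $R_1, R_2$ inside each $\mathcal{R}_{i,j}$ (for $\{i,j\} \in E(G)$) are no longer distinguished a priori, so the initial $\textrm{iso}(\cdot)$ colouring of $2$-tuples in $K(G)$ carries strictly less information than in the pre-merge scheme. One must confirm that the remaining data — fibre membership plus edge/non-edge status — still faithfully mirror $\textrm{iso}(u,v)$ in $G$ (i.e., the three cases $u=v$, $\{u,v\}\in E(G)$, $\{u,v\}\notin E(G)$), so that the induction can be launched. Granted the fibre-preserving structure of $K(G)$ and the unmodified adjacency structure across fibres, this reduces to a routine check on the merged scheme.
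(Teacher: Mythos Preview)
Your proposal is correct and follows exactly the approach the paper itself indicates: the paper's proof is simply the remark that the argument proceeds ``along the same lines as that of the previous section, although considerably simpler'' because each fibre $V_i$ corresponds to a single vertex of $G$ rather than a $2$-tuple, obviating the need for Lemma~\ref{thm:k+1} and the $k \mapsto k+1$ shift. You have correctly identified and fleshed out precisely this simplification, including the Klein-scheme analogue of Theorem~\ref{thm:kdimCFI} and the observation that the orbit structure of $K(G)$ is determined fibre-wise by that of $G$.
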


The proof is along the same lines as that of the previous section, although considerably simpler due to the above observation.

\subsection{General graph extensions}

Recall the generalised graph extension $G \rightarrow G'$ defined in \ref{def:unbiased} (relative to an `unbiased' gadget) at the beginning of this chapter. 

\begin{repdefinition}{def:unbiased}
 The extension of a graph $G$ by replacing each vertex $v \in V(G)$ by some type of gadget $h(v)$ will be termed \emph{unbiased} if the resulting graph $G'$ has the following properties:
 \begin{itemize}
  \item Whenever $|d(u)| = |d(v)|$ for some $u,v \in G$, the graphs induced on $h(u)$ and $h(v)$ are isomorphic.
  \item $\forall u,v \in V(G), \exists \, \gamma \in \textrm{ Aut}(G'), \gamma : h(u) \mapsto h(v) \textrm{ iff } \exists \, \varphi \in \textrm{ Aut}(G), \varphi : u \mapsto v,$
  \item For any $x,y \in V(G')$ such that $x \in h(u)$ and $y \in h(v)$, we have $\{x,y\} \in E(G')$ only if $\{u,v\} \in E(G)$.
 \end{itemize}
\end{repdefinition}

Further to this definition of an unbiased gadget, the graph extensions considered will be assumed to have the following property relative to the $k$-dim WL method.

We assume that pairs of gadgets corresponding to neighbouring vertices of $G$ are distinguished from those corresponding to non-edges of $G$.
Note that the graph extensions defined in \cite{CFI} and \cite{Evdokimov99} satisfy this property.

This assumption implies that the colour of a $k$-tuple in $G'$ depends on the isomorphism class of the corresponding $k$-tuple in $G$. Hence by induction, as in Section \ref{subsec:CFI}, the colour of a $k$-tuple in $G'$ also depends on the \emph{colour class} (resulting from the $k$-dim WL method) of the corresponding $k$-tuple in $G$. Hence this implies that the $k$-dim WL method partitions the \emph{gadgets} of $G'$ into the same relative colour classes as it partitions the \emph{vertices} of $G$.

\setcounter{equation}{0}
\section{Orbit Case}
\label{sec:orbit}

Up to this point we have been interpreting the results of \cite{CFI, Evdokimov99} as demonstrating that the $k$-equivalent CFI pairs cannot be \emph{directly} distinguished, in that any method of producing a canonical certificate from each graph using the colour sets resulting from the $k$-dim WL method will yield identical certificates. However we will argue that this does not imply that the WL method (or some variant of this method) cannot be used indirectly to solve GI. In particular, recall that among the problems polynomial-time equivalent to the GI problem is that of determining the orbits of the automorphism group of a given graph. A method that can partition the vertex set of any graph down to its orbits can trivially solve GI by simply refining the graph to its orbits, stabilising some vertex from a given orbit, then accepting the resulting graph as input and repeating until the discrete partition is reached. At this point an explicit non-trivial automorphism of the graph will have been found (if any such exist), and the method can be repeated to find a set of generators for the automorphism group. Alternatively, a method that can partition the vertex set of any graph down to its orbits can simply be directly applied to the union of two graphs to determine if they belong to the same isomorphism class.

Hence the focus of the following chapters will be purely on the equitable partitions resulting from the WL method, rather than the related graph invariant.

\subsection{Counterexamples (orbit case)}
\label{sec:orbit_counter}

If the goal of the WL method is instead considered to be refining the vertex set of a graph down to its orbits, the set of known counterexamples differs, since graphs that cannot be directly distinguished by comparing certificates could still be indirectly distinguished by a recursive WL method, provided that at each step of the recursion the orbits could be successfully found.

Such a recursive $k$-dim WL method would proceed as follows. Apply the $k$-dim WL method to the graph until the equitable partition is reached, at which point this partition is assumed to be an ordered set of orbits of the graph. Without loss of generality, any vertex from the lexicographically smallest (for example) orbit is then stabilised, and this process is repeated until the discrete partition is reached. If at each step the equitable partition corresponds to the orbit set, a canonical ordering of the vertex set is obtained, characterising the graph. Alternatively, by choosing different sets of representative vertices to stabilise at each step, generators for the automorphism group can be efficiently obtained.

The success of this procedure is of course dependent on the ability to discover the orbits at each step. However, we will briefly note that the this method may possibly succeed for graphs where a direct application of the WL method fails, and additionally that both success and failure of this method occur in polynomial time (together with the knowledge of which has occurred\footnote{A simple polynomial time extension to the method can be used to determine generators for the complete automorphism group, which in turn can be efficiently verified.}).

Now the results of \cite{CFI, Evdokimov99} demonstrate that the $k$-equivalent pairs described cannot be distinguished directly, and hence that, for instance, the union of such a pair cannot itself be partitioned down to its orbits using the $k$-dim WL method.
In this trivial case where the graph under consideration is simply the union of two $k$-equivalent graphs, say CFI($G$) and CFI$'$($G$) for some connected graph $G$, this inability to determine the orbits of the combined graph $H = \textrm{CFI}(G) \cup \textrm{CFI}'(G)$ directly can be easily circumvented, under certain assumptions. Provided the original graph $G$ can be refined to its orbits using the $k$-dim WL method, and further that this refinement can be recursively applied (with pointwise stabilisation at each step) to completely characterise the graph (calculating its automorphism group), then in Chapter \ref{sec:ext} we show that CFI($G$) and CFI$'$($G$) can also be separately characterised by recursive application of the $(k+1)$-dim WL method. Then since $H$ can be trivially decomposed into its $k$-equivalent sub-constituents (by separating the disconnected components of either $H$ or $\overline{H}$, depending on the definition of the `union' of graphs), we can apply the recursive WL method to each of the two $k$-equivalent subgraphs individually, determine that these are non-isomorphic graphs, and hence characterise the composite graph $H$.

The above example applies a method for decomposing a graph for which the WL method has been proven to fail into its $k$-equivalent subgraphs.
Of course, such a method cannot be so easily applied in general. Before considering how a generalised decomposition method might proceed, we will consider precisely what types of graphs have been found for which the WL method has been proven (in the work of \cite{CFI, Evdokimov99}) to \emph{not} determine the orbits. The union of two or more $k$-equivalent graphs as considered above is one such graph, albeit a trivial case.

Consider a graph $H$ that can be directly deduced from the results of \cite{CFI, Evdokimov99} to not be partitioned down to its orbits by the $k$-dim WL method. Then trivially, $H$ must contain at least two mutually $k$-equivalent subgraphs, $S_1$ and $S_2$. Any differences in the way these subgraphs are connected to the remainder of the graph, relative to the colour classes resulting from the $k$-dim WL method, may distinguish them (or at least have not been proven not to do so), hence assume no such differences exist. 
Furthermore, consider a single orbit of a given $k$-equivalent graph $S$. Any difference in the connections of the elements of this orbit to the rest of the graph may yield enough information regarding the internal structure of $S$ such that its property of $k$-equivalence is destroyed. Hence such differences will also be assumed to not exist\footnote{Note that in this work we are only concerned with graphs for which the recursive $k$-dim WL method has been proven to fail; we wish to know exactly what counterexamples can be directly constructed from the results of \cite{CFI, Evdokimov99}, hence such differences cannot be allowed without a further extension to this work.}.

If we consider the orbit partition of $H$, in which each cell of the partition is a distinct orbit of Aut($H$), then such a subgraph $S$, essentially a generalisation of a module of a graph, will be termed to be connected \emph{cell-wise symmetrically} (CWS) to the remainder of the graph (relative to the orbit partition in this case), defined as follows.

\begin{definition}
\label{def:CWS}
 Consider an ordered partition $\pi(G) = (V_1, V_2, \ldots, V_r)$ of the graph $G$ into cells (or colour classes), and define $\theta : V(G) \rightarrow \pi(G)$ to determine the cell of a given vertex $v \in V(G)$. Then a subgraph $S \subset G$ is connected \emph{cell-wise symmetrically} (or CWS) within $G$, alternatively termed a \emph{cell-wise symmetric} (or CWS) subgraph of $G$, relative to $\pi(G)$, if:
 \begin{align}
  &\forall\; u,v \in V(S) \textrm{ such that } \theta(u) = \theta(v), \,\textrm{ we have }\, d(u)|_{(G\backslash S)} = d(v)|_{(G\backslash S)}.
 \end{align}
 In other words, any elements of $S$ in the same cell of $\pi(G)$ have identical neighbour sets outside $S$ (in $G\backslash S$).
\end{definition}

\begin{definition}
\label{def:prime}
 A subgraph $S \subseteq G$ will be termed \emph{prime} if it has no proper, non-trivial CWS subgraphs, and is itself a non-trivial CWS subgraphs of $G$. This is defined implicitly with respect to the $k$-dim WL method.
\end{definition}

\begin{example}
 Let $G$ be the $n$-cube, in which the vertices are associated with the related $2^n$ points in $\mathbb{Z}^{n}_2$. Associate with $G$ the partitioning $\pi(G) = (\{x\},V(G) \backslash \{x\})$, where $x \in V(G)$ is the vertex with associated bit-string $(0\ldots0)$. This would be for instance the orbit set resulting from individualising the vertex $x$. Then the subset $S_c \subset V(G)$, $S_c = \{ v \in V(G) : \textrm{dist}(v,x) = c\}$, corresponding to the set of points with fixed Hamming weight $c$, is a CWS partition of $G$ relative to $\pi(G)$.
\end{example}

For the remainder of this paper, we will consider CWS subgraphs to be defined relative to either the orbit partition or the colour classes resulting from the $k$-dim WL method. In the former case such subsets will be referred to as \emph{orbit-wise symmetric (OWS)} subsets, and in the latter case simply as simply CWS subsets, with the `relative to the colour classes resulting from the $k$-dim WL method' specifier dropped for the purposes of brevity.

The notion of CWS subgraphs of a graph can be extended to \emph{relative} connections between mutually $k$-equivalent subgraphs of a graph. In particular, the properties of known counterexample graphs discussed above refer to differences in the relative connections between non-isomorphic $k$-equivalent subgraphs and the remainder of the graph. Before formalising this concept into a definition of \emph{mutual CWS subsets}, it will be instructive to consider a simpler case.

In particular, let $R$ and $S$ be isomorphic, vertex-disjoint, mutually $k$-equivalent subgraphs of a graph $G$. As $R$ and $S$ are mutually $k$-equivalent, $\exists \, \theta : V(R) \rightarrow V(S)$ such that
\begin{align}
\label{eq:equiv_map}
 \textrm{WL}_k(u) \mid_R = \textrm{WL}_k(\theta(u)) \mid_S, \quad \forall \; u \in V(R).
\end{align}
We will assume that $R$ and $S$ are connected CWS within $G$, relative to the WL$_k$ colour classes corresponding to their respective induced graphs, such that for all $u,v \in V(R)$ such that $\textrm{WL}_k(u) \mid_R = \textrm{WL}_k(v) \mid_R$, we have
\begin{align}
 \textrm{WL}_k(u) \mid_G = \textrm{WL}_k(v) \mid_G,
\end{align}
and similarly for $S$.
Further, $R$ and $S$ will have the property that their cell-wise connections to $G \backslash R$ and $G \backslash S$ respectively are equivalent, in the sense that
\begin{align}
 \textrm{WL}_k(d(u)) \mid_G = \textrm{WL}_k(d(\theta(u))) \mid_G, \quad \forall \; u \in V(R),
\end{align}
which in turn implies that
\begin{align}
\label{eq:mutualCWS1}
 \textrm{WL}_k(u) \mid_G = \textrm{WL}_k(\theta(u)) \mid_G, \quad \forall \; u \in V(R).
\end{align}
Note that if (\ref{eq:mutualCWS1}) holds for one such mapping $\theta$ defined as in (\ref{eq:equiv_map}), it holds for all such mappings. In other words, in the terminology of Chapter \ref{sec:schemes}, there exists a weak $k$-automorphism of $G$ that maps $R$ to $S$, in that
\begin{align}
 \textrm{WL}_k(R) \mid_G = \textrm{WL}_k(S) \mid_G.
\end{align}
Finally, we will assume that $R$ and $S$ are the only mutually $m$-equivalent graphs, $m \ge k$, for which the above holds.

We will argue that in this situation, either there exists an automorphism of $G$ that maps $R$ to $S$, or $G$ represents a novel type of $k$-equivalent graph, the existence of which is currently unknown.

Firstly, note that the above properties are trivially consistent with the case where an automorphism of $G$ maps $R$ to $S$, and that in either case $G$ must be $k$-equivalent, in that replacing $S$ with a non-isomorphic mutually $k$-equivalent copy of $S$\footnote{Which exists by the assumption that $S$ is $k$-equivalent} while retaining the mapping $\theta$ results in a graph which is mutually $k$-equivalent to $G$.

In the situation where no such automorphism exists, we note that the graph $G'$ obtained by replacing $R$ and $S$ each by a single vertex\footnote{This process is explicitly defined in Definition \ref{def:contract}} ($v_R and v_S$ respectively), retaining the representative CWS connections outside $R$ and $S$ respectively, is also $k$-equivalent, in that
\begin{align*}
 \textrm{WL}_k(v_R) \mid_{G'} = \textrm{WL}_k(v_S) \mid_{G'},
\end{align*}
however no automorphism of $G'$ maps $v_R$ to $v_S$. Hence we are essentially shifting the $k$-equivalence property of $G$ outside $R$ and $S$.

Now the arguments directly prior to Definition \ref{def:CWS} essentially state that known classes of graphs which are not partitioned to their orbits by the $k$-dim WL method must contain a pair of non-isomorphic graphs with the properties ascribed to $R$ and $S$ above. However since we have assumed that no further such mutually $m$-equivalent graphs exist in $G$, for $m \ge k$, this does not occur, and hence the graph $G$ represents a novel type of $k$-equivalence. 

The above discussion serves to inform the following definition of \emph{mutually CWS} subsets of a graph.
Let $S=\{A_1,A_2,\ldots,A_i\}$ be a set of vertex-disjoint, CWS subgraphs of a graph $G$, such that the elements of $S$ are all pairwise $k$-similar. The graphs $A_x$, where $x \in [i]$, are said to be \emph{mutually CWS} if they have $k$-equivalent connections in $G$ in the following sense.

\begin{definition}
\label{def:mutual_k_equiv}
 Consider the graph $G'$ in which all pairs of non-isomorphic mutually $m$-equivalent subgraphs of $G$ are replaced by \emph{isomorphic} mutually $m$-equivalent graphs of the same $m$-equivalence class, for all $m \ge k$. In the case where the subgraphs $A_x$ are not $k$-equivalent, $S$ is unchanged. It follows that $G' \sim_k G$, however all mutually $m$-equivalent subgraphs of $G'$ now belong to a single isomorphism class. Then elements of $S$ are said to be \emph{mutually CWS} (or alternatively said to be connected CWS to each other) within $G$, if for all $A_x, A_y \in S$, there exists a $\phi \in \textrm{Aut}(G')$ such that $\phi(A_x) = A_y$ (and hence $S$ is in this sense vertex-transitive).
\end{definition}

Note that the definition of mutually CWS subgraphs corresponds to a specific value of $k$, which will be clear from the context where not explicitly stated.

\begin{corollary}
 If $R$ and $S$ are mutually CWS subgraphs of $G$ relative to the $k$-dim WL method, then $R \sim_k S$.
\end{corollary}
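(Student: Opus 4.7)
The plan is to move from $G$ to the intermediate graph $G'$ supplied by Definition \ref{def:mutual_k_equiv}, exploit the existence of an automorphism of $G'$ relating $R$ to $S$, and then transfer the resulting WL-equality back to $G$ via $G' \sim_k G$. The proof is essentially a commutative-diagram argument in three short steps.

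First I would apply Definition \ref{def:mutual_k_equiv} to the pair $\{R,S\}$ to obtain the substituted graph $G'$ together with an automorphism $\phi \in \textrm{Aut}(G')$ satisfying $\phi(R) = S$, where $R$ and $S$ are identified with their natural images in $G'$ under the substitution (which only replaces the internal structure of mutually $m$-equivalent subgraphs with isomorphic copies of the same $m$-equivalence class, for $m \ge k$, and in particular preserves the ambient vertex set). Since $k$-WL colourings are invariant under any graph automorphism, this step yields immediately
\[
 \textrm{WL}_k(R)\mid_{G'} \;=\; \textrm{WL}_k(\phi(R))\mid_{G'} \;=\; \textrm{WL}_k(S)\mid_{G'}.
\]

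Next I would transfer this equality back to $G$ using the relation $G' \sim_k G$ that is built into the construction. Under the canonical identification of vertex sets induced by the substitution, $\textrm{WL}_k$ assigns each vertex the same colour class in $G$ as in $G'$, so in particular for any vertex subset $T$ we have $\textrm{WL}_k(T)\mid_G = \textrm{WL}_k(T)\mid_{G'}$. Applying this to $T=R$ and $T=S$ and chaining with the automorphism-equality above gives
\[
 \textrm{WL}_k(R)\mid_G \;=\; \textrm{WL}_k(R)\mid_{G'} \;=\; \textrm{WL}_k(S)\mid_{G'} \;=\; \textrm{WL}_k(S)\mid_G,
\]
which is precisely the statement $R \sim_k S$.

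The main obstacle is the rigorous justification of the transfer step: the definition only asserts the multiset equality $G \sim_k G'$, but the argument above requires the stronger, pointwise statement that $\textrm{WL}_k(T)\mid_G = \textrm{WL}_k(T)\mid_{G'}$ for the specific subsets $T=R,S$ under the natural identification of vertex sets. The cleanest way I would handle this is by induction on the WL refinement step: at step $0$ the isomorphism-type colouring of each $k$-tuple is preserved because the substitution replaces subgraphs only by copies from the same $k$-equivalence class, while the CWS hypothesis guarantees that all \emph{external} connections of $R$ and $S$ are unaltered; granting that the colourings agree at step $t$, the recursive definition of $\textrm{WL}^{t+1}_k$ shows they must agree at step $t+1$, since the multiset of neighbour-tuple colours contributed from inside a substituted subgraph is determined by the $k$-WL colouring of that subgraph, which was preserved by hypothesis. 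This closes the argument.
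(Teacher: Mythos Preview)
Your argument is correct, but it does far more work than the paper does. In the paper this corollary is stated without proof because it is essentially built into the hypotheses: the sentence introducing Definition~\ref{def:mutual_k_equiv} stipulates that the collection $\{A_1,\ldots,A_i\}$ of mutually CWS subgraphs consists of subgraphs that are already ``pairwise $k$-similar'', and in the paper's convention for subgraphs of a common ambient graph this is exactly the relation $R \sim_k S$ (i.e.\ $\textrm{WL}_k(R)\mid_G = \textrm{WL}_k(S)\mid_G$). So the corollary simply unpacks a standing hypothesis of the definition; the preceding informal discussion around equations~(\ref{eq:equiv_map})--(\ref{eq:mutualCWS1}) is there precisely to motivate why this hypothesis is the right one.

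What your route buys is an independent derivation: you show that even if one took only the automorphism-of-$G'$ clause as the definition of ``mutually CWS'', the $k$-similarity $R\sim_k S$ would still follow, via automorphism-invariance in $G'$ together with a pointwise transfer from $G'$ back to $G$. You correctly flag that the transfer step needs more than the bare multiset statement $G\sim_k G'$, and your inductive justification (the substituted pieces are $k$-equivalent and CWS-connected, so $\textrm{WL}_k^t$ agrees vertex-by-vertex under the natural identification) is the right way to close that gap. This is a genuine argument, just not the one the paper has in mind --- there the conclusion is immediate by fiat.
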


\begin{example}
 Let $S=\{A_1,A_2,\ldots,A_i\}$ be a set of vertex-disjoint, CWS, mutually $k$-equivalent subgraphs of a graph $G$. Then for all $A_x, A_y \in S$, $\exists \; \theta_{x,y} : V(A_x) \rightarrow V(A_y)$, such that $\forall u \in V(A_x)$, 
 \begin{align}
  \textrm{WL}_k(u) \mid_{A_x} = \textrm{WL}_k(\theta(u)) \mid_{A_y}.
 \end{align}
 In particular, for all $x,y \in [i]$ and $u \in V(A_x)$,
 \begin{align*}
  \textrm{Sort}[\textrm{WL}_k(d(u)) \mid_{(G \backslash S)} ] &= \textrm{Sort}[\textrm{WL}_k(d(\theta_{x,y}(u))) \mid_{(G \backslash S)} ], \textrm{ and}\\
  \exists \, A_z \in S \textrm{ s.t. } \textrm{Sort}[\textrm{WL}_k(d(u)) \mid_{A_y} ] &= \textrm{Sort}[\textrm{WL}_k(d(\theta_{x,z}(u))) \mid_{A_x} ].
 \end{align*}
\end{example}

\begin{example}
 A set $S=\{A_1,\ldots,A_i\}$ of CWS subgraphs of $G$ are trivially mutually CWS if the following hold
 \begin{itemize}
  \item[(i)] The $A_x \in S$ are all pairwise mutually $k$-equivalent.
  \item[(ii)] For all $x,y \in [i]$ and $u \in V_x$, $v \in V_y$ such that $u \sim_k v$, we have
  \begin{align}
   d(u) \cap (G \backslash V_x)  = d(v) \cap (G \backslash V_y).
  \end{align}
 \end{itemize}
\end{example}

Note that a graph $G$ containing a CWS $k$-equivalent subgraph $S$ is not necessarily itself $k$-equivalent, in that although the connections between $S$ and $G$ are CWS, they are not necessarily OWS. However several trivial cases, such as a CWS $k$-equivalent subgraph $S$ in which all vertices have identical neighbour sets outside of $S$, can be constructed in which the resulting graph $G$ \emph{is} provably also $k$-equivalent. Hence in addressing the known $k$-equivalent graphs, we will make the following assumption.

\begin{assumption}
 Any graph $G$ that contains a CWS $k$-equivalent subgraph is itself $k$-equivalent.
\end{assumption}

Whilst graphs exist for which this does not hold (in fact several were readily found in the course of this work), this assumption makes the following task of addressing all known counterexample graphs more difficult (in the sense that additional graphs must be considered), and so can be made without weakening the end results.

Following the preceding set of definitions, and the properties of known counterexample graphs discussed above, we now have the notation required to define the following family of graphs.

\begin{definition}
\label{def:M_k}
 The family $M_k$ of graphs is defined as \emph{including} those graphs for which the $k$-dim WL method has been shown to fail to partition the vertex set down to its orbits. and \emph{not} including graphs for which the $(k+1)$-dim WL method has been shown to fail in this sense. All graphs $G \in M_k$ have the following properties:
 \begin{enumerate}
  \item $G$ and $\overline{G}$ are connected.
  \item $G$ contains two non-isomorphic, mutually CWS subgraphs $S_1,S_2$, each connected CWS to $G$ relative to the colour classes of $\textrm{WL}_k(S_i) \mid_{S_i}$.
  \item $G$ contains no pair of non-isomorphic, mutually CWS, $m$-equivalent subgraphs, where $m > k$.
 \end{enumerate}
\end{definition}

Note that if (2) does not hold, and such a mutually CWS pair of subgraphs does not exist, then the $k$-dim WL method will not have been proven to fail to provide the orbits of $G$. Similarly if (3) does not hold, $G \in M_{k+1}$, hence $G \notin M_{k}$. 

Note that the set of graphs $M_k$ is not intended as a complete set of graphs that the $k$-dim WL method fails to refine down to orbits, but instead as including the set of graphs for which this has been previously proven to occur. For instance, it may be possible to construct a graph not containing mutually CWS $k$-equivalent graphs that still cannot be successfully characterised by the recursive $k$-dim WL method.
However such graphs have not been proven to exist, and the primary object of this paper is simply to discuss the possibility that a variant of the WL method might be used to solve GI, not to prove that it actually can.

Note that the set $M_k$ contains those graphs for which the $k$-dim WL method has been proven to fail, in the sense that it cannot partition the vertex set of such a graph down to its orbits. In Chapter \ref{sec:decomposition}, we will detail an algorithm employing the recursive WL method that can characterise these graphs, under certain assumptions, by first applying a decomposition method that isolates the relevant $k$-equivalent subgraphs. These subgraphs are then individually characterised using the standard recursive WL method, at which point the non-isomorphic $k$-equivalent subgraphs are distinguished, and the original graph can be characterised.

\setcounter{equation}{0}
\section{Properties of the WL method}
\label{sec:WLprops}

Before continuing our discussion regarding counterexample graphs, we will first take a brief interlude to establish some basic properties of the $k$-dim WL method. These properties will prove useful in constructing some of the proofs of Chapter \ref{sec:decomposition}. In particular, the relation between the colour classes of $\textrm{WL}_k(G)$ and the CWS subgraphs of $G$ will be explored. Firstly, the CWS closure $cl(S)$ of a subgraph $S \subset G$ is defined as the smallest CWS subset of $G$ containing $S$.

Consider the following method for constructing the CWS closure $cl(\{u,v\})$ of a pair of vertices $u,v \in V(G)$ belonging to the colour class $c$, such that $u \hspace{-0.06cm} \sim_k \hspace{-0.085cm} v$ in $G$, where $\textrm{WL}_k(u) = c$.
\begin{itemize}
 \item[(i)] Begin with $S^0 = \{u,v\}$
 \item[(ii)] For each $x,y \in S^i$ such that $\textrm{WL}_k(x) = \textrm{WL}_k(y)$, find the set \newline $R(x,y) := (d(x) \cap e(y)) \cup (e(x) \cap d(y))$.
 \item[(iii)] Set $S^{i+1} = (\bigcup_{x,y}R(x,y)) \cup S^i$.
 \item[(iv)] When $S^t$ is equitable, such that $S^t = S^{t+1}$, set $cl(\{u,v\}) = S^t$.
\end{itemize}

\begin{lemma}
\label{thm:onewayclosure}
 Let $\textrm{WL}_k(G)$ contain colour classes $c_1$ and $c_2$. If there exists a vertex $v \in [c_2]$ such that $v \in cl([c_1])$, then $[c_2] \subset cl([c_1])$.
\end{lemma}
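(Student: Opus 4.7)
The plan is to prove the stronger statement that at every stage $i$ of the iterative construction of $cl([c_1])$, the set $S^i$ is a union of $\textrm{WL}_k$-colour classes of $G$. Once this is established the lemma is immediate: when the construction stabilises, $cl([c_1])$ itself is a union of colour classes, so $v \in cl([c_1]) \cap [c_2]$ forces $[c_2] \subset cl([c_1])$.

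I would proceed by induction on $i$. The base case $S^0 = [c_1]$ is a single colour class. For the inductive step, suppose $S^i$ is a union of colour classes and let $w \in S^{i+1}\setminus S^i$; it suffices to show $[\textrm{WL}_k(w)] \subset S^{i+1}$. By construction of $R(x,y)$, there exist $x,y \in S^i$ with $\textrm{WL}_k(x) = \textrm{WL}_k(y) =: c_x$ such that, after possibly swapping, $w \in d(x)\cap e(y)$. Let $w'$ be an arbitrary vertex with $\textrm{WL}_k(w') = \textrm{WL}_k(w)$. Since the $k$-dim WL partition is equitable, $w$ and $w'$ have the same multiset of colours on their neighbours (and hence on their non-neighbours), so in particular
\begin{align*}
 |d(w)\cap [c_x]| = |d(w')\cap [c_x]| \quad\text{and}\quad |e(w)\cap [c_x]| = |e(w')\cap [c_x]|.
\end{align*}
Because $x \in d(w)\cap [c_x]$ and $y \in e(w)\cap [c_x]$ witness that both of these quantities are positive, there exist $x' \in d(w')\cap [c_x]$ and $y' \in e(w')\cap [c_x]$. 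By the inductive hypothesis $[c_x] \subset S^i$, so $x', y' \in S^i$; moreover $\textrm{WL}_k(x') = \textrm{WL}_k(y') = c_x$ and $w' \in d(x')\cap e(y') \subset R(x',y')$. Hence $w'$ is added to $S^{i+1}$ by the construction, completing the inductive step.

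The main delicate point — and essentially the only real content of the proof — is the use of WL-stability to transport the ``separating pair'' $(x,y)$ witnessing $w \in R(x,y)$ into a corresponding pair $(x',y')$ witnessing $w' \in R(x',y')$. This requires exactly the inductive assumption that $S^i$ already contains the full colour class $[c_x]$, which is why the statement has to be strengthened from ``$S^i$ contains some element of every colour class it meets'' to the full ``$S^i$ is a union of colour classes''. Once this invariant is in place the rest is bookkeeping, and passing to the limit $S^t = cl([c_1])$ yields the lemma.
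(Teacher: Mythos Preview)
Your proof is correct and uses essentially the same idea as the paper: the equitability of the $\textrm{WL}_k$ partition lets you transport the separating pair $(x,y)$ for $w$ to a pair $(x',y')$ for any $w'$ of the same colour. The only difference is packaging: you prove the clean invariant that each $S^i$ is a union of colour classes (starting from $S^0=[c_1]$), whereas the paper starts from $S^0=\{x_1,x_2\}$ for $x_1,x_2\in[c_1]$ and argues by chasing inclusions through intermediate colour classes $c_3$ to conclude $[c_2]\subset cl([c_3])\subset cl([c_1])$. Your formulation is tidier and avoids that indirection, but the underlying argument is the same.
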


\begin{proof}
Let $S^0 = \{x_1,x_2\}$, for $x_1,x_2 \in [c_1]$. If there exists a $u \in [c_3]$ for some colour class $c_3 \in \textrm{WL}_k(G)$, such that $u \in S^1$, then either
\begin{align*}
 &u \in d(x_1), u \notin d(x_2), \textrm{ or}\\
 &u \notin d(x_1), u \in d(x_2).
\end{align*}
Since $c_3$ is a colour class of $\textrm{WL}_k(G)$, then for all such $v \in [c_3]$,
$$ \exists \; x_i,x_j \in [c_1] \textrm{ s.t. } v \in d(x_i), \; v \notin d(x_j),$$
hence $[c_3] \subset cl([c_2])$.

Similarly, if there exists a $v \in [c_2]$ such that $v \in S^{i}$, where $S^{i-1} \cap [c_2] = \emptyset$, then one (or more) of the following hold:
\begin{itemize}
 \item $\exists \; x_i,x_j \in [c_1], \;\{x_i,x_j\} \subset S^{i-1}, \textrm{ s.t. } v \in d(x_i), \; v \notin d(x_j)$.
 \item $\exists \; c_3 \in \textrm{WL}_k(G), \; y_1,y_2 \in [c_3], \; \{y_i,y_j\} \subset S^{i-1}, \textrm{ s.t. } v \in d(y_i), \; v \notin d(y_j)$.
\end{itemize}
In the former case, $[c_2] \subset cl([c_1])$ as above. In the latter case, $[c_2] \subset cl([c_3])$, where $[c_3] \subset cl([c_1])$ in turn, hence $[c_2] \subset cl([c_1])$.
\end{proof}

\begin{corollary}[`No One-Way Closure']
 $[c_2] \subset cl([c_1])$ if and only if $[c_1] \subset cl([c_2])$.
\end{corollary}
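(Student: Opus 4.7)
The plan is to establish the forward direction: assuming $[c_2] \subset cl([c_1])$ I will derive $[c_1] \subset cl([c_2])$, and the converse is then immediate by the symmetry of the roles of $c_1$ and $c_2$. By Lemma \ref{thm:onewayclosure} applied with $c_2$ in place of $c_1$, it suffices to exhibit a single vertex of $[c_1]$ inside $cl([c_2])$, since then the whole class $[c_1]$ must be contained there.

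I would argue by strong induction on the smallest index $i$ at which a vertex of $[c_2]$ appears during the iterative construction of $cl([c_1])$. The base case $i=0$ forces $c_1=c_2$ and there is nothing to prove. For the inductive step, let $v \in [c_2]$ be the first $[c_2]$-vertex introduced, so that $v \in R(x,y)$ for some $x,y \in S^{i-1}$ sharing a common WL-color $c'$. Then $[c'] \subset cl([c_1])$ was produced at an earlier step, so by the inductive hypothesis applied to $c'$ we have $[c_1] \subset cl([c'])$.

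The heart of the argument is a symmetry swap that I expect to be the main (if modest) obstacle. Since $x$ and $y$ share the WL-color $c'$, WL-stability yields $|d(x)\cap [c_2]| = |d(y)\cap [c_2]|$. Taking $v \in d(x)\cap e(y)$ without loss of generality, equality of these cardinalities forces the existence of a companion $v' \in [c_2]$ with $v' \in d(y) \cap e(x)$. Reversing viewpoint, this gives $x \in d(v)\cap e(v')$ and $y \in d(v')\cap e(v)$, so that running the closure construction from the pair $\{v,v'\} \subset [c_2]$ we obtain $\{x,y\} \subset R(v,v') \subset cl([c_2])$. In particular $x \in [c'] \cap cl([c_2])$, and a second application of Lemma \ref{thm:onewayclosure} (now with $c_2$ as the base class and $c'$ as the target) promotes this to $[c'] \subset cl([c_2])$.

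To finish, I combine $[c_1] \subset cl([c'])$ with $[c'] \subset cl([c_2])$: because $cl([c_2])$ is itself a CWS set containing $[c']$ and $cl([c'])$ is by definition the \emph{smallest} such set, we get $cl([c']) \subseteq cl([c_2])$, and hence $[c_1] \subset cl([c_2])$. Symmetry of the hypothesis then yields the biconditional. The only delicate point is the WL-stability step producing the companion vertex $v'$; it must also be checked that the construction of $cl([c_2])$ is insensitive to which seed pair from $[c_2]$ is chosen, which follows directly from Lemma \ref{thm:onewayclosure} since the presence of any one vertex of a color class in the closure forces the entire class to be there.
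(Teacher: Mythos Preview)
Your proof is correct and is essentially an explicit unpacking of what the paper records as ``follows immediately from the proof of Lemma~\ref{thm:onewayclosure}''. The key step---using WL-stability to produce the companion vertex $v'\in[c_2]$ so that the pair $\{x,y\}$ lands in $R(v,v')$, thereby reversing the direction of containment---is exactly the symmetry implicit in the lemma's proof, and your induction on the first appearance index together with the monotonicity $cl([c'])\subseteq cl([c_2])$ is the natural way to propagate this through the iterative construction.
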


\begin{proof}
 Follows immediately from the proof of lemma \ref{thm:onewayclosure}.
\end{proof}

This `no one-way closure' result only applies when considering the closure of \emph{entire} colour classes. If instead considering subsets $S_1 \subset c_1$, $S_2 \subset c_2$, then trivially we can have $S_1 \subset cl([S_2])$ and $S_2 \nsubseteq cl([S_1])$.

\vspace{0.5cm}
\noindent\textbf{\emph{A note on notation}}: In the remainder of this chapter we consider the colour class assigned to ordered $t$-tuples by the $k$-dim WL method, for varying $t$. In denoting the colour class of an some ordered $t$-tuple $(x_1,x_2,\ldots,x_t)$, we will use the notation $\textrm{WL}_k(x_1,x_2,\ldots,x_t)$ to refer to the more explicit $\textrm{WL}_k((x_1,x_2,\ldots,x_t))$, with the additional brackets dropped for aesthetic reasons. When considering \emph{unordered} $t$-tuples, the delimiter $\textrm{WL}_k(\{x_1,x_2,\ldots,x_t\})$ will always be explicitly used.
\vspace{0.5cm}

The following result relates to the conversion between the colour classes of $k$-tuples resulting from the $k$-dim WL method, and the associated colouring of $t$-tuples, for $t < k$.

\begin{theorem}
 \label{thm:macrorecursion}
 Let $S_1 = (x_1,\ldots,x_k)$ and $S_2 = (y_1,\ldots,y_k)$ be ordered $k$-tuples of $V(G)$. Then $\textrm{WL}_k(S_1) = \textrm{WL}_k(S_2)$ in $G$ only if $\textrm{WL}_k(x_{a_1},\ldots,x_{a_t}) = \textrm{WL}_k(y_{a_1},\ldots,y_{a_t})$ in $G$ for all $a_i \in [k], t < k$.
\end{theorem}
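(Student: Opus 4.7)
My plan is to pass through a permutation-invariance lemma for the $k$-dim WL colouring and then iterate a ``remove one coordinate'' reduction built from the recursive definition of $t$-tuple colourings. Specifically, I would first establish the following: for every $t \le k$, every permutation $\pi$ of $[t]$, and every pair of $t$-tuples $\mathbf{x}, \mathbf{y}$, $\textrm{WL}_k(\mathbf{x}) = \textrm{WL}_k(\mathbf{y})$ implies $\textrm{WL}_k(\pi\mathbf{x}) = \textrm{WL}_k(\pi\mathbf{y})$, where $\pi\mathbf{x} = (x_{\pi(1)}, \ldots, x_{\pi(t)})$.

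I would prove this lemma in two stages. At the base $t = k$, I induct on the WL iteration step $n$. For $n = 0$, $\textrm{iso}(\cdot)$ is defined purely in terms of the pairwise equality, edge, and colour relations among positions, and these relations are preserved when the same $\pi$ is applied to both tuples. For the inductive step on $n$, I would verify the identity $(\pi S)'(x)_j = \pi(S'(x)_{\pi(j)})$ and then invoke the inductive hypothesis on each of the $k$ position-pairs to promote the matched sorted multisets $\{\textrm{WL}^{n-1}_k(S'(x)) : x \in V\}$ from $(S_1, S_2)$ to $(\pi S_1, \pi S_2)$, giving matched $\textrm{WL}^n_k$-colourings. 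For $t < k$, I use downward induction on $t$: the recursion $\textrm{WL}_k(\mathbf{x}) = \langle\textrm{Sort}\{\textrm{WL}_k(\mathbf{x}, i) : i \in V\}\rangle$ yields a bijection $\phi$ on $V$ with $\textrm{WL}_k((\mathbf{x}, i)) = \textrm{WL}_k((\mathbf{y}, \phi(i)))$ for all $i$, and the inductive hypothesis at level $t+1$, applied with the permutation extending $\pi$ by fixing the last position, promotes this to $\textrm{WL}_k((\pi\mathbf{x}, i)) = \textrm{WL}_k((\pi\mathbf{y}, \phi(i)))$, whence $\textrm{WL}_k(\pi\mathbf{x}) = \textrm{WL}_k(\pi\mathbf{y})$ by the same recursion.

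With the lemma in hand, the theorem follows cleanly. By the lemma at $t = k$, permuting the coordinates of $S_1$ and $S_2$ by any $\pi$ of $[k]$ satisfying $\pi(i) = a_i$ for $i \le t$ preserves the hypothesis, so I may assume $(a_1, \ldots, a_t) = (1, \ldots, t)$ and descend from $k$-tuples down to $t$-tuples by iteratively removing the last coordinate. The seed step (matched $k$-tuples $\Rightarrow$ matched $(k-1)$-tuples $(x_1, \ldots, x_{k-1})$, $(y_1, \ldots, y_{k-1})$) is direct from WL stability: the sorted multisets of ordered colour-$k$-tuples $\{\textrm{WL}_k(S'_i(x)) : x \in V\}$ agree, and projecting on the $k$-th coordinate gives matching multisets $\{\textrm{WL}_k(x_1, \ldots, x_{k-1}, i) : i \in V\} = \{\textrm{WL}_k(y_1, \ldots, y_{k-1}, i) : i \in V\}$, hence matching $(k-1)$-tuple colourings by the recursion. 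At lower levels $m < k$ the step is structurally the same: matched $m$-tuples yield a bijection $\phi$ with matched $(m+1)$-tuple extensions, the result one level up (combined with the lemma used to move the coordinate to be removed to the end) produces matched $m$-tuple extensions of the shorter tuples, and the recursion packages these into matched $(m-1)$-tuple colourings.

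The main obstacle I expect is the permutation-invariance lemma at $t = k$, because the WL update is inherently order-sensitive: Sort acts only at the outermost level, and each $\textrm{WL}^{n-1}_k(S'(x))$ is a position-indexed $k$-tuple of colours. The delicate step is pinning down the identity $(\pi S)'(x)_j = \pi(S'(x)_{\pi(j)})$ and verifying that the inductive hypothesis, which speaks only about equality of colours on pairs of $k$-tuples, is strong enough to pass through this rearrangement of the ``neighbour'' tuples in $S'(x)$; once that identity is nailed down, the subsequent recursion-based reductions are essentially routine unrollings of the definitions.
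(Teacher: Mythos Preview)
Your proposal is correct and reaches the same recursive conclusion as the paper, but by a genuinely more explicit route. The paper's argument is essentially definitional: it posits the recursion $\textrm{WL}_k(\mathbf{x}) = \langle\textrm{Sort}\{\textrm{WL}_k(\mathbf{x},i) : i \in V(G)\}\rangle$, checks by hand that for $t=k-1$ and $t=k-2$ the nested sorted lists collapse to this form, and then asserts the general case is analogous. In particular, the paper never isolates permutation invariance as a claim; it leans on the stability equation recording colours at every replaced position, but does not reconcile this with the fact that its $(k-1)$-tuple colour is defined by appending only at the \emph{last} position. Your permutation-invariance lemma fills exactly this gap, and your downward induction---matched $m$-tuples give matched $(m+1)$-tuple extensions via the recursion, the permutation lemma moves the coordinate to be dropped to the end, and the already-established step $(m+2)\to(m+1)$ then yields matched extensions of the shorter tuple---makes the descent rigorous where the paper only gestures at it. The identity $(\pi S)'(x)_j = \pi(S'(x)_{\pi(j)})$ you single out is precisely the bookkeeping the paper omits. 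What the paper's approach buys is brevity and an emphasis on the recursive definition being ``lossless''; what yours buys is an argument that actually covers arbitrary index sequences $(a_1,\ldots,a_t)$.

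One minor point: your reduction via a permutation $\pi$ of $[k]$ with $\pi(i)=a_i$ for $i\le t$ tacitly assumes the $a_i$ are distinct. The theorem as stated allows repeats, though this case follows easily once the distinct case is in hand (and the paper does not address it either).
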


\begin{proof}
 The colouring of $t$-tuples stemming from the $k$-tuple colouring is defined to be calculated such that $t$-tuples have the same colour if and only if there are no differences between the associated $k$-tuple colour classes that could possibly distinguish them. This definition is far from explicit however, hence in what follows we will consider various \emph{possible} $t$-tuple colourings that satisfy this condition, with the aim being to settle on the simplest possible such colouring system.

 To satisfy the above condition, a colouring of $(k-1)$-tuples need only encompass the information contained in
 \begin{align*}
  \textrm{WL}_k(x_1,\ldots,x_{k-1}) = \langle \: \textrm{Sort}\{\textrm{WL}_k(x_1,\ldots,x_{k-1},i) : i \in V(G)\} \: \rangle.
 \end{align*}
 However $(k-2)$-tuples satisfy the above condition, and are hence $k$-similar if and only if more complicated sets of $k$-tuple colourings are equal, involving ordered, nested sets of $k$-tuple and $(k-1)$-tuple colourings. Simplifying the characterisation of $t$-tuple colouring will hence be potentially quite useful.

 Recall from (\ref{eq:WL_k_infty}) that
 \begin{align}
  \label{eq:WL_k-1}
  \textrm{WL}_k(x_1,\ldots,x_k) = \langle \: \textrm{Sort}\{(\textrm{WL}_k(x_1,\ldots,x_{k-1},i),\ldots,\textrm{WL}_k(i,x_2,\ldots,x_k)) : i \in V(G) \} \: \rangle,
 \end{align}
 hence the theorem holds directly for $t = k-1$.

 For $t = k-2$, a system of colouring satisfies the above condition only if $(k-2)$-tuple $\mathbf{x}$ will have colours corresponding to some ordered, nested list such as:
 \begin{align}
 \label{eq:nestedWL}
  \textrm{WL}_k(\mathbf{x}) = \langle \:\textrm{Sort}\{(&\textrm{WL}_k(\mathbf{x},i), \notag\\
  &\textrm{ Sort}\{(\textrm{WL}_k(\mathbf{x},i,j),\textrm{WL}_k(\mathbf{x},j),\ldots) : j \in V(G) \} : i \in V(G) \} \:\rangle,
 \end{align}
 where the unspecified continuation involves further $k$- and $(k-1)$ tuples involving $k-3$ elements of $(\mathbf{x})$ together with $i$ and $j$. However by (\ref{eq:WL_k-1}) $k$-tuples have equal colourings only if their corresponding ordered sets of neighbouring $(k-1)$-tuples have equal colourings, hence (\ref{eq:nestedWL}) can be simplified to:
 \begin{align}
  \textrm{WL}_k(\mathbf{x}) &= \langle \:\textrm{Sort}\{\textrm{Sort}\{\textrm{WL}_k(\mathbf{x},i,j) : j \in V(G) \} : i \in V(G) \} \:\rangle \notag\\
  &= \langle \:\textrm{Sort}\{\textrm{WL}_k(\mathbf{x},i) : i \in V(G) \} \:\rangle,
 \end{align}
 hence the theorem holds for $t = k-2$ also.

 Similarly, for general $t$ the factors of a given $t$-tuple involving $(t+i)$-tuples, where $t+i < k$ can be incorporated into the relevant $k$-tuple factors.

 Hence the $t$-tuple $\mathbf{x} = (x_1,\ldots,x_t)$ can be consistently coloured by:
 \begin{align}
  \textrm{WL}_k(\mathbf{x}) &= \langle\: \textrm{Sort} \{ \ldots \textrm{Sort} \{ \textrm{WL}_k(\mathbf{x},i_1,\ldots,i_{k-t}) : i_{k-t} \in V(G) \} \ldots : i_1 \in V(G) \} \:\rangle \notag\\
  &= \langle \:\textrm{Sort} \{ \textrm{WL}_k(\mathbf{x},i) : i \in V(G) \} \:\rangle,
 \end{align}
 without losing any relevant information present in the $k$-tuples (meaning all information that could potentially distinguish between $t$-tuples is all included).
\end{proof}

One important implication of this result is that the process of converting the $k$-tuple colourings to $t$-tuple colourings (for any $t < k$) and then \emph{back} to $k$-tuple colourings cannot partition the set of $k$-tuples further. Also note the following immediate corollary.

\begin{corollary}
\label{tuples}
 For ordered $t$-tuples $\,\mathbf{x} = (x_1,\ldots,x_t)$ and $\,\mathbf{y} = (y_1,\ldots,y_t)$ of $G$, $\;\textrm{WL}_k(\mathbf{x}) = \textrm{WL}_k(\mathbf{y})$ in $G$ only if $\textrm{WL}_k(x_{a_1},\ldots,x_{a_r}) = \textrm{WL}_k(y_{a_1},\ldots,y_{a_r})$ in $G$ for all such $r$-tuples, $r < t$, in which $a_i \in [k] \;\: \forall \; i \in [r]$.
\end{corollary}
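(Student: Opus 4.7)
The plan is to split into two cases according to whether $t \le k$ or $t > k$, reducing each to Theorem \ref{thm:macrorecursion}, which already provides the statement when $t$ equals $k$.

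For the case $t \le k$, I would first iterate the recursive relation
\[
 \textrm{WL}_k(\mathbf{x}) = \langle \: \textrm{Sort}\{\textrm{WL}_k(\mathbf{x},i) : i \in V(G)\} \: \rangle
\]
established in the proof of Theorem \ref{thm:macrorecursion} $k-t$ times, so that the $t$-tuple colour $\textrm{WL}_k(\mathbf{x})$ is (equivalent to) the sorted multiset of $k$-tuple colours $\{\textrm{WL}_k(\mathbf{x}, i_1, \ldots, i_{k-t}) : (i_1,\ldots,i_{k-t}) \in V(G)^{k-t}\}$. The hypothesis $\textrm{WL}_k(\mathbf{x}) = \textrm{WL}_k(\mathbf{y})$ then forces these two sorted multisets to coincide, so for any choice of extension $(\mathbf{x}, i_1, \ldots, i_{k-t})$ there exists an extension $(\mathbf{y}, j_1, \ldots, j_{k-t})$ with equal $k$-tuple WL colour. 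Applying Theorem \ref{thm:macrorecursion} to this matched pair, the WL colours of their sub-$r$-tuples coincide for every choice of indices in $[k]$ and every $r < k$; restricting those indices to lie in $[t]$ gives exactly $\textrm{WL}_k(x_{a_1}, \ldots, x_{a_r}) = \textrm{WL}_k(y_{a_1}, \ldots, y_{a_r})$, as required.

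For the case $t > k$, the $t$-tuple colouring is defined recursively via the ordered set of $(t-1)$-sub-tuple colours, so I would induct downwards on $t$. The inductive step is immediate: equality of $\textrm{WL}_k(\mathbf{x})$ and $\textrm{WL}_k(\mathbf{y})$ forces the corresponding $(t-1)$-sub-tuples (obtained by deleting any single coordinate) to have matching colours position-by-position, and the induction hypothesis transports this down to arbitrary $r$-sub-tuples. The base case $t = k$ is Theorem \ref{thm:macrorecursion} itself, and $t < k$ is handled by the previous paragraph.

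The only real care point, and the step I expect to be fussiest, is the passage from sorted-multiset equality of $k$-tuple extensions to the existence of a specific matched pair $(\mathbf{x}, \mathbf{i})$, $(\mathbf{y}, \mathbf{j})$ with identical $k$-tuple WL colour. This is not a bijection respecting coordinates but simply a consequence of multiset equality, which is all that the downstream application of Theorem \ref{thm:macrorecursion} needs, since that theorem only asks about sub-tuples within a single $k$-tuple at a time. Once this is articulated cleanly, the remainder is bookkeeping.
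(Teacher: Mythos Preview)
Your argument is correct and is exactly the natural elaboration of what the paper has in mind; the paper itself offers no proof beyond labelling the statement an ``immediate corollary'' of Theorem~\ref{thm:macrorecursion}, and your two-case reduction (extend up to $k$-tuples via the recursive formula and invoke the theorem when $t\le k$; descend through the ordered sub-tuple definition when $t>k$) is precisely how one fills in that word ``immediate''. The care point you flag about passing from multiset equality to a matched pair is handled correctly and is indeed the only non-bookkeeping step.
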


This result also applies for $(k+i)$-tuples, where $i \ge 1$, for which a similar simplified recursive definition can be constructed.
In particular, let $S_1 = (x_1, \ldots, x_k)$ and $S_2 = (y_1, \ldots, y_k)$ be ordered $k$-tuples of $V(G)$, let $\mathbf{x}$ be an ordered $i$-tuple of $V(G)$, and let $(S_1,\mathbf{x})$, $(S_2,\mathbf{x})$ be the ordered $(k+i)$-tuples resulting from concatenating the respective $k$ and $i$ tuples. Then the following result holds.

\begin{theorem}
 \label{thm:macro_recursion_larger}
 $\textrm{WL}_k(S_1) = \textrm{WL}_k(S_2)\;$ only if $$\;\textrm{Sort} \{ \textrm{WL}_k(S_1,\mathbf{x}) : \mathbf{x} \in V(G)^{i} \} = \textrm{Sort} \{ \textrm{WL}_k(S_2,\mathbf{x}) : \mathbf{x} \in V(G)^{i} \}, \;\textrm{ for all } \; i > 0.$$
\end{theorem}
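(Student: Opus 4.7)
The plan is induction on $i$, with the base case $i = 1$ carrying the real content and the inductive step being a routine concatenation. Throughout, I treat the $\textrm{WL}_k$-colour of a $(k+j)$-tuple for $j \geq 1$ as the ordered list of $\textrm{WL}_k$-colours of its $k$-subtuples together with its isomorphism class, per the recursive definition referenced just after Theorem \ref{thm:macrorecursion}.

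For the base case, suppose $\textrm{WL}_k(S_1) = \textrm{WL}_k(S_2)$. By the stability formula (\ref{eq:WL_k_infty}) one has
\begin{align*}
  \textrm{Sort}\{\textrm{WL}_k(S_1'(x)) : x \in V(G)\} = \textrm{Sort}\{\textrm{WL}_k(S_2'(x)) : x \in V(G)\},
\end{align*}
from which I would extract a bijection $\pi : V(G) \to V(G)$ with $\textrm{WL}_k(S_1'(x)) = \textrm{WL}_k(S_2'(\pi(x)))$ coordinate-by-coordinate. I then aim to deduce $\textrm{WL}_k(S_1, x) = \textrm{WL}_k(S_2, \pi(x))$ for every $x$, which immediately yields the sorted-multiset equality. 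This proceeds by inspecting the $k$-subtuples of $(S_1, x)$: one is $S_1$ itself (matching $S_2$ by hypothesis), while the remaining $k$ are the ``delete $x_j$, append $x$'' tuples, which should agree (via $\pi$) with the corresponding subtuples of $(S_2, \pi(x))$ after reconciling positional ordering using Corollary \ref{tuples}. The matching iso types of $(S_1, x)$ and $(S_2, \pi(x))$ follow from $\textrm{iso}(S_1) = \textrm{iso}(S_2)$ together with the compatibility of $x$ with $S_1$ mirroring that of $\pi(x)$ with $S_2$, as already encoded in the equality of the $S'$ colourings.

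For the inductive step I would split $V(G)^{i+1} = V(G)^i \times V(G)$. The induction hypothesis produces a bijection on $V(G)^i$ matching the $(k+i)$-tuple colours $\textrm{WL}_k(S_1, \mathbf{x}') = \textrm{WL}_k(S_2, \mathbf{y}')$. Applying the base-case argument, generalised from $k$-tuples to $(k+i)$-tuples (which is routine, since the stability formula (\ref{eq:WL_k_infty}) carries over to larger tuples via the recursive definition of their colourings), then produces an additional bijection on the final coordinate for each matched pair. Composition yields the desired bijection on $V(G)^{i+1}$, completing the induction.

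The main obstacle is reconciling the ``coordinate replacement'' operation appearing in $S'(x)$ with the ``coordinate append'' operation appearing in $(S, x)$. The $k$-subtuple $(x_1, \ldots, \widehat{x_j}, \ldots, x_k, x)$ of the appended $(k+1)$-tuple has the same underlying multiset of entries as the $j$-th component of $S'(x)$ but with $x$ shifted to the last position, and since $\textrm{WL}_k$ on ordered $k$-tuples is sensitive to ordering, translating between these two $k$-tuple colours must be justified carefully, via Corollary \ref{tuples} and the sub-selection content of Theorem \ref{thm:macrorecursion}. A secondary technical point is verifying that the base-case argument genuinely lifts to $(k+i)$-tuples in the inductive step, which should follow by checking that an analogue of (\ref{eq:WL_k_infty}) holds at the level of $(k+i)$-tuple colourings, but this is worth confirming explicitly rather than asserting.
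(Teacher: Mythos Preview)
Your inductive skeleton matches the paper's: induction on $i$, with the $(k{+}i)$-tuple colouring defined recursively as the ordered list of $(k{+}i{-}1)$-subtuple colours (the paper's equation (\ref{eq:WLz})). The paper is far terser than you are---it asserts the base case holds ``by definition'' and the inductive step ``by (\ref{eq:WLz})'' with essentially no further argument---so your attempt to unpack the base case via the stability formula is doing work the paper leaves entirely implicit.

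The ordering obstacle you flag is genuine, but the tools you invoke do not resolve it. Theorem \ref{thm:macrorecursion} and Corollary \ref{tuples} let you pass from equal $m$-tuple colours to equal colours of \emph{strictly shorter} subtuples with arbitrary index selection; neither lets you permute the coordinates of a $k$-tuple while remaining at length $k$. What you actually need is permutation-equivariance of $\textrm{WL}_k$: if $\textrm{WL}_k(u_1,\ldots,u_k)=\textrm{WL}_k(v_1,\ldots,v_k)$ then the same equality holds after applying any fixed $\sigma\in S_k$ to both sides. This is a separate (easy) induction on the refinement step (\ref{eq:WL_k})---the iso-type initialisation is manifestly equivariant and the update rule commutes with a simultaneous coordinate permutation---but it is not a consequence of the results you cite. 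With equivariance in hand, the component-wise equality $\textrm{WL}_k(S_1'(x))=\textrm{WL}_k(S_2'(\pi(x)))$ from (\ref{eq:WL_k_infty}) transports directly to the delete-and-append subtuples under the \emph{same} bijection $\pi$, and the base case closes. (Going via Theorem \ref{thm:macrorecursion} and the $(k{-}1)$-tuple recursion would only give you a separate bijection $\pi_j$ for each deleted coordinate $j$, which is not enough.) A smaller caution on the inductive step: there is no refinement process, and hence no stability formula of the form (\ref{eq:WL_k_infty}), at the level of $(k{+}i)$-tuples---their colouring is a one-shot definition in terms of shorter tuples. The clean way to run the induction is to establish once that equal $m$-tuple colours (for any $m\ge k$) force equal sorted one-step-extension multisets, and then iterate; this again comes down to the equivariance fact above.
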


\begin{proof}
 Given some $(t+1)$-tuple $\mathbf{z} = (z_1,\ldots,z_{t+1})$, denote by $\mathbf{z}'$ the ordered set of associated $t$-tuples contained in $\mathbf{z}$, such that $\mathbf{z}' = ((z_1,\ldots,z_{t}), \ldots, (z_2,\ldots,z_{t+1}))$.
 Similarly to the proof of Theorem \ref{thm:macrorecursion} we will define the colouring of $(k+i)$-tuples such that there two $(k+i)$-tuples have the same colour if and only if no differences exist between the associated $k$-tuples that could possibly distinguish them.

 Hence a the colouring of a $(k+i)$-tuple $\mathbf{z} = (z_1,\ldots,z_{k+i})$ can be constructed recursively by:
 \begin{align}
 \label{eq:WLz}
  \textrm{WL}_k(\mathbf{z}) = \langle \textrm{WL}_k(\mathbf{z}') \rangle
 \end{align}
 Then by definition the theorem holds for $i=1$. Assume that it also holds for $i=t$, and let $\mathbf{z}$ as defined above be a $(k+t+1)$-tuple of $V(G)$. Then by (\ref{eq:WLz}) it also holds for $i=t+1$, hence by induction it holds for all $i>0$.
\end{proof}

Combining the preceding two theorems, regarding $t$-tuples where $t<k$ and $t > k$ respectively, we obtain the following corollary.

\begin{corollary}
\label{cor:extend}
 Let $\mathbf{x}$ and $\mathbf{y}$ be $t$-tuples of $V(G)$, for some $t<k$. Then $\textrm{WL}_k(\mathbf{x}) = \textrm{WL}_k(\mathbf{y})$ only if the corresponding sorted sets of $(t+i)$-tuple colours are also equal, for \emph{all} $i > 0$.
\end{corollary}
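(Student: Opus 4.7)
The plan is to prove the corollary by combining Theorem \ref{thm:macrorecursion} (which handles extensions of $t$-tuples with $t < k$ up to $k$-tuples) and Theorem \ref{thm:macro_recursion_larger} (which handles extensions beyond $k$-tuples), splitting into two cases according to whether $t+i \le k$ or $t+i > k$.

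First I would handle the case $t+i \le k$. By Theorem \ref{thm:macrorecursion}, any $t$-tuple colouring $\textrm{WL}_k(\mathbf{x})$ for $t<k$ equals $\langle\textrm{Sort}\{\textrm{WL}_k(\mathbf{x},j) : j \in V(G)\}\rangle$. Applying this identity iteratively $i$ times (each time to every $(t+r)$-tuple appearing at depth $r$ in the nested sort) shows that $\textrm{WL}_k(\mathbf{x})$ determines, and is determined by, the nested sorted multiset of $\textrm{WL}_k(\mathbf{x},z_1,\ldots,z_i)$ for $(z_1,\ldots,z_i) \in V(G)^i$. Flattening nested sorts of this form to a single sort (legitimate because the Sort operation only acts on the outermost level and the inner layers do not depend on the ordering of the $z_r$) yields that $\textrm{WL}_k(\mathbf{x}) = \textrm{WL}_k(\mathbf{y})$ implies
\begin{align*}
 \textrm{Sort}\{\textrm{WL}_k(\mathbf{x},\mathbf{z}) : \mathbf{z} \in V(G)^i\} = \textrm{Sort}\{\textrm{WL}_k(\mathbf{y},\mathbf{z}) : \mathbf{z} \in V(G)^i\},
\end{align*}
which is exactly what is required in this case.

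Next I would handle the case $t+i > k$. Write $i = (k-t) + j$ with $j \ge 1$. The first step is to extend $\mathbf{x}$ and $\mathbf{y}$ to $k$-tuples via the preceding case with $i$ replaced by $k-t$, giving
\begin{align*}
 \textrm{Sort}\{\textrm{WL}_k(\mathbf{x},\mathbf{w}) : \mathbf{w} \in V(G)^{k-t}\} = \textrm{Sort}\{\textrm{WL}_k(\mathbf{y},\mathbf{w}) : \mathbf{w} \in V(G)^{k-t}\}.
\end{align*}
In particular, for every $\mathbf{w}$ there is a matching $\mathbf{w}'$ with $\textrm{WL}_k(\mathbf{x},\mathbf{w}) = \textrm{WL}_k(\mathbf{y},\mathbf{w}')$, establishing a colour-preserving bijection on $V(G)^{k-t}$. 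Now I apply Theorem \ref{thm:macro_recursion_larger} to each such matched pair of $k$-tuples with the $j$-tuple parameter, which gives
\begin{align*}
 \textrm{Sort}\{\textrm{WL}_k(\mathbf{x},\mathbf{w},\mathbf{u}) : \mathbf{u} \in V(G)^{j}\} = \textrm{Sort}\{\textrm{WL}_k(\mathbf{y},\mathbf{w}',\mathbf{u}) : \mathbf{u} \in V(G)^{j}\}.
\end{align*}
Taking the union over the bijection $\mathbf{w} \mapsto \mathbf{w}'$ yields equality of the sorted multisets indexed by all $(k-t+j)$-tuples $= i$-tuples, completing the case.

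The main obstacle, such as it is, will be the bookkeeping in Case 1: justifying that the nested sorted lists obtained from iterating Theorem \ref{thm:macrorecursion} can be collapsed into a single flat sorted multiset without losing information, which is exactly the simplification argument already used inside the proof of Theorem \ref{thm:macrorecursion}. No new ideas are required beyond careful combinatorial bookkeeping of nested Sort operations; in particular, no properties of the graph $G$ beyond those encoded in the $k$-tuple colouring are invoked.
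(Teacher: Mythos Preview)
Your proposal is correct and follows essentially the same approach as the paper: the paper's proof is the single line ``Follows directly from Theorems~\ref{thm:macrorecursion} and~\ref{thm:macro_recursion_larger},'' and your case split $t+i\le k$ versus $t+i>k$ simply spells out how those two theorems combine. The bookkeeping you flag (collapsing nested sorts, matching $k$-tuples via a bijection before invoking Theorem~\ref{thm:macro_recursion_larger}) is exactly the intended content of ``follows directly.''
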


\begin{proof}
 Follows directly from Theorems \ref{thm:macrorecursion} and \ref{thm:macro_recursion_larger}.
\end{proof}

Note that the colour class of a $k$-tuple resulting from the $k$-dim WL method are linked to the paths of each length connecting elements of the $k$-tuple. In particular, Alzaga \textit{et al.} \cite{Alzaga10} show that $k$-tuples $\mathbf{x} = (x_1,\ldots,x_k)$ and $\mathbf{y} = (y_1,\ldots,y_k)$ have the same colour only if for any $i,j \in [k]$ and $m \in \mathbb{Z}$, the number of paths of length $z$ connecting $x_i$ to $x_j$, and $y_i$ to $y_j$ respectively are equal. This extends trivially to the case where the number of paths of each \emph{character} are considered, where the character of a path denotes the ordered set of colour classes associated with each element of the path in turn.

It will also be useful to establish a relationship between the colour class $c$ of a vertex $v \in V(G)$, and the properties of the set of CWS closures $\{cl(\{v,v'\}) : v' \in [c]\}$.

\begin{theorem}
 \label{thm:CWSclosures}
 Let $x_1,x_2,y_1,y_2 \in V(G)$. If $\textrm{WL}_k(x_1,x_2) = \textrm{WL}_k(y_1,y_2)$, for $k \ge 3$, then the sets $A = cl(\{x_1,x_2\})$ and $B = cl(\{y_1,y_2\})$ have the following properties.
 \begin{enumerate}
  \item $|A| = |B|$,
  \item For all $c_i \in \textrm{WL}_k(G)$, $|\{v \in A : \textrm{WL}_k(v) = c_i\}| = |\{v \in B : \textrm{WL}_k(v) = c_i\}|$,
  \item $A$ is prime if and only if $B$ is prime, and
  \item The graphs induced on $A$ and $B$ respectively are $k$-similar.
 \end{enumerate}
\end{theorem}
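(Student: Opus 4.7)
The plan is to construct, in parallel with the iterative CWS closure procedure stated just above the theorem, a bijection $\phi: A \to B$ that preserves $\textrm{WL}_k$ tuple-colours on $G$; the four properties then follow as direct corollaries.

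First I would unpack what the hypothesis $\textrm{WL}_k(x_1, x_2) = \textrm{WL}_k(y_1, y_2)$ provides. By Corollary \ref{tuples} (with $t = 2 < k$) we at least get $\textrm{WL}_k(x_i) = \textrm{WL}_k(y_i)$, but more crucially Corollary \ref{cor:extend} says that for every $i \ge 1$ the sorted multisets
\[ \{\textrm{WL}_k(x_1, x_2, \mathbf{u}) : \mathbf{u} \in V(G)^i\} \quad \text{and} \quad \{\textrm{WL}_k(y_1, y_2, \mathbf{v}) : \mathbf{v} \in V(G)^i\} \]
coincide. The assumption $k \ge 3$ is essential here because at least one slot of the $k$-tuple must remain free to encode, for a candidate vertex $w$, the pair of adjacencies to the initial vertices---exactly the information needed to compute the symmetric difference $R(x_1, x_2)$ that drives the next stage of the closure.

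Next I would proceed by induction on the stages $S^i_A, S^i_B$ of the construction, maintaining the invariant: there is a bijection $\phi_i : S^i_A \to S^i_B$ such that for every ordered tuple $(u_1, \ldots, u_t)$ with $t \le k$ of elements of $S^i_A$,
\[ \textrm{WL}_k(u_1, \ldots, u_t) = \textrm{WL}_k(\phi_i(u_1), \ldots, \phi_i(u_t)). \]
Base case: $\phi_0(x_j) = y_j$ satisfies this by hypothesis together with Corollaries \ref{tuples} and \ref{cor:extend}. For the inductive step, pick any colour-equal pair $(u, v) \subset S^i_A$; the invariant gives $\textrm{WL}_k(u, v) = \textrm{WL}_k(\phi_i(u), \phi_i(v))$, hence by Corollary \ref{cor:extend} the sorted colour multisets over the third slot coincide, so $|R(u, v)| = |R(\phi_i(u), \phi_i(v))|$ with matching colour classes. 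Extending $\phi_i$ to $\phi_{i+1}$ over all such added vertices gives the next stage.

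The main obstacle is precisely the simultaneity of these extensions: pairwise bijections between various $R(u, v)$ and $R(\phi_i(u), \phi_i(v))$ could in principle conflict on a vertex $w$ that lies in more than one such symmetric difference, or more generally fail to preserve the higher-tuple colour invariant. I would resolve this by invoking the full strength of the invariant---it controls not just pairs but all tuples of length up to $k$---together with Corollary \ref{cor:extend} to extend to longer tuples. That is, the colour class of a candidate $w \in A$ is determined by the colours of all tuples it forms with elements of $S^i_A$, and the matching multiset of tuples on the $B$ side forces a coherent choice of $\phi_{i+1}(w)$. The limit $\phi = \bigcup_i \phi_i$ is then forced.

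Given $\phi : A \to B$ with this tuple-colour invariant, the four properties are immediate. Property (1) is $|A| = |\phi(A)| = |B|$. Property (2) holds because $\phi$ preserves $\textrm{WL}_k(\cdot)$-colours on vertices. For (3), any proper non-trivial CWS subgraph $A' \subset A$ (defined relative to $\textrm{WL}_k$-colours and neighbourhood equality) maps under $\phi$ to a subgraph $\phi(A') \subset B$ which is CWS for the same reason, since both colour classes and adjacency are $\phi$-invariant; hence $A$ prime iff $B$ prime. For (4), preservation of 2-tuple colours implies preservation of adjacency (already encoded in $\textrm{WL}_k^0$ via $\textrm{iso}$), so $\phi$ is in fact a graph isomorphism of the induced subgraphs on $A$ and $B$, which is strictly stronger than $k$-similarity.
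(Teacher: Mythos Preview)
Your approach has a genuine gap, and it stems from overshooting the target. You aim to build a bijection $\phi:A\to B$ preserving \emph{all} $k$-tuple colours, and you explicitly conclude in part~(4) that $\phi$ is a graph isomorphism of the induced subgraphs, ``strictly stronger than $k$-similarity''. But the theorem only asserts $k$-similarity, and the stronger statement is false in general: $A$ and $B$ can be mutually $k$-equivalent, non-isomorphic CWS subgraphs (this is precisely the situation the paper is designed to analyse, e.g.\ when $G$ contains both a CFI$(H)$ and a CFI$'(H)$ component). So no such $\phi$ can exist in those cases, and the inductive construction must break down.

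The breakdown is exactly at the ``main obstacle'' you flag but do not resolve. Corollary~\ref{cor:extend} guarantees that sorted \emph{multisets} of extended tuple colours agree; it does not guarantee a single coherent bijection realising all these matchings simultaneously. Your proposed fix---``the matching multiset of tuples on the $B$ side forces a coherent choice of $\phi_{i+1}(w)$''---is the crux of the $k$-pebble bijection game, and winning it is equivalent to isomorphism in the relevant logic, not to $k$-similarity. The paper avoids this trap entirely: it never builds a bijection. Instead it shows, by induction on the closure stages, that for any $u,v\in V(G)$ the 3-tuple colours $\textrm{WL}_k(x_1,x_2,u)$ and $\textrm{WL}_k(x_1,x_2,v)$ differ whenever exactly one of $u,v$ lies in $S^t$. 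Thus membership in $A$ is encoded in $\textrm{WL}_k(x_1,x_2,\cdot)$, which immediately gives (1) and (2); property (3) follows by detecting a non-prime witness pair $(i,j)$ inside $A$ via its tuple colour; and (4) follows from Corollary~\ref{cor:extend} by restricting the matching sorted multisets to $\mathbf{z}\in A$ versus $\mathbf{z}\in B$, yielding only $k$-similarity of the induced subgraphs, not isomorphism.
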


\begin{proof}
 Recall the notation regarding $cl(\{x_1,x_2\})$, where $S^0 = \{x_1,x_2\}$, $R(x,y) = (d(x) \cap e(y)) \cup (e(x) \cap d(y))$ and $S^{i+1} = (\bigcup_{x,y}R(x,y)) \cup S^i$.

 For $u,v \in V(G)$, let $u \in S^1$, and $v \notin S^1$. Then $\textrm{iso}(x_1,x_2,u) \neq \textrm{iso}(x_1,x_2,v)$ and hence $\textrm{WL}_k(x_1,x_2,u) \neq \textrm{WL}_k(x_1,x_2,v)$.

 Further, let $u \in S^{t+1}$ such that $u \notin S^t$, and let $v \notin S^t$. Then
 \begin{align}
  &\exists \; i,j \in S^t \textrm{ such that } u \in R(i,j), \textrm{ and} \notag\\
  &\nexists \; i,j \in S^t \textrm{ such that } v \in R(i,j).
 \end{align}
 Assume that for all $l \in S^t, m \notin S^t$, we have $\textrm{WL}_k(x_1,x_2,l) \neq \textrm{WL}_k(x_1,x_2,m)$. Note that if $k \ge 3$ we then have $\textrm{WL}_k(x_1,x_2,u) = \textrm{WL}_k(x_1,x_2,v)$ only if there exist some $l' \in S^t$ and $m' \notin S^t$ such that
 \begin{align}
  &\textrm{WL}_k(l,u) = \textrm{WL}_k(l',v) \textrm{, and} \notag\\
  &\textrm{WL}_k(m,u) = \textrm{WL}_k(m',v).
 \end{align}
 Hence $u \in S^{t+1}$ such that $u \notin S^t$, and $v \notin S^t$ implies that $\textrm{WL}_k(x_1,x_2,u) \neq \textrm{WL}_k(x_1,x_2,v)$.

 Since $u \in S^1$, and $v \notin S^1$ implies that $\textrm{WL}_k(x_1,x_2,u) \neq \textrm{WL}_k(x_1,x_2,v)$, then by induction this holds for all $t > 1$ also.

 Furthermore, by corollary \ref{tuples}, for all $u,v \in S^t$, $\textrm{WL}_k(x_1,x_2,u) = \textrm{WL}_k(x_1,x_2,v)$ only if $\textrm{WL}_k(u) = \textrm{WL}_k(v)$, hence (2) holds (and as a corollary, (1) holds).

 Let $i,j \in A$, $\textrm{WL}_k(i) = \textrm{WL}_k(j)$, such that $cl(\{i,j\}) \neq A$ (i.e. $A$ is not prime). Then there exists a vertex $u \in A$, $u \notin cl(\{i,j\})$. Assume further that $B$ is prime, and so $\nexists \; l,m \in B$, $\textrm{WL}_k(l) = \textrm{WL}_k(m)$ such that $v \notin cl(\{l,m\})$ for some $v \in B$. Hence $\nexists \; l,m \in B$ such that $\textrm{WL}_k(l,m) = \textrm{WL}_k(i,j)$, from which it follows (from corollary \ref{tuples} and the proof of (2)) that $\textrm{WL}_k(x_1,x_2) \neq \textrm{WL}_k(y_1,y_2)$. Hence (3) holds.

 Similarly, let $\mathbf{z}$ be a $t$-tuple of $V(G)$. By Corollary \ref{cor:extend}, $(x_1,x_2)$ and $(y_1,y_2)$ belong to the same colour class only if 
 $$\textrm{Sort} \{ \textrm{WL}_k(x_1,x_2,\mathbf{z}) : \mathbf{z} \in V(G)^{t} \} = \textrm{Sort} \{ \textrm{WL}_k(y_1,y_2,\mathbf{z}) : \mathbf{z} \in V(G)^{t} \},$$
 for all $t > 0$. In particular, note that for any $\mathbf{z_1} \in A$ and $\mathbf{z_2} \notin A$, we have
 $$\textrm{WL}_k(x_1,x_2,\mathbf{z_1}) \neq \textrm{WL}_k(x_1,x_2,\mathbf{z_2}).$$
 Hence
 $$\textrm{Sort} \{ \textrm{WL}_k(x_1,x_2,\mathbf{z}) : \mathbf{z} \in A \} = \textrm{Sort} \{ \textrm{WL}_k(y_1,y_2,\mathbf{z}) : \mathbf{z} \in B \},$$
 and in particular,
 $$\textrm{WL}_k(A) = \textrm{WL}_k(B).$$
 Finally, note that if two CWS subgraphs of $G$ are $k$-similar \emph{within} $G$, then the respective induced graphs are also $k$-similar, from which (4) follows.
\end{proof}

\begin{definition}
 Let $v \in [c]$, such that $c = \textrm{WL}_k(v) \mid_G$. Denote the \emph{CWS spectrum} of a vertex $v$ to be the set of pairwise CWS closures $C_v = \{cl(\{v,v'\}) : v' \in [c]\}$.
\end{definition}

Then the following corollary of Theorem \ref{thm:CWSclosures} holds.

\begin{corollary}
\label{spectrum}
 Let $u,v \in V(G)$. Then $\textrm{WL}_k(u) = \textrm{WL}_k(v)$ only if there is a matching between elements of $C_v$ and $C_u$ in the sense of Theorem \ref{thm:CWSclosures}.
\end{corollary}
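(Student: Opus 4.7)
The plan is to derive the matching directly from Theorem \ref{thm:CWSclosures} together with the recursive simplification of $t$-tuple colourings established in Theorem \ref{thm:macrorecursion}. Assume $\textrm{WL}_k(u) = \textrm{WL}_k(v)$, and write $c$ for their common colour class. By the recursive characterisation
\begin{align*}
 \textrm{WL}_k(x) = \langle \, \textrm{Sort}\{\textrm{WL}_k(x,i) : i \in V(G)\} \, \rangle,
\end{align*}
the hypothesis is equivalent to the equality of the sorted multisets $\{\textrm{WL}_k(u,i) : i \in V(G)\}$ and $\{\textrm{WL}_k(v,i) : i \in V(G)\}$. This yields a bijection $\pi : V(G) \to V(G)$ with $\textrm{WL}_k(u,i) = \textrm{WL}_k(v,\pi(i))$ for every $i \in V(G)$.

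Next, I would observe that $\pi$ respects $1$-tuple colours: Corollary \ref{tuples} applied to the pair $(u,i)$ versus $(v,\pi(i))$ gives $\textrm{WL}_k(i) = \textrm{WL}_k(\pi(i))$, so in particular $\pi$ restricts to a bijection $\pi\hspace{-0.05cm}\mid_{[c]} : [c] \to [c]$. This restriction is precisely the natural index set labelling the CWS spectra $C_u = \{cl(\{u,u'\}) : u' \in [c]\}$ and $C_v = \{cl(\{v,v'\}) : v' \in [c]\}$, so $\pi\hspace{-0.05cm}\mid_{[c]}$ induces a candidate matching $M : C_u \to C_v$ given by $M(cl(\{u,u'\})) = cl(\{v,\pi(u')\})$.

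It remains to verify that this matching agrees with the notion of correspondence promised by Theorem \ref{thm:CWSclosures}. For each $u' \in [c]$, we have $\textrm{WL}_k(u,u') = \textrm{WL}_k(v,\pi(u'))$ by construction, which is exactly the hypothesis of Theorem \ref{thm:CWSclosures} applied to the ordered pairs $(u,u')$ and $(v,\pi(u'))$. Hence the two closures $cl(\{u,u'\})$ and $cl(\{v,\pi(u')\})$ have equal cardinality, equal colour-class distributions, share the property of being prime, and induce $k$-similar subgraphs of $G$. Summing over $u' \in [c]$ produces the required matching between $C_u$ and $C_v$.

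The only subtlety, which is not really an obstacle but worth noting, is that the matching $M$ is defined on the indexing set $[c]$ rather than on the CWS closures themselves, so one must check that equal index pairs produce equal closures (i.e.\ that $M$ is well-defined modulo coincidences of the form $cl(\{u,u'\}) = cl(\{u,u''\})$). This is immediate because $\pi\hspace{-0.05cm}\mid_{[c]}$ is a bijection and the closure operator is symmetric in its two arguments, so any collapse of distinct index pairs on the $u$-side corresponds via $\pi$ to an analogous collapse on the $v$-side, and the induced map between the (possibly quotiented) spectra remains well-defined.
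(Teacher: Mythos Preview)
Your proof is correct and follows exactly the route the paper intends: the corollary is stated without proof as an immediate consequence of Theorem~\ref{thm:CWSclosures}, and you have spelled out precisely that deduction---use the recursive formula for $\textrm{WL}_k(u)$ from Theorem~\ref{thm:macrorecursion} to obtain a colour-preserving bijection $\pi$ on $V(G)$, restrict to $[c]$ via Corollary~\ref{tuples}, and then invoke Theorem~\ref{thm:CWSclosures} pairwise. Your final paragraph is not really needed: the spectra $C_u,C_v$ are most naturally read as families indexed by $[c]$ (i.e.\ multisets), so $\pi\!\mid_{[c]}$ already furnishes the matching without any quotienting, and the justification you give there (``$\pi$ is a bijection and the closure operator is symmetric'') would not by itself establish the collapse correspondence anyway.
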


\begin{example}
 Consider the case where for $u,u' \in [c]$ there is a unique prime closure $A = cl(\{u,u'\})$ such that if $B = cl(\{u,v\})$ is prime for some $v \in [c]$, then $A = B$. Then if $C= cl(\{x,x'\})$ is prime for $x,x' \in [c]$ then by Theorem \ref{thm:CWSclosures} $C$ must also unique in this sense.
\end{example}

\setcounter{equation}{0}
\section{Extended modular decomposition method}
\label{sec:decomposition}

At this point we note that the only currently known graphs for which the recursive $k$-dim WL method fails (in that it fails to recursively partition the relevant vertex set to its orbits) belong to $M_k$, containing non-isomorphic $k$-equivalent subgraphs. In particular, to the knowledge of the author, $k$-equivalent graphs of the general type in \cite{CFI} and \cite{Evdokimov99} are the only graphs for which the recursive 3-dim WL method is known to fail, in that the only known pairs of non-isomorphic 3-isoregular graphs with the same parameters are vertex-transitive.

With this in mind, in proposing a method of dealing with these specific counterexample graphs, we will assume for the purposes of this paper (and in particular, the following proposed decomposition method) that \emph{prime} $k$-equivalent graphs are characterised by the recursive $(k+1)$-dim WL method (where $k \ge 3$). One reason why this assumption might not be considered particularly onerous for the purposes of this paper is that, as shown in Chapter \ref{sec:ext}, if the original expander graph used to construct the counterexample pairs in \cite{CFI} and \cite{Evdokimov99} can be recursively partitioned down to its orbits by the $k$-dim WL method, then the $(k+1)$-dim WL method achieves also this for each of the counterexample pairs themselves.

\subsection{Preliminary definitions}

\begin{definition}
 The \emph{extended modular decomposition method} is defined as a process of isolating relevant CWS subgraphs of a graph.
\end{definition}

In particular, the aim is to isolate then characterise the mutually CWS, $k$-equivalent subgraphs, these being the components that provably cannot be partitioned to their orbits by the $k$-dim WL method. The title of this section stems from the analogous definition of a modular decomposition of a graph. The modules of a graph are subgraphs within which each element has the same set of neighbours among elements outside the module. Modules can be proper subsets of other modules, hence the term leads to a recursive decomposition of a graph, with the set of modules of a graph forming a lattice under inclusion. 

Similarly the set of CWS subsets of a graph also forms a lattice with respect to inclusion, as shown below, and can be thought of as a generalisation of the idea of modules, in this case relative to the colour classes assigned by the $k$-dim WL method. In this generalisation, only elements of the CWS subgraph of the same colour class are required to have identical neighbour sets outside the subgraph.

Relative to the standard definition of modules, the modular closure of set of vertices $S \in V(G)$ is defined to be the smallest module $R \in V(G)$ that contains $S$. Here, the term modular closure will instead be defined relative to this generalisation of modules, according to the following definition.

\begin{definition}
 The \emph{modular closure}, or simply \emph{closure}, of a set of vertices $S \in V(G)$, denoted $cl(S)$, is the smallest CWS subset of $G$ containing $S$ (i.e. the supremum of $S$, relative to CWS modules).
\end{definition}

This modular closure is defined relative to the colour classes arising from the $k$-dim WL method. There is a simple procedure to calculate the modular closure of a set $S$, introduced in Chapter \ref{sec:WLprops}. Consider any two elements $u,v \in S$ in the same colour class of $\textrm{WL}_k(G)$. Then the elements $(d(u) \cap e(v)) \cup (e(u) \cap d(v))$ must also be in $cl(S)$. Recursively performing this process until membership in $cl(S)$ is stabilised yields a unique $cl(S)$. Hence $cl(S)$ is well defined.

\begin{definition}
 A \emph{non-trivial} CWS subset is defined as one containing at least two elements of the same colour class.
\end{definition}

\begin{observation}
 The CWS subsets of a graph form a lattice, under inclusion.
\end{observation}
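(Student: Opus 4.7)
The plan is to verify that set inclusion restricts to a partial order on the family of CWS subsets (which is immediate) and that this family is closed under taking a greatest lower bound and a least upper bound for any pair $A,B$. The natural candidates are $A \cap B$ for the meet and $cl(A \cup B)$ for the join, so the two tasks are (i) to check that $A \cap B$ is itself CWS, and (ii) to recall that $cl(\,\cdot\,)$ is well defined and has the required universal property from the construction in Chapter \ref{sec:WLprops}.

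For the meet, I would argue directly from Definition \ref{def:CWS}. Fix $u,v \in A \cap B$ with $\theta(u) = \theta(v)$, and note that
\begin{align*}
 V(G) \setminus (A \cap B) \;=\; (V(G) \setminus A) \,\cup\, (V(G) \setminus B).
\end{align*}
Any $x$ on the right lies in $V(G) \setminus A$ or in $V(G) \setminus B$. In the first case, since $A$ is CWS and $u,v \in A$ share a cell, $x \in d(u)$ if and only if $x \in d(v)$; in the second case the same conclusion follows from $B$ being CWS. Hence $d(u)\mid_{G \setminus (A \cap B)} = d(v)\mid_{G \setminus (A \cap B)}$, so $A \cap B$ is CWS. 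Since any subset contained in both $A$ and $B$ is contained in $A \cap B$, this is the greatest lower bound.

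For the join, I would simply invoke the modular closure $cl(A \cup B)$. The constructive procedure given just before this observation (iteratively adjoining $(d(x) \cap e(y)) \cup (e(x) \cap d(y))$ for every same-coloured pair $x,y$ in the current set) terminates in a unique smallest CWS subset containing $A \cup B$. In particular, $cl(A \cup B)$ is CWS and contains both $A$ and $B$; and if $X$ is any CWS subset with $A, B \subseteq X$, then $A \cup B \subseteq X$, and minimality of $cl$ forces $cl(A \cup B) \subseteq X$. Hence $cl(A \cup B)$ is the least upper bound.

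The only step with any content is the verification that $A \cap B$ is CWS, and even there the argument is a one-line case split on which of $V(G) \setminus A$, $V(G) \setminus B$ an outside vertex belongs to; everything else is a direct appeal to the definitions already in place. The trivial CWS subsets $\emptyset$ and $V(G)$ supply a bottom and top, so the resulting structure is in fact a bounded lattice, though only the lattice claim is needed here.
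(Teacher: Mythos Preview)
Your proof is correct and follows essentially the same approach as the paper: the join is $cl(A \cup B)$ in both, and the meet is $A \cap B$ in both. The only cosmetic difference is that you verify $A \cap B$ is CWS by a direct case split on the complement, whereas the paper observes that since $A$ and $B$ are CWS sets containing $A \cap B$, minimality of the closure gives $cl(A \cap B) \subseteq A$ and $cl(A \cap B) \subseteq B$, hence $cl(A \cap B) = A \cap B$; these amount to the same thing.
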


\begin{proof}
 Consider a graph $G$, containing two CWS subsets $A$ and $B$. The modular closure $cl(A \cup B)$ defines a unique supremum (in terms of CWS subsets). Consider the intersection of the two subsets, $C = (A \cap B)$. Then $cl(C) \subseteq A$, $cl(C) \subseteq B$, and $cl(C) = sup(C)$, hence the modular closure $cl(A \cap B)$ defines a unique infimum.
\end{proof}

Recall the primality definition given in Section \ref{sec:orbit_counter}.

\begin{repdefinition}{def:prime}
 A subgraph $S \subseteq G$ will be termed \emph{prime} if it has no proper, non-trivial CWS subgraphs, and is itself a non-trivial CWS subgraphs of $G$. This is defined implicitly with respect to the $k$-dim WL method.
\end{repdefinition}

\begin{definition}
 Given a $k$-equivalent graph $G$ with colour classes corresponding to $\textrm{WL}_k(G)$, a set of colour classes $C = \{c_1, c_2,\ldots \}$ of $G$ will be termed to be \emph{trivial} in $G$ if the graph induced on $cl([C])$ is not $k$-equivalent.
\end{definition}

\begin{definition}
 Denote a CWS subset $S$ of $G$ to be non-trivial \emph{relative} to the colour class $c$ if $S$ contains more than one vertex belonging to $[c]$.
\end{definition}

\subsection{Graphs under consideration}

Recall that the set of graphs $M_k$ is defined in terms of $k$-equivalent subgraphs $S_i \subset G$, that are CWS connected with respect to the colour classes of $\textrm{WL}_{k}(S_i) \mid_{S_i}$, rather than the colour classes of $\textrm{WL}_{k}(S_i) \mid_G$. Hence it is possible that distinct colour classes of such an induced graph $S_i$ will be \emph{merged} in $G$, in that $\exists \, u,v \in V(S_i)$ such that
\begin{align*}
 &\textrm{WL}_k(u) \mid_{S_i} \neq \textrm{WL}_k(v) \mid_{S_i}, \textrm{ but} \\
 &\textrm{WL}_k(u) \mid_{G} = \textrm{WL}_k(v) \mid_{G}.
\end{align*}
In order to simplify the analysis of the decomposition method that follows, it will be defined to act on a subset of $M_k$ in which the properties of $\textrm{WL}_{k}(G)$ are constrained relative to the colour classes associated with the induced graphs $S_i$.

In particular, it will be defined to act on the set of graphs $M^{'}_k \subset M_k$ defined as follows.

\begin{definition}
 \label{assume:M}
 $M^{'}_k$ consists of the graphs $G \in M_k$ for which the following further properties hold.
 \begin{enumerate}
  \item Consider a set of vertices $\{v_1,\ldots,v_r\} \subset V(G)$ with the properties
  \begin{align}
   &\textrm{WL}_k(v_i) = c, \forall i \in [r]\textrm{, and} \notag\\
   &d(v_i) \backslash \{v_j\} = d(v_j) \backslash \{v_i\} \forall i, j \in [r].
  \end{align}
  No such set of `CWS cliques' exist in $G$.
  \item Prime CWS subgraphs of $G$ are unique, in the sense that for any $x,y,z$ belonging to the same colour class of $G$, $cl(\{x,y\})$ and $cl(\{x,z\})$ are both prime only if $cl(\{x,y\}) = cl(\{x,z\})$.
 \end{enumerate}
\end{definition}

The first condition removes the possibility of $G$ containing modules in which all elements belong to the same colour class, and the graph induced on the module is either complete or empty. Every subset of such a module is also a CWS subset of $G$. The first and second conditions are included to simplify the analysis of the decomposition algorithm presented later in this section. While they are listed as assumptions, we will see that both can be enforced without loss of generality, by canonically altering a given graph in $M_k$.

Before demonstrating that the first and second assumptions can be assumed to hold without loss of generality, further definitions will be required. Consider the following process of \emph{canonically contracting} a CWS subgraph $S$ of a graph $G$, replacing $S$ by a single vertex, with the resulting graph labelled by $G'$.

\begin{definition}[\textbf{Canonical Contraction}]
\label{def:contract}
 For each colour class $c_i \in \textrm{WL}_k(G)$ with members in $S$, denote
 $$[c_i]_S = \{ v \in [c_i] : v \in S \}.$$
 Since $S$ is a CWS subgraph of $G$, a vertex $w \in V(G \backslash S)$ can be said to be connected to a colour class of $S$, in that if $\{v,w\} \in E(G)$ for some $v \in [c_i]_S$, then $\{v',w\} \in E(G)$ for all $v' \in [c_i]_S$. 

 \noindent For each $w \in V(G \backslash S)$, denote by $w_S = \{(c_r,\gamma_1), \ldots, (c_t, \gamma_t)\}$ the set of colour classes $c_i$ such that $w$ is connected to $[c_i]_S$ by edges of colour $\gamma_i$.
 The \emph{canonical contraction} of $G$ relative to a subgraph $S \subset G$, resulting in a graph $G^{'}_S$, proceeds as follows.
 \begin{itemize}
  \item[(i)] Replace $S$ by a single vertex $x$ coloured by the isomorphism class of $S$.
  \item[(ii)] Replace the edges connecting $[c_i]_S$ to $w \in V(G \backslash S)$ by a single edge $\{x,w\}$ coloured by $c_i$, for all such $w$.
  \item[(iii)] Where $w \in V(G \backslash S)$ is connected to multiple colour classes in $S$, replace the resulting multiple edges by a single edge coloured by each such colour class, such that:
  \item[(iv)] Edges $\{w,x\}$ and $\{w^{'},x\}$ have the same colour if and only if $\textrm{Sort}(w^{}_S) = \textrm{Sort}(w'_S)$.
 \end{itemize}
\end{definition}

Hence this contraction replaces a CWS subgraph $S$ by a single vertex, while preserving all information regarding the isomorphism class of $S$ and its connections to $G \backslash S$, such that given graphs $G$ and $H$ with subgraphs $S_1$ and $S_2$ canonically contracted respectively, $G' \cong H'$ if and only if
$$S_1 \cong S_2 \;\textrm{ and }\; G \cong H.$$
The contraction is then unbiased (or canonical) in an analogous sense to the \emph{unbiased extension} of Chapter \ref{sec:ext}.

Given a graph $G \in M_k$, 
consider the following two contractions of $G$. 

\begin{construction}
\label{constr:1}
Firstly, for each colour class $c_i \in \textrm{WL}_k(G)$, let $\mathcal{S} = \{S_1,\ldots,S_t\}$ be the set of all \emph{maximum CWS cliques} in $G$ for which all subsets of each $S_i$ are also CWS subgraphs of $G$ (in other words, precisely the CWS subgraphs described in (1) of Definition \ref{assume:M}), where each $S_i$ is maximal in the sense that no $S^{'}_i \supset S_i$ exists with the same property . Note that the set of such cliques can be efficiently found for any graph $G$. Replace all such $S_i$ by a single vertex as in Definition \ref{def:contract}, repeat this process recursively until the result is stabilised, and label the resulting graph by $G_1$. 
\end{construction}

\begin{definition}
 \label{def:contract1}
 Given a graph $G$, $G_1$ is the resulting graph in which all \emph{maximum CWS cliques} are recursively contracted to single vertices as in Construction \ref{constr:1} above, such that $G_1$ contains no such subgraphs.
\end{definition}

\begin{construction}
\label{constr:2}
Secondly, given a subgraph $S \subset G_1$, let $[c_i]_S = \{ v \in [c_i] : v \in S \}$ denote the set of vertices of $S \subset G_1$ belonging to colour class $c_i$. Consider a CWS subgraph $R \in G_1$ with the following properties. For all $i,j$, where $v \in [c_i]_R$ and $c_j \in \textrm{WL}_k(G_1)$,
\begin{align}
 &d(v) \cap [c_i]_R = [c_i]_R \textrm{ or } \emptyset, \notag\\
 &|d(v) \cap [c_j]_R| = 0, \;\; 1, \;\; |[c_j]_R|-1 \textrm{ or }\; |[c_j]_R|.
\end{align}
Such a CWS subgraph $R$ has the property that any prime CWS subgraphs of $R$ have exactly two elements from each colour class in $R$. Any two prime CWS subgraphs of $R$ are either vertex disjoint, equal, or have an intersection comprising exactly one element from each colour class in $R$. Further, given non-equal prime CWS subgraphs $cl(\{x,y\})$ and $cl(\{x,z\})$ in $R$, where $x,y,z \in [c_i]_R$ for some $i$, the intersection $R_x = cl(\{x,y\}) \cap cl(\{x,z\})$ is independent of the particular $y,z$ chosen. Finally, the set $\{R_x : x \in [c_i]_R\}$ partitions $R$. Note that the set of subgraphs $R$ satisfying the above properties can be efficiently found for any graph $G$, and this set forms a lattice in $G$ with respect to inclusion.

Replace each such vertex-disjoint $R_x$, belonging to such a subgraph $R \subset G_1$, with a single vertex as in Definition \ref{def:contract}, repeat this process recursively until the result is stabilised, and label the resulting graph by $G_2$.
\end{construction}

\begin{definition}
 \label{def:contract2}
 Given a graph $G_1$, $G_2$ is the resulting graph in which all subgraphs of the type described above (and those associated with $G_1$) are recursively contracted as in Construction \ref{constr:2}, such that $G_2$ contains no such subgraphs.
\end{definition}

\begin{theorem}
Let $G$ be a graph in $M_k$, and let $G_1$ and $G_2$ be the graphs resulting from $G$ by the contractions described above. Then the following statements hold. 
 \begin{itemize}
  \item[(i)] $G_1$ satisfies condition (1) of Definition \ref{assume:M}.
  \item[(ii)] $G_2$ satisfies conditions (1) and (2) of Definition \ref{assume:M}.
 \end{itemize}
\end{theorem}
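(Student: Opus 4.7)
The plan is to handle (i) and (ii) in sequence, with (i) essentially immediate from Construction \ref{constr:1} and the real work lying in (ii). Throughout I will lean heavily on Theorem \ref{thm:CWSclosures} to transport structural information between different prime CWS closures that share a color class.

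For (i), I would first observe that Construction \ref{constr:1} terminates because each contraction strictly reduces the vertex count, replacing at least two vertices by one. At termination, no maximum CWS clique of the kind described in condition (1) of Definition \ref{assume:M} remains in $G_1$. Since any CWS clique is contained in some maximum one, $G_1$ contains no such cliques at all. A minor check is needed to confirm that successive contractions cannot create new CWS cliques which then persist; this is handled by the fact that Construction \ref{constr:1} is recursive until stable, so any such new cliques are themselves contracted.

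For (ii), I would first argue that applying Construction \ref{constr:2} to $G_1$ preserves condition (1). The intersections $R_x$ being contracted are not themselves CWS cliques in the sense of (1): by the very degree hypotheses on the ambient $R$, they span several color classes with a $0/1/|[c_j]_R|-1/|[c_j]_R|$ intersection pattern that rules out the uniform internal-versus-external behaviour required of a CWS clique. After contraction, each new vertex is canonically coloured by the isomorphism type of the contracted $R_x$ (cf.\ Definition \ref{def:contract}), so vertices produced by non-isomorphic $R_x$'s carry different colours and cannot collectively form a CWS clique either. Any residual cases that arise from the contraction are eliminated by again recursing until stable.

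The main obstacle is establishing condition (2) for $G_2$, which I would attack by contradiction. Suppose $x,y,z$ lie in a common color class of $G_2$ and both $A = cl(\{x,y\})$ and $B = cl(\{x,z\})$ are prime with $A \neq B$. By Theorem \ref{thm:CWSclosures}, $A$ and $B$ have identical color profiles and induce $k$-similar subgraphs. A first observation is that $A \cap B$ can contain no vertex of color $\textrm{WL}_k(x)$ other than $x$ itself: otherwise the CWS closure of $x$ with such a vertex would be a non-trivial CWS subgraph inside both primes, forcing $A = B$ by primality. Next I would consider $R := cl(A \cup B)$ and verify that it satisfies the structural hypotheses of Construction \ref{constr:2}: the symmetry of $A$ and $B$ across $x$, together with Theorem \ref{thm:CWSclosures} and the primality constraint, forces each vertex of $R$ in each color class $c_j$ to be joined to exactly $0$, $1$, $|[c_j]_R|-1$, or all of $[c_j]_R$, while the partial overlap of $A$ and $B$ supplies the two-element prime structure described there. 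Under these conditions $A \cap B$ coincides with one of the canonical blocks $R_x$ that Construction \ref{constr:2} contracts, so in $G_2$ the vertex $x$ is itself a composite contracted image and admits a unique prime closure through it, contradicting the existence of two distinct primes $A,B$ at $x$. The hardest step is the structural identification of $R$ as a subgraph of exactly the type targeted by Construction \ref{constr:2}; once that bridge is built, the remainder reduces to routine lattice-theoretic bookkeeping with CWS closures.
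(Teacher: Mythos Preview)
Your approach is essentially the paper's: argue by contradiction that two distinct prime closures $A=cl(\{x,y\})$ and $B=cl(\{x,z\})$ through a common vertex $x$ force $A\cap B$ (with one vertex per colour class) to be exactly the block type $R_x$ that Construction~\ref{constr:2} contracts, contradicting stability of $G_2$. The paper carries out the ``hardest step'' you flag by explicit degree counting on $A\cup B$ itself (rather than passing to $R=cl(A\cup B)$): it shows that within $A\cup B$ each colour class induces a complete or empty graph and cross-class connections satisfy the $0,\,1,\,|[c_j]|-1,\,|[c_j]|$ pattern, then runs the iterative closure construction of Chapter~\ref{sec:WLprops} to pin down $|[c_m]\cap(A\cup B)|=3$ and $|[c_m]\cap(A\cap B)|=1$ for every colour class $c_m$ present. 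One small preparatory step you omit but the paper uses: it first rules out $A=\{x,y\}$ by showing $\{x,y\}\subset cl(A\setminus\{x,y\})$, invoking condition~(1) already secured for $G_1$; you should include this, as otherwise the degenerate two-vertex prime case slips through your degree analysis. Your choice to work with $cl(A\cup B)$ rather than $A\cup B$ is harmless in spirit but would require checking that the closure doesn't introduce extra vertices that spoil the degree pattern; the paper sidesteps this by staying inside $A\cup B$.
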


\begin{proof}
 By definition, if $G_1$ contains such CWS cliques, then $G_1$ is not stable, in that the contraction described can be applied to $G_1$ resulting in some graph $(G_1)_1 \neq G_1$. Hence (i) holds.

 Let $A = cl(\{x,y\})$, $B = cl(\{x,z\})$ be prime CWS subgraphs of $G_2$, where $x,y,z \in [c_i]$ for some colour class $c_i \in \textrm{WL}_k(G_2)$. 

 Let $\{x,y\} \notin cl(A \backslash \{x,y\})$. Then $A \backslash \{x,y\}$ contains at most one vertex of each colour class, else $A$ is not prime. Similarly, either $A = \{x,y\}$ or there exists a $v \in A, v \notin \{x,y\}$ such that $v \in d(x), v \in e(y)$ or $v \in e(x), v \in d(y)$. Since $A$ is prime, $d(x) \cap \overline{A} = d(y) \cap \overline{A}$, however this contradicts the assumption that $x,y \in [c_i]$, since $x$ and $y$ must have a different number of neighbours in the colour class of $v$. Hence no such $v$ exists, and $A = \{x,y\}$. As no such CWS cliques exist in $G_1$, this is a contradiction, and so we must have $\{x,y\} \in cl(A \backslash \{x,y\})$.

 Now $A \cap B$ contains at most one vertex from each colour class in $A$ and $B$ (else $A,B$ are not prime). Since $x,y,z \in [c_i]$, for any colour class $c_j$, $|d(x) \cap [c_j]| = |d(y) \cap [c_j]| = |d(z) \cap [c_j]|$. In particular, $x$ and $y$ have the same connections outside $A$, $x$ and $z$ have the same connections outside $B$, and $y$ and $z$ have the same connections outside $(A \cup B) \backslash (A \cap B)$. Denote $[c_j]_A = [c_j] \cap A$. Then $d(x) \cap [c_j]_A = d(y) \cap [c_j]_A$ for each colour class $c_j$, and similarly for $x,z$ in $B$ and $y,z$ in $(A \cup B) \backslash (A \cap B)$.

 Consider a colour class $c_j$ with elements in $A \cup B$, where $J = [c_j]_{A \cup B}$. 
 Let $J' = J \cap (A \cap B)$. If $J' \neq \emptyset$, then $J' = \{v\}$ for some $v \in (A \cap B)$. Then either $J \backslash \{v\} \subset d(v)$, or $J \backslash \{v\} \subset e(v)$, and hence the graph induced on $J$ is either complete or empty. Trivially, $|J \cap A| = |J \cap B|$ holds.

 Consider a second colour class $c_l$, where $L = [c_l]_{A \cup B}$, such that $c_l$ also has non-empty intersection, $L \cap (A \cap B) = v'$. As in the connections \emph{within} $[c_j]_{A \cup B}$, if $v' \in d(u)$ for some $u \in [c_j]_A$, then $L \cap B \subset d(u)$, which in turn implies $(L \cap B) \backslash \{v'\} \subset d(u')$ for all $u' \in [c_j]_A$, and $v' \in d(u')$ for all $u' \in ([c_j]_A \backslash \{v\}$. Hence $|d(v') \cap J| \ge |J|-1$, implying that $|d(w) \cap J| \ge |J|-1$ for all $w \in K$.

 So far we have established a basic structure for the connections between and within colour classes of $A$ and $B$. Namely, if $A \cap B$ contains a vertex $v \in [c_j]$, then the graph induced on $[c_j]_{A\cup B}$ is either complete or empty. Further, if $v' \in [c_l]$ such that $v' \in (A \cap B)$, then the connections \emph{between} $[c_j]$ and $[c_l]$ within $A \cup B$ are either uniform or `almost uniform', in that:
 \begin{align}
  &\forall u \in J, |d(u) \cap L| = |L| \textrm{ or } |L-1|, \textrm{ and hence} \notag\\
  &\forall u \in L, |d(u) \cap J| = |J| \textrm{ or } |J-1|.
 \end{align}
 Given these constraints on $A\cup B$, consider the iterative process of constructing $A$ and $B$ from $\{x,y\}$ and $\{x,z\}$ respectively, described in Chapter \ref{sec:WLprops}. At the first step, consider the set $D = \{v_1,\ldots,v_t\}$, where for all $v_i \in D$,
 \begin{align}
  v_i \in (d(x) \cap e(y)) \textrm{ or } v_i \in (d(y) \cap e(x)).
 \end{align}
 Let $v_i \in [c_j]$ for some $v_i \in D$. As the connections between $\{x,y\}$ and $[c_j]_A$ are uniform or almost uniform in the above sense, either $d(x) \cap [c_j] = d(y) \cap [c_j]$ or $|D \cap [c_j]| = 2$, one vertex of which necessarily belongs to $A \cap B$. Iterating this process, if $|[c_m] \cap (A \cup B)| > 0$ for any colour class $c_m$, then
 \begin{align}
  &|[c_m] \cap (A \cup B)| = 3, \notag\\
  &|[c_m] \cap (A \cap B)| = 1, \textrm{ and}\\
  &|[c_m] \cap A| = |[c_m] \cap B| = 1. \notag
 \end{align}
 Hence $A \cap B$ is precisely the (trivial) CWS subgraph type that is contracted in the process described in \ref{def:contract2} above. As no such subgraphs exist in $G_2$, no such intersecting prime CWS subgraphs $cl(\{x,y\})$ and $cl(\{x,z\})$ exist, and hence prime CWS subgraphs of $G_2$ are in this sense unique.
\end{proof}

\begin{corollary}
 Any $G \in M_k$ can be assumed without loss of generality to satisfy conditions (1) and (2) of Definition \ref{assume:M}, by instead considering the graph $G_2$ obtained by recursively applying the two contraction process to $G$ described above.
\end{corollary}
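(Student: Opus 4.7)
The plan is to justify the phrase ``without loss of generality'' by establishing two complementary claims: (a) the canonical contraction is information-preserving, so that characterising $G_2$ together with the isomorphism classes of the contracted pieces characterises $G$ up to isomorphism; and (b) the contractions never destroy the non-isomorphic mutually CWS $k$-equivalent structure that defines membership in $M_k$, so that $G_2$ inherits this structure from $G$ and the decomposition analysis of the remainder of Chapter \ref{sec:decomposition} applies to $G_2$ with conclusions that lift back to $G$.

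For (a), I would appeal directly to Definition \ref{def:contract}. Each contracted subgraph $S$ is replaced by a single vertex coloured by $\textrm{iso}(S)$, and each edge leaving $S$ is replaced by an edge whose colour encodes both the colour class of its endpoint inside $S$ and the full multiset $w_S$ of classes to which $w \in V(G \backslash S)$ attaches. Hence, given $G_2$ and the isomorphism classes of the contracted pieces, $G$ can be reconstructed canonically. Iterating, since Constructions \ref{constr:1} and \ref{constr:2} are themselves canonical (they act on classes of subgraphs defined purely in terms of $\textrm{WL}_k$ colourings, which are graph invariants), the entire contraction $G \mapsto G_2$ is canonical, and $G \cong H$ iff $G_2 \cong H_2$ together with a matching of contracted pieces. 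Moreover, the contracted pieces are themselves of a very restricted form (CWS cliques in a single colour class, or the intersection pieces $R_x$ analysed in the preceding theorem), both of which admit a trivial canonical description and so do not require any further nontrivial $k$-dim WL analysis.

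For (b), note that the subgraphs targeted by Constructions \ref{constr:1} and \ref{constr:2} are characterised by properties incompatible with being non-isomorphic mutually CWS $k$-equivalent pairs. A maximum CWS clique of type (1) in Definition \ref{assume:M} lies inside a single colour class with all subsets also CWS; any such subgraph is vertex-transitive under Aut and so cannot contain two non-isomorphic $k$-equivalent components. Similarly, the intersection pieces $R_x$ arising from Construction \ref{constr:2} were shown in the preceding theorem to contain at most one vertex of each colour class, so they are trivially partitioned to their orbits by the $1$-dim WL method and again cannot house distinct $k$-equivalence classes. Hence if $G \in M_k$ by virtue of containing non-isomorphic mutually CWS $k$-equivalent subgraphs $S_1, S_2$, these subgraphs are disjoint from the contracted pieces, and their images survive in $G_2$ together with their CWS-connection structure (the edge colourings introduced by Definition \ref{def:contract} preserve the CWS property). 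Thus $G_2 \in M_k$, and now satisfies conditions (1) and (2) of Definition \ref{assume:M} by the preceding theorem.

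The main obstacle I foresee is verifying that the recursion in Constructions \ref{constr:1} and \ref{constr:2} stabilises at a well-defined $G_2$ independent of the order in which subgraphs are contracted, and that no ``new'' targetable subgraphs are spuriously created by the contractions themselves in a way that would spoil canonicity. Termination follows from strict monotonicity in vertex count, but canonicity requires appealing to the lattice structure of CWS subgraphs (Observation following Definition \ref{def:contract2}): the sets of maximum CWS cliques and the permissible $R$-type subgraphs form meet-closed families, so contracting a maximal element of one of these families yields a graph whose corresponding family is again determined canonically by the $k$-dim WL colouring, and the recursion may be organised as a confluent rewriting system. Once confluence is established, the corollary follows from (a) and (b).
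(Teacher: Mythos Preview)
The paper gives no proof of this corollary at all: it is stated as an immediate consequence of the preceding theorem (which shows $G_2$ satisfies (1) and (2)) together with the remark after Definition~\ref{def:contract} that canonical contraction preserves isomorphism, i.e.\ $G\cong H$ iff $G_2\cong H_2$. Your part~(a) is exactly this reasoning and is correct; that alone is already the paper's entire justification for ``without loss of generality.''

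Your part~(b) and the confluence discussion go well beyond what the paper supplies. These are legitimate points the paper glosses over, and your instinct to raise them is sound. One small overstatement: you claim the non-isomorphic mutually CWS $k$-equivalent subgraphs $S_1,S_2$ are \emph{disjoint} from the contracted pieces. That need not hold---an $S_i$ may well contain a CWS clique or an $R_x$ piece. What actually matters is that the contraction, being canonical and applied uniformly to all such pieces (including those inside $S_1$ and $S_2$), maps $S_1,S_2$ to images in $G_2$ that remain non-isomorphic and mutually CWS; this follows from the isomorphism-preservation property of Definition~\ref{def:contract} applied inside each $S_i$, not from disjointness. With that correction your argument for $G_2\in M_k$ goes through. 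The confluence concern you raise is real and is not addressed in the paper; your sketch via the lattice structure is the right direction but would need more work to be airtight.
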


Note that the graphs of interest within $M^{'}_k$ contain those known to not be partitioned down to their orbits by the $k$-dim WL method. For such a graph $G$, there will exist at least one colour class $c \in \textrm{WL}_k(G)$ consisting of two or more orbits of Aut($G$). 

\begin{lemma}
\label{thm:notprime}
 Let $G \in M^{'}_k$ contain two vertex-disjoint, non-trivial CWS, mutually $k$-similar subgraphs $S_1, S_2$, such that $u \sim_k v$ within $G$, for some $u \in V(S_1)$, $v \in V(S_2)$. Then $cl(\{u,v\})$ is not prime.
\end{lemma}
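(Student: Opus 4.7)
The plan is a proof by contradiction. Assume that $cl(\{u,v\})$ is prime, and I will exhibit inside it a proper, non-trivial CWS subgraph, giving the contradiction. The natural candidate is $X := cl(\{u,v\}) \cap S_1$. Both $cl(\{u,v\})$ and $S_1$ are CWS subgraphs of $G$, and their set-theoretic intersection is again CWS: for any same-colour pair $x,y \in cl(\{u,v\}) \cap S_1$, one has $d(x) \setminus (cl(\{u,v\}) \cap S_1) = (d(x) \setminus cl(\{u,v\})) \cup (d(x) \setminus S_1)$, which equals the analogous union for $y$ by invoking the CWS properties of $cl(\{u,v\})$ and $S_1$ separately. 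Properness of $X$ inside $cl(\{u,v\})$ is immediate since $v \in cl(\{u,v\}) \setminus S_1 \subseteq cl(\{u,v\}) \setminus X$. Thus the entire content of the proof is in showing that $X$ is non-trivial, i.e.\ that $cl(\{u,v\})$ contains two distinct vertices of $S_1$ sharing a common $\textrm{WL}_k$ colour class of $G$.

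To prove non-triviality I would run the iterative closure construction of Chapter \ref{sec:WLprops} starting from $\{u,v\}$ and exploit the asymmetric position of $u \in S_1$ and $v \notin S_1$. Because $v$ lies outside $S_1$ and $S_1$ is CWS in $G$, $v$'s view of each set $[c''] \cap S_1$ is uniform: either all or none of $[c''] \cap S_1$ sits in $d(v)$. The vertex $u$, by contrast, has arbitrary adjacency inside $S_1$. Since $S_1$ is non-trivial CWS and Assumption 1 of $M'_k$ forbids CWS cliques in $G$, I can locate some colour class $c''$ with $|[c''] \cap S_1| \ge 2$ along which $u$'s adjacency to $[c''] \cap S_1$ is not uniform; otherwise $[c''] \cap S_1$ would itself constitute a CWS clique. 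The whole symmetric difference between $u$'s and $v$'s adjacency patterns into $[c''] \cap S_1$ then enters $R(u,v)$, and hence $X$, at the first closure step. When $|[c''] \cap S_1| \ge 3$, pigeonhole immediately delivers two vertices of $X$ in the common colour class $[c'']$, completing the argument.

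The main obstacle is the delicate case in which every relevant $c''$ satisfies $|[c''] \cap S_1| = 2$, so that the first closure step may deposit only a single $c''$-vertex of $S_1$ into $X$. My plan for this case is to iterate: by mutual $k$-similarity of $S_1$ and $S_2$ (Corollary~4.8), the symmetric argument produces a mirrored singleton vertex in $Y := cl(\{u,v\}) \cap S_2$, and the same-colour pair formed between this new $S_2$-vertex and its $S_1$-partner, together with further same-colour pairs generated from $\{u,v\}$ themselves, forces additional $c''$-vertices of $S_1$ into $X$ at subsequent closure steps. Should the iteration ever stall before $X$ acquires a same-colour pair, the terminal configuration would force $u$'s adjacency inside $S_1$ to match, colour-class by colour-class, the uniform external pattern of $v$, which combined with the non-triviality and CWS property of $S_1$ again reconstructs a CWS clique in $G$ forbidden by Assumption 1 of $M'_k$. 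In every case $X$ is non-trivial, yielding the proper non-trivial CWS subgraph of $cl(\{u,v\})$ and contradicting the assumed primality.
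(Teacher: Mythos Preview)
Your opening move is the same as the paper's: set $A = cl(\{u,v\}) \cap S_1$ (your $X$) and $B = cl(\{u,v\}) \cap S_2$, note that intersections of CWS subgraphs are CWS, and aim for a proper non-trivial CWS subgraph inside $cl(\{u,v\})$. From that point on, however, your argument diverges from the paper and runs into real trouble.

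First, two concrete errors. Your pigeonhole claim for $|[c''] \cap S_1| \ge 3$ is false: if $v$ sees none of $[c''] \cap S_1$ and $u$ sees exactly one, the symmetric difference contributes a single vertex, not two, so no same-colour pair enters $X$ at step one. Second, the deduction ``otherwise $[c''] \cap S_1$ would itself constitute a CWS clique'' does not follow. Condition (1) of $M'_k$ forbids a set of same-colour vertices with \emph{identical} neighbourhoods; it says nothing about $u$'s adjacency into such a set being uniform. Knowing that $u$ sees all-or-none of each $[c''] \cap S_1$ gives no information about whether the vertices of $[c''] \cap S_1$ see each other or the rest of $G$ identically. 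Your ``delicate case'' iteration inherits both problems and never closes.

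The missing idea is that the paper does \emph{not} try to force $A$ to be non-trivial. It instead accepts that $A$ and $B$ may each contain at most one vertex per colour class, and shows this hypothesis pins down $u$'s adjacency inside $S_1$ to $|d(u) \cap R_1(i)| \in \{0,1,|R_1(i)|-1,|R_1(i)|\}$ for every colour class $c_i$ (this is exactly what the first closure step forces, since $v$'s view of $R_1(i)$ is uniform). By $k$-similarity the same holds for every same-colour vertex $w$ in $S_1$, and one recognises the resulting structure as precisely the $R$-type subgraph of Construction~\ref{constr:2}, whose pieces $R_x$ (here $A$) were contracted away in forming $G_2$. The contradiction therefore comes from condition~(2) of Definition~\ref{assume:M}, not condition~(1). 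Your proposal never invokes condition~(2), which is why the argument cannot be completed along the lines you sketch.
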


\begin{proof}
 Assume that $cl(\{u,v\})$ is prime, and let $A = cl(\{u,v\}) \cap S_1$ and $B = cl(\{u,v\}) \cap S_2$. $A$ and $B$ are each CWS subgraphs of $G$, so either $cl(\{u,v\})$ is not prime, or $A$ and $B$ each contain at most one vertex from any given colour class.

 Let $u,v \in [c]$ for some colour class $c \in \textrm{WL}_k(G)$. Then for all $x \in A, x \in [c_i]$ there exists a $y \in B$ such that $y \in [c_i]$ and vice versa. Hence $A$ and $B$ are mutually $k$-equivalent. Let $R_1(i)$ and $R_2(i)$ represent the sets of vertices of colour class $c_i$ in $S_1$ and $S_2$ respectively. Since $S_1$ and $S_2$ are each CWS subsets of $G$,
 \begin{align}
  d(u) \cap R_2(i) = R_2(i) \textrm{ or } \emptyset,
 \end{align}
 and similarly for $d(v)$. As $|cl(\{u,v\}) \cap R_1(i)| = 1 \textrm{ or } 0$, then
 \begin{align}
  d(u) \cap R_1(1) &= R_1(1) \backslash u \textrm{ or } \emptyset \textrm{, and} \notag\\
  |d(u) \cap R_1(i)| &= |R_1(i)|, 0, 1 \textrm{ or } |R_1(i)|-1,
 \end{align}
 and similarly for $d(v)$ in $S_2$. Note that since $S_1$ and $S_2$ are mutually $k$-equivalent,
 \begin{align}
  |d(u) \cap R_1(i)| = |d(v) \cap R_2(i)| \;\forall i.
 \end{align}
 Hence if $S_1$ contains another vertex $w \neq x$ of colour class $[c]$, $cl(\{x,w\})$ is also prime, and the sets $A$ and $B$ are precisely those contracted to a single vertex by the contraction process of Definition \ref{def:contract2}. Hence no such prime closure $cl(\{x,y\})$ exists.
\end{proof}

\subsection{Decomposition method}

Following the above introductory definitions and properties, we can now define the following method of decomposing $G \in M^{'}_k$ into the mutually CWS, mutually $k$-equivalent $S_i$ subsets that are prime.

\begin{algorithm}[\textrm{Decomposition}]
\label{alg:decomposition}
  Given a graph $G \in M^{'}_k$, we define the following extended modular decomposition method:
 \begin{enumerate}
  \item Act on $G$ with the $k$-dim WL method, determining the colour classes $\textrm{WL}_k(G)$ (relative to which the CWS subsets are defined).
  \item Choose a vertex $u$ (without loss of generality) from the lexicographically smallest colour class, $c$.
  \item For each vertex $v \in [c], v \neq u$, calculate $cl(\{u,v\})$, and determine if this closure is prime.
  \item If no such prime closure exists, remove this colour class from consideration, returning to step (3). Else record the unique prime closure $cl(\{u,v\})$ (for some appropriate $v$).
  \item Repeat steps (2)-(4) recursively on the remaining elements of $\left[ c \right]$ (not currently contained in a prime closure) until all elements of $[c]$ are associated with a prime CWS subgraph.
  \item Repeat steps (2)-(5) recursively for the remaining colour classes not incorporated in the previous prime closures. 
 \end{enumerate}
\end{algorithm}

\begin{lemma}
\label{thm:assorted}
 Let $G \in M^{'}_k$ be a graph as in definition \ref{alg:decomposition} above. For each colour class $c \in \textrm{WL}_k(G)$, the following hold:
 \begin{itemize}
  \item[(i)] There exists a partitioning $\{V_1,V_2,\ldots\}$ of $[c]$ such that all $cl(V_i)$ are prime, and all $cl(V_i)$ subgraphs are mutually $k$-similar.
  \item[(ii)] If a $cl(V_i)$ as above contains vertices of colour class $c'$, then $\{cl(V_1),cl(V_2),\ldots \}$ partitions $[c']$ in the same manner.
 \end{itemize}
\end{lemma}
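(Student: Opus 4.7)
The plan is to exploit two structural facts from the preceding chapter: the uniqueness clause (2) of Definition \ref{assume:M} (two prime closures $cl(\{x,y\})$, $cl(\{x,z\})$ of elements of a common colour class must coincide), and the CWS-spectrum matching of Corollary \ref{spectrum} combined with Theorem \ref{thm:CWSclosures}. For part (i), I would fix a colour class $c$ and, assuming some prime closure $cl(\{u_0,v_0\})$ with $u_0,v_0\in[c]$ exists, attach to each $u\in[c]$ the closure $P(u):=cl(\{u,v\})$ for $v\in[c]\setminus\{u\}$ such that $cl(\{u,v\})$ is prime. Condition (2) of $M^{'}_k$ makes $P(u)$ well-defined when it exists, and the CWS-spectrum match (Corollary \ref{spectrum} together with Theorem \ref{thm:CWSclosures}(3)) forces $P(u)$ to exist for every $u\in[c]$, since the matching preserves primality across any two $[c]$-vertices.

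To verify that the $P(u)$ define the desired partition, I would observe that if $v\in P(u)\cap([c]\setminus\{u\})$ then $cl(\{u,v\})$ is a non-trivial CWS subset of the prime $P(u)$, so $cl(\{u,v\})=P(u)$, whence $P(v)=cl(\{v,u\})=P(u)$. The equivalence classes $V_i:=[c]\cap P(u_i)$ therefore partition $[c]$ and satisfy $cl(V_i)=P(u_i)$. Mutual $k$-similarity of the $cl(V_i)$ then drops out from applying the spectrum matching to any two representatives $u_i,u_j\in[c]$: the unique prime entries of $C_{u_i}$ and $C_{u_j}$ must be paired, and Theorem \ref{thm:CWSclosures}(4) identifies the paired subgraphs as $k$-similar.

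For part (ii), I would fix $cl(V_i)$ meeting $[c']$ and let $m:=|cl(V_i)\cap[c']|\ge 1$, which by Theorem \ref{thm:CWSclosures}(2) is shared by every $cl(V_j)$. When $m\ge 2$, for any $w,w'\in cl(V_i)\cap[c']$ the inclusion $cl(\{w,w'\})\subseteq cl(V_i)$ combined with primality forces $cl(\{w,w'\})=cl(V_i)$. Applying part (i) to the colour class $c'$ then produces a prime partition of $[c']$; each such prime cell contains at least two $[c]$-vertices by Theorem \ref{thm:CWSclosures}(2), and for any two such $u,u'$ inside a cell we have $cl(\{u,u'\})$ equal to the cell (by primality) and simultaneously equal to some $cl(V_j)$ (by uniqueness condition (2)). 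Hence $\{cl(V_j)\cap[c']\}$ is precisely the partition of $[c']$ produced by applying part (i) to $c'$, and in particular covers $[c']$.

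The main obstacle I expect is the case $m=1$, where each $cl(V_j)\cap[c']$ is a single vertex $w_j$ and no prime closure of two $[c']$-vertices arises to give a direct $[c']$-partition. For distinctness I would analyse the CWS intersection $cl(V_i)\cap cl(V_j)$: it contains no $[c]$-vertex since $V_i\cap V_j=\emptyset$, and if $w_i=w_j$ were to hold, chasing this coincidence through the iterative CWS-closure procedure of Chapter \ref{sec:WLprops} and invoking condition (1) of $M^{'}_k$ (excluding trivial CWS cliques) should yield a proper non-trivial CWS subset of the prime $cl(V_i)$, contradicting primality. Covering would follow by transporting the structural information around $w_1\in cl(V_1)$ to an arbitrary $w\in[c']$ via the higher-tuple extension of Corollary \ref{cor:extend}, forcing $w$ into a CWS subset $k$-similar to $cl(V_1)$, which by the uniqueness argument from the $[c]$-perspective must coincide with some $cl(V_j)$.
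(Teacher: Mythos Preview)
Your argument for (i) is essentially the paper's own: both rest on the uniqueness clause (2) of Definition \ref{assume:M} to make $P(u)$ well-defined, and on the primality-preserving spectrum match of Corollary \ref{spectrum} with Theorem \ref{thm:CWSclosures}(3),(4) to propagate existence of a prime closure to every $u\in[c]$ and to obtain mutual $k$-similarity. For (ii) the paper's proof is literally the phrase ``and as a corollary, (ii) holds''; your case split on $m=|cl(V_i)\cap[c']|$ and the separate treatment of $m=1$ go well beyond what the paper records, but the tools you invoke (Theorem \ref{thm:CWSclosures}(2), the iterative closure procedure, and condition (1) of $M'_k$) are precisely those available in the paper's framework, so this is elaboration of the same route rather than a different one.
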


\begin{proof}
Let $u,v \in [c]$ such that $cl(\{u,v\})$ is prime. Then for all $w \in [c]$, $cl(\{u,w\})$ is prime if and only if $w \in cl(\{u,v\})$. Similarly, by Theorem \ref{thm:CWSclosures}, each vertex $u \in [c]$ has sets of pairwise CWS closures sharing several properties. In particular, for all $u \in [c]$ there is a unique prime closure containing $u$, and furthermore for any $u,v \in [c]$, the prime closures containing $u$ and $v$ respectively are either $m$-equivalent (for some $m \ge k$) or isomorphic. Hence (i) holds, and as a corollary, (ii) holds.
\end{proof}

\begin{corollary}
 Any two prime CWS subgraphs resulting from Algorithm \ref{alg:decomposition} which have overlapping colour classes are $k$-similar. 
\end{corollary}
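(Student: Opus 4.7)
The plan is to derive the corollary as an immediate consequence of Lemma \ref{thm:assorted}, together with the uniqueness clause (condition (2) of Definition \ref{assume:M}) which guarantees that prime CWS closures in $G \in M^{'}_k$ are unique. Let $P_1$ and $P_2$ be two prime CWS subgraphs output by Algorithm \ref{alg:decomposition}, and let $c \in \textrm{WL}_k(G)$ be a colour class with $V(P_i) \cap [c] \neq \emptyset$ for $i = 1, 2$.

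First I would identify each $P_i$ with a block of the canonical partition of $[c]$ furnished by Lemma \ref{thm:assorted}(i). If $P_i$ was recorded by the algorithm as a prime closure $cl(\{u_i, v_i\})$ with starting colour class $c$, then $\{u_i, v_i\}$ lies in some block $V_{j_i}$ of that partition, and the uniqueness clause of Definition \ref{assume:M}(2) forces $P_i = cl(V_{j_i})$. Otherwise $P_i$ was recorded from a different starting colour class $c''_i \neq c$; I would then invoke part (ii) of Lemma \ref{thm:assorted} to transfer the relevant partition from $[c''_i]$ to $[c]$, concluding that the blocks $cl(V_1), cl(V_2), \ldots$ of the $[c''_i]$-partition also partition $[c]$ in the same manner, so $P_i$ again coincides with one of the blocks $cl(V_{j_i})$ of this shared partition.

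Once $P_1$ and $P_2$ have been recognised as blocks of the same partition of $[c]$, part (i) of Lemma \ref{thm:assorted} immediately gives $P_1 \sim_k P_2$, i.e. $\textrm{WL}_k(P_1) \mid_G = \textrm{WL}_k(P_2) \mid_G$, which is precisely the required $k$-similarity.

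The main obstacle is purely expository, namely, carefully handling the case in which $P_1$ and $P_2$ were discovered from different starting colour classes, by repeatedly applying Lemma \ref{thm:assorted}(ii) (and, if needed, Theorem \ref{thm:CWSclosures} to match up the colour-class profiles of corresponding blocks) to verify that all relevant partitions agree. Since no new combinatorial content is needed beyond these earlier results, the proof reduces to this bookkeeping of matching up partitions.
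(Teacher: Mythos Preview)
Your proposal is correct and follows essentially the same approach as the paper: the corollary is stated in the paper without a separate proof, being treated as an immediate consequence of Lemma \ref{thm:assorted}. Your explicit bookkeeping via parts (i) and (ii) of that lemma, together with the uniqueness clause of Definition \ref{assume:M}(2), is exactly the intended derivation, just spelled out in more detail than the paper bothers to give.
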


The set of colour class partitions outputted by the above decomposition method are used to characterise the graph $G \in M^{'}_k$, according to the following `wrapper' algorithm.

\begin{algorithm}[\textbf{Reduction}]
 \label{alg:wrapper}
 Denote the set of prime, vertex-disjoint, CWS subgraphs obtained from the method of Algorithm \ref{alg:decomposition} by $\mathcal{T}_G$.
 \begin{enumerate}
  \item Apply the recursive $k$-dim WL method to each $T \in \mathcal{T}_G$, obtaining a graph certificate characterising each\footnote{By assumption, prime $k$-equivalent graphs are characterised by the recursive $k$-dim WL method. Further, we assume that non-$k$-equivalent graphs are also characterised by the recursive $k$-dim WL method.}.
  \item Canonically contract each $T \subset G$ as described in Definition \ref{def:contract}, replacing it with a single vertex  $v_T$ coloured by the isomorphism class of $T$, and similarly contracting the edges incident with $T$ as in Definition \ref{def:contract}.
  \item Label the resulting graph $G^{(1)}$ (where $G := G^{(0)}$).
  \item Recursively repeat steps (1)-(3), obtaining the graph $G^{(i)}$ after the $i^{\textrm{th}}$ repetition, until the resulting graph is stabilised, such that $G^{(t)} = G^{(t+1)}$.
 \end{enumerate}
\end{algorithm}

\begin{theorem}
 Applying the process of Algorithms \ref{alg:decomposition} and \ref{alg:wrapper} to graphs $G$ and $H$ in $M^{'}_k$,
 $$G^{(i)} \cong H^{(i)} \textrm{ if and only if } G \cong H.$$
\end{theorem}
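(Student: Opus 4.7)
The plan is to prove both implications by induction on $i$, with the bulk of the work going into showing that Algorithm \ref{alg:decomposition} is canonical (i.e., depends only on the isomorphism class of its input) and that Definition \ref{def:contract} is canonical in the opposite direction (i.e., $G^{(i)}$ together with its vertex/edge colours determines $G^{(i-1)}$ up to isomorphism).

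For the forward direction, suppose $\phi : G \to H$ is an isomorphism. Since the $k$-dim WL colouring is an isomorphism invariant, $\phi$ maps colour classes of $\textrm{WL}_k(G)$ to colour classes of $\textrm{WL}_k(H)$, preserves the lexicographic ordering on classes, and maps each prime CWS closure $cl(\{u,v\})$ in $G$ to the prime CWS closure $cl(\{\phi(u),\phi(v)\})$ in $H$. By Lemma \ref{thm:assorted}, the partition of each colour class into prime closures produced by Algorithm \ref{alg:decomposition} depends only on the graph and the lex ordering of classes, not on which representative vertex is selected at step (2); hence $\mathcal{T}_H = \phi(\mathcal{T}_G)$. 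Since the canonical contraction of Definition \ref{def:contract} is defined purely in terms of (isomorphism classes of) the contracted subgraph and the sorted multisets of colour-class-indexed incidences to $G \backslash S$, it commutes with $\phi$, so $\phi$ descends to an isomorphism $G^{(1)} \to H^{(1)}$. Iterating, $G^{(i)} \cong H^{(i)}$ for every $i$.

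For the reverse direction, I would argue by downward induction from the stabilised index $t$ to $0$. The key claim is that the canonical contraction is \emph{reversible up to isomorphism}, in the following sense: given $G^{(i)}$ together with the colouring that records the isomorphism class of each contracted $T \in \mathcal{T}_{G^{(i-1)}}$ (encoded in step (2) of Algorithm \ref{alg:wrapper}) and the edge colours that record the sorted sets $w_S$ of colour-class incidences between $V(G^{(i-1)} \backslash T)$ and $T$, one can uniquely reconstruct $G^{(i-1)}$ up to isomorphism. Indeed, for each contracted vertex $v_T$ in $G^{(i)}$, its vertex colour is a full certificate for $T$ (obtained in step (1) via the recursive $k$-dim WL method, under the standing assumption that prime $k$-equivalent graphs and non-$k$-equivalent graphs are both characterised by this method), and the edge colour $\{v_T, w\}$ determines which colour classes of $T$ receive edges from $w$; given an isomorphism $\psi : G^{(i)} \to H^{(i)}$, we can then fix, for each contracted pair $(T_G, T_H) = (T, \psi^{-1}(T))$, an isomorphism $T_G \to T_H$ respecting its colour classes, and glue these together along the preserved colour-class incidences to obtain an isomorphism $G^{(i-1)} \to H^{(i-1)}$.

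The main obstacle is justifying this gluing step, since a priori there is freedom in how to identify the corresponding colour classes inside each pair $(T_G, T_H)$, and different choices could conflict where two contracted subgraphs share a common neighbour in $G^{(i-1)}$. However, the third clause of Definition \ref{def:contract} ensures that the edge colour $\{v_T, w\}$ records, via the sorted set $w_S$, the full multiset of colour classes of $T$ adjacent to $w$, and the CWS property of $T$ (Definition \ref{def:CWS}) means this incidence is uniform on each colour class. Together with the uniqueness of prime closures on $M^{'}_k$ (Lemma \ref{thm:notprime} and clause (2) of Definition \ref{assume:M}), this forces the per-colour-class identifications to be consistent across all of $G^{(i-1)}$, so the local isomorphisms on the $T$'s can be chosen compatibly with $\psi$. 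Stabilisation at $t$ gives the base case $G^{(t)} \cong H^{(t)}$, and the induction terminates at $i = 0$ with $G \cong H$.
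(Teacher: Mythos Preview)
Your approach is the same as the paper's: induct on $i$, using uniqueness of the prime CWS decomposition (Definition~\ref{assume:M}(2) and Lemma~\ref{thm:assorted}) for the forward direction and reversibility of the canonical contraction (asserted by the paper immediately after Definition~\ref{def:contract}) for the reverse. The paper's own proof is a three-sentence sketch citing exactly these two ingredients; you have supplied substantially more detail on the gluing step, which the paper simply takes for granted.

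One small presentational slip: in the reverse direction you start the downward induction at the stabilised index $t$, but the hypothesis only hands you $G^{(i)} \cong H^{(i)}$ for the given $i$, which may well be less than $t$. Just start at $i$ and step down to $0$; stabilisation is irrelevant here.
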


\begin{proof}
 As the prime CWS subgraphs of a graph $G$ are unique (in the sense of Definition \ref{assume:M}), $G \cong H$ only if $\mathcal{T}_G = \mathcal{T}_H$. Hence, as the contraction process of Definition \ref{def:contract} preserves isomorphism, 
 $G^{(1)} \cong H^{(1)}$ if and only if $G \cong H$. Similarly $G^{(i)} \cong H^{(i)}$ if and only if $G^{(i-1)} \cong H^{(i-1)}$, and the result follows.
\end{proof}

\begin{theorem}
 If $G^{(t)} = G^{(t+1)}$ then $G^{(t)}$ is characterised by the recursive $k$-dim WL method.
\end{theorem}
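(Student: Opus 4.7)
The plan is to show that stabilisation $G^{(t)} = G^{(t+1)}$ forces $G^{(t)}$ out of the class $M'_k$ of graphs known to obstruct the recursive $k$-dim WL method, after which the paper's standing assumptions (non-$k$-equivalent graphs and prime $k$-equivalent graphs are each characterised by the recursive WL method in the relevant dimension) finish the job.

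First I would verify that $G^{(t)} = G^{(t+1)}$ implies the set $\mathcal{T}_{G^{(t)}}$ returned by Algorithm \ref{alg:decomposition} contains no non-trivial element. If some $T \in \mathcal{T}_{G^{(t)}}$ had $|V(T)| > 1$, then step (2) of Algorithm \ref{alg:wrapper} would canonically contract $T$ to a single vertex as in Definition \ref{def:contract}, strictly reducing the vertex count and forcing $G^{(t+1)} \neq G^{(t)}$. Hence no non-trivial prime CWS subgraph of $G^{(t)}$ is recorded in any colour class during a fresh pass of the decomposition algorithm.

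Next I would derive $G^{(t)} \notin M'_k$ by contradiction. If $G^{(t)} \in M'_k$, Definition \ref{def:M_k} provides non-isomorphic, vertex-disjoint, mutually CWS, mutually $k$-equivalent subgraphs $S_1, S_2 \subseteq G^{(t)}$. Each $S_i$ is non-trivial CWS in $G^{(t)}$, so the finite lattice of CWS subsets of $S_i$ admits a minimal non-trivial element $P$, which is by definition a non-trivial prime CWS subgraph. Since $P$ is non-trivial, it contains two vertices $u,v$ in some common colour class of $\textrm{WL}_k(G^{(t)})$; the closure $cl(\{u,v\})$ is then a non-trivial CWS subgraph of $P$, and primality of $P$ forces $cl(\{u,v\}) = P$. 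Lemma \ref{thm:notprime} ensures the candidate pair $(u,v)$ is internal to a single $S_i$ rather than straddling $S_1$ and $S_2$, so $cl(\{u,v\})$ genuinely arises at step (3) of Algorithm \ref{alg:decomposition}, contradicting the conclusion of the previous paragraph.

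Having $G^{(t)} \notin M'_k$, the graph is either non-$k$-equivalent or, at worst, a prime $k$-equivalent graph already internal to the reduction loop, and in either case the recursive $k$-dim WL method characterises it by the standing assumption opening this chapter. The principal obstacle I expect is the second step: rigorously verifying that every graph in $M'_k$ admits at least one non-trivial prime CWS closure detectable by Algorithm \ref{alg:decomposition}, and in particular ruling out the degenerate possibility that every candidate closure $cl(\{u,v\})$ fails primality. The two extra hypotheses in Definition \ref{assume:M} (no CWS cliques and uniqueness of prime closures) together with Lemma \ref{thm:notprime} appear to be exactly what is needed to force a usable prime closure into the output of Algorithm \ref{alg:decomposition}, but their precise interaction with the finite lattice structure of CWS subgraphs of $S_i$ is where the argument has to be made most carefully.
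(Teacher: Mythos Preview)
Your first paragraph is exactly the key observation the paper makes: stabilisation $G^{(t)}=G^{(t+1)}$ forces $\mathcal{T}_{G^{(t)}}$ to be trivial, because any non-trivial prime CWS subgraph recorded there would be contracted. From that point on, however, you diverge from the paper and the detour costs you the conclusion.

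The paper does \emph{not} argue via membership in $M'_k$. It argues directly: take \emph{any} CWS $k$-equivalent subgraph $A\subseteq G^{(t)}$ (in particular $A=G^{(t)}$ itself is allowed). If $A$ were not prime it would contain a proper prime CWS subgraph $S$, and $S$ would then appear in $\mathcal{T}_{G^{(t)}}$, contradicting your first paragraph. Hence every CWS $k$-equivalent subgraph of $G^{(t)}$ is prime; taking $A=G^{(t)}$, either $G^{(t)}$ is non-$k$-equivalent (assumption~1 applies) or it is prime $k$-equivalent (assumption~2 applies). That is the whole proof.

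Your route through ``$G^{(t)}\notin M'_k$'' has a genuine gap at the last step. The set $M'_k$ is not the complement of ``non-$k$-equivalent or prime $k$-equivalent''; it is a very specific family (connected, containing a non-isomorphic mutually CWS pair, satisfying the two extra constraints of Definition~\ref{assume:M}). So the implication ``$G^{(t)}\notin M'_k$ $\Rightarrow$ $G^{(t)}$ is non-$k$-equivalent or prime $k$-equivalent'' is simply not available: a $k$-equivalent, non-prime graph could fail to lie in $M'_k$ for many incidental reasons (failing condition~(3) of Definition~\ref{def:M_k}, failing one of the two structural constraints of Definition~\ref{assume:M}, etc.) without being characterised by recursive WL. You seem to sense this, but you locate the difficulty in step~2 rather than step~3; in fact step~2 is fine and step~3 is where the argument breaks.

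The repair is short: drop $M'_k$ entirely and apply the minimality argument you already sketched (a non-prime CWS subgraph has a prime CWS proper subgraph) not to $S_1,S_2$ but to an arbitrary CWS $k$-equivalent $A\subseteq G^{(t)}$, including $A=G^{(t)}$. That collapses your three paragraphs into the paper's two sentences.
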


\begin{proof}
 Let $G^{(t)}$ contain some CWS $k$-equivalent subgraph $A$. If $A$ is not prime, then there exists some prime CWS subgraph $S \subset A$ such that $S \in \mathcal{T}_{G^{(t)}}$. However this implies that  $G^{(t+1)} \neq G^{(t)}$, hence no such $S$ exists, and $A$ is prime.

 Hence if any subgraph $A \subseteq G^{(t)}$ is $m$-equivalent for some $m \ge k$, then $A$ is prime. By assumption, prime $k$-equivalent graphs are characterised by the recursive $k$-dim WL method, and so the result follows.
\end{proof}

\begin{definition}
 The smallest $t$ such that $G^{(t)} = G^{(t+1)}$ is termed the \emph{recursion depth} of $G$.
\end{definition}

\begin{lemma}
 For any $G \in M^{'}_k$, the following hold:
 \begin{itemize}
  \item[$(i)$] $G$ has recursion depth bounded above by $O(\textrm{log } |V(G)|)$. 
  \item[$(ii)$] $G^{(i)}$ can be calculated from $G^{(i-1)}$ in time $O(\textrm{poly}(|V(G^{(i)})|))$.
 \end{itemize}
\end{lemma}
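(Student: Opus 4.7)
The plan is to handle the two parts separately. Part $(ii)$ follows from polynomial bounds on each ingredient of one iteration, while part $(i)$ requires a vertex-counting argument that exploits the partitioning structure established in Lemma~\ref{thm:assorted}.

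For $(ii)$, I would verify one-by-one that each subroutine invoked in producing $G^{(i)}$ from $G^{(i-1)}$ is polynomial in $|V(G^{(i-1)})|$: running the $k$-dim WL method is $O(|V|^{k+1})$; computing each closure $cl(\{u,v\})$ via the explicit iterative rule of Chapter~\ref{sec:WLprops}, and testing its primality against the CWS closure lattice, are both polynomial; characterising each prime CWS subgraph by the recursive $k$-dim WL method is polynomial by the standing assumption of Chapter~\ref{sec:decomposition}; and the canonical contraction of Definition~\ref{def:contract} is linear. Since the prime CWS subgraphs produced by Algorithm~\ref{alg:decomposition} are vertex-disjoint, summing over them yields a polynomial bound in $|V(G^{(i-1)})|$. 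To convert this to the stated bound in $|V(G^{(i)})|$, I would note that each contraction replaces a prime of size at least $2$ by a single vertex, so $|V(G^{(i-1)})|$ and $|V(G^{(i)})|$ are polynomially related.

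For $(i)$, the key lever is Lemma~\ref{thm:assorted}: whenever a colour class $c \in \textrm{WL}_k(G^{(i-1)})$ is touched by any prime CWS subgraph output in iteration $i$, the full set $[c]$ is partitioned by prime closures each containing at least two elements of $[c]$, so the count of vertices of colour $c$ surviving in $G^{(i)}$ is at most $|[c]|/2$. My plan is to establish that whenever $G^{(i-1)} \neq G^{(i)}$, a constant fraction of the vertices of $G^{(i-1)}$ must lie in such touched colour classes, which would give $|V(G^{(i)})| \le \alpha\,|V(G^{(i-1)})|$ for some fixed $\alpha < 1$, and hence a recursion depth of $O(\log|V(G)|)$ by straightforward iteration.

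The main obstacle is precisely this constant-fraction claim, since a priori a single non-trivial step could produce only one prime involving a small colour class. To rule this out, I would combine the restrictions of Definition~\ref{assume:M} (no CWS cliques, uniqueness of prime closures) with Lemma~\ref{thm:notprime} and the propagation across colour classes in Lemma~\ref{thm:assorted}$(ii)$. The goal is to argue that the union of colour classes left untouched by any prime in $G^{(i-1)}$ itself carries a CWS substructure which, under the hypothesis $G \in M^{'}_k$, is either already stable (and so only contributes to the stopping condition $G^{(t)} = G^{(t+1)}$) or must be touched by some prime in turn. Formalising this trade-off---essentially, arguing that non-trivial CWS structure cannot be confined to a vanishingly small colour-class subset of $G^{(i-1)}$ without violating membership in $M^{'}_k$---is where I expect the bulk of the work to lie.
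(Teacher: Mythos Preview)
Your treatment of $(ii)$ matches the paper's argument essentially line for line.

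For $(i)$, however, you have identified the right obstacle but are aiming at the wrong target. The paper does \emph{not} try to show $|V(G^{(i)})| \le \alpha\,|V(G^{(i-1)})|$ for a fixed $\alpha<1$; indeed, untouched colour classes (those yielding no prime closure in step~(4) of Algorithm~\ref{alg:decomposition}) may occupy almost all of $G^{(i-1)}$, so the global shrinkage you want need not hold, and the trade-off argument you sketch in your last paragraph is not what rescues it.

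The paper's route is simpler and sidesteps this entirely: rather than bounding how fast $|V(G^{(i)})|$ decreases, it bounds how many \emph{original} vertices of $G$ are represented by any single prime $T\in\mathcal{T}_{G^{(i)}}$. Condition~(1) of Definition~\ref{assume:M} (no CWS cliques) forces every prime in $\mathcal{T}_G$ to have at least $3$ vertices. At the next level, any prime $T\in\mathcal{T}_{G^{(1)}}$ must contain at least two contracted vertices $v_{T_1},v_{T_2}$ coming from mutually CWS primes $T_1,T_2\in\mathcal{T}_G$, so $T$ represents at least $7$ vertices of $G$; iterating gives a lower bound of $2^{i+2}-1$ original vertices for any $T\in\mathcal{T}_{G^{(i)}}$. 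Since $G^{(t)}\neq G^{(t+1)}$ requires $\mathcal{T}_{G^{(t)}}\neq\emptyset$, this forces $2^{t+2}-1\le |V(G)|$ and hence $t=O(\log|V(G)|)$. The key shift is from ``total size shrinks'' to ``each surviving prime encodes geometrically more of the original graph''.
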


\begin{proof}
 For (i), note that the elements of $\mathcal{T}_G$ partition the colour classes of interest. Moreover, elements of $\mathcal{T}_{G^{(1)}}$ must contain vertices corresponding to the elements of $\mathcal{T}_G$, and two such vertices $v_1,v_2$ are in the same colour class if and only if the corresponding $T_1,T_2 \in \mathcal{T}_G$ are isomorphic and mutually CWS. Now each $T \in \mathcal{T}_G$ contains at least 3 vertices, so similarly each $T \in \mathcal{T}_{G^{(1)}}$ contains at least two vertices corresponding to mutually CWS $T \in \mathcal{T}_G$, and hence corresponds to at least 7 vertices of $G$. Hence each $T \in \mathcal{T}_{G^{(i)}}$ corresponds to at least $2^{i+2} - 1$ vertices of $G$, and (i) follows.

 To show that (ii) holds, note briefly that each step in Algorithm \ref{alg:decomposition} can be done in time $O(\textrm{poly}(|V(G^{(i)})|))$. Namely, Algorithm \ref{alg:decomposition} consists of first applying the $k$-dim WL method, requiring time $O(n^{k+1})$ for a graph on $n$ vertices, followed by calculating the modular closure of at most $O(n^2)$ pairs of vertices (and checking each closure for primality), which can also be accomplished in polynomial time. The implicit extra step of contracting requisite subgraphs of $G \in M_k$ to form a graph in $M'_k$, as detailed in Constructions \ref{constr:1} and \ref{constr:2} is also trivially accomplished in time $O(\textrm{poly}(n))$, for a graph of size $n$, hence (ii) follows.


\end{proof}

\begin{corollary}
 Hence the combination of Algorithms \ref{alg:decomposition} and \ref{alg:wrapper} characterises the graphs of $M_k$ that satisfy the assumptions recalled in \ref{sec:assumptions} in polynomial time.
\end{corollary}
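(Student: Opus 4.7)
The plan is to chain together the preceding theorems and lemmas; no new machinery is required. First I would observe that although Algorithms \ref{alg:decomposition} and \ref{alg:wrapper} are stated for inputs in $M^{'}_k$, a general $G \in M_k$ can be preprocessed into $G_2 \in M^{'}_k$ via Constructions \ref{constr:1} and \ref{constr:2}, which recursively contract maximum CWS cliques and the trivial intersecting-prime-closure subgraphs. Each such contraction is canonical (Definition \ref{def:contract}), preserves isomorphism class, and terminates in polynomially many iterations on a graph whose size strictly decreases at each stage, so the preprocessing is polynomial and can be prepended to Algorithm \ref{alg:wrapper} without loss of generality.

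Next I would invoke the two halves of the correctness/complexity analysis already in place. For correctness, the theorem that $G^{(i)} \cong H^{(i)}$ iff $G \cong H$ propagates isomorphism-invariance through every layer of the recursion, while the theorem that $G^{(t)} = G^{(t+1)}$ forces $G^{(t)}$ to contain no non-prime CWS $m$-equivalent subgraphs for any $m \ge k$, so that all residual $k$-equivalent pieces are prime and are handled by the standing assumption in Section \ref{sec:assumptions} that prime $k$-equivalent graphs are characterised by the recursive $(k+1)$-dim WL method. Together these two statements imply that the certificate computed from $G^{(t)}$ by the recursive WL stage of Algorithm \ref{alg:wrapper} is a complete isomorphism invariant of $G$.

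For the running time, I would combine parts (i) and (ii) of the preceding lemma: the recursion depth is bounded by $O(\log |V(G)|)$ because every $T \in \mathcal{T}_{G^{(i)}}$ represents at least $2^{i+2}-1$ vertices of $G$, and each step from $G^{(i-1)}$ to $G^{(i)}$ is polynomial (the $k$-dim WL stabilisation is $O(n^{k+1})$, and there are at most $O(n^2)$ modular closures to build and test for primality). Summing a logarithmic number of polynomial steps, together with the polynomial preprocessing into $M^{'}_k$ and the final invocation of the recursive WL method on the stabilised $G^{(t)}$, yields an overall polynomial-time procedure.

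The only conceptual obstacle I anticipate is purely one of bookkeeping: verifying that the preprocessing into $M^{'}_k$ is genuinely compatible with Algorithm \ref{alg:wrapper}, in the sense that the contracted vertices introduced in Constructions \ref{constr:1} and \ref{constr:2} do not subsequently give rise to new prime CWS subgraphs outside the scope of the decomposition (this is already implicit in the uniqueness clause of Definition \ref{assume:M} and in Lemma \ref{thm:notprime}). Once this is pointed out, the corollary is a direct consequence of the theorems and lemmas above, and nothing more than their concatenation is needed.
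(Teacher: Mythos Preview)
Your proposal is correct and mirrors the paper's own (implicit) argument: the corollary is stated without a separate proof in the paper, as it is an immediate concatenation of the preceding two theorems (isomorphism preservation under contraction, and characterisability of the stabilised $G^{(t)}$) together with the lemma bounding recursion depth and per-step cost. Your added remark about the polynomial preprocessing from $M_k$ to $M'_k$ via Constructions \ref{constr:1} and \ref{constr:2} is a sensible explicit bookkeeping step that the paper leaves implicit.
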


The significance of these results, and of the assumptions made, will be discussed in the following sections.

\subsection{Discussion of Assumptions}
\label{sec:assumptions}

The decomposition method comprising Algorithms \ref{alg:decomposition} and \ref{alg:wrapper} relies on assumptions regarding the properties of the input graphs, relative to the $k$-dim WL method. Specifically, any graph $G \in M^{'}_k$ will be characterised by Algorithm \ref{alg:wrapper} providing the following assumptions hold:

\begin{enumerate}
\label{list:assumptions} 
 \item Non-$k$-equivalent graphs are characterised by the recursive $k$-dim WL method (where $k \ge 3$).
 \item Prime $k$-equivalent graphs can be characterised by the recursive $(k+1)$-dim WL method.
\end{enumerate}

Non-$k$-equivalent graphs are precisely those which are assigned unique certificates (and hence characterised) by the $k$-dim WL method. This does not necessarily imply that the recursive $k$-dim WL method will characterise such graphs, in that they may not be refined to their orbits at each step. In particular, $3$-isoregular graphs that aren't vertex-transitive, or $4$-isoregular graphs that aren't $2$-transitive will not be refined to their orbits by the $3$-dim and $4$-dim WL method respectively. Whether this assumption holds in general is unknown, however note that as shown in Chapter \ref{sec:ext}, the known $k$-equivalent graphs are characterised by the recursive $(k+1)$-dim WL method, and hence no counterexamples to assumption (1) are yet known.

Similarly the only known prime $k$-equivalent graphs are those described in \cite{CFI} and \cite{Evdokimov99}, and so likewise no counterexamples to assumption (2) are known. One possible method of constructing prime $k$-equivalent graphs which do not satisfy this assumption might be extending a $k$-equivalent graph as described in Chapter \ref{sec:ext}, using unbiased gadgets with vertices corresponding to $3$-tuples of the original graph. In which case the $(k+2)$-dim WL method suffices to refine the resulting graph to its orbits, although it may not be required.

\setcounter{equation}{0}
\section{Conclusions and Future Work}
\label{sec:conclusions}

The goal of this work is to explore the question,
\vspace{-0.3cm}
\begin{center}
 \textit{``Has the $k$-dim WL method been proven to not solve GI?''}
\end{center}
\vspace{-0.3cm}
\noindent This has indeed been proven regarding a direct implementation of the $k$-dim WL method (in \cite{CFI, Evdokimov99}). However one of the essential characteristics of these proofs are that the counterexample graphs found (termed $k$-equivalent graphs) possess very specific properties. As a result, this work explores the possibility of exploiting these restrictive properties to design an extension to the WL method that can characterise these graphs.

We show that the $k$-equivalent graphs constructed in \cite{CFI, Evdokimov99} are individually characterised by the recursive $(k+1)$-dim WL method. These results are expanded on, constructing a family of $k$-equivalent graphs not refined to their orbits by the recursive $k$-dim WL method. Given this family of graphs, we establish various related properties of the $k$-dim WL method, and construct and algorithm that canonically decomposes such $k$-equivalent graphs into graphs for which the recursive $k$-dim WL method succeeds, in the process characterising the original graph.

In the process an extension to the $k$-dim WL method is constructed which efficiently characterises the known $k$-equivalent graphs, and hence represents a potential candidate for solving the GI problem 

The known $k$-equivalent graphs were introduced in \cite{CFI} and \cite{Evdokimov99} as token counterexamples to the $k$-dim WL method. To the extend that this work establishes an extension to the WL method characterising these graphs, it removes the known counterexamples. However minor variations to each family of graphs can be trivially constructed while preserving the property of $k$-equivalence.

The decomposition method presented here is non-trivial in that it does not simply address the token counterexamples in isolation, but holds for all $k$-equivalent graphs for which the assumptions of Section \ref{sec:assumptions} are satisfied.

These assumptions are satisfied by the known $k$-equivalent graphs and minor trivial variants of such families.  Indeed, constructing counterexamples to this extension would require finding graphs with novel properties, for which these assumptions do not hold.

In particular, where $k \ge 3$ and $m \ge k$, counterexamples must belong to at least one of the following categories:
\begin{enumerate}
 \item Prime $m$-equivalent graphs that cannot be refined to their orbits by the $k$-dim WL method.
 \item Alternatively, prime $m$-equivalent graphs that cannot be characterised by the recursive $k$-dim WL method.
 \item Non-$k$-equivalent graphs that cannot be refined to their orbits by the $k$-dim WL method
\end{enumerate}
No such graphs have yet been found, hence a proof that this extended method does not solve GI would require finding graphs with novel properties. In particular, while there is no reason to suspect that these assumptions do hold for general graphs, they have been shown here to hold for all known counterexamples to the recursive $k$-dim WL method.

In relating these results to general $k$-equivalent graphs, we note that not much is known regarding possible general properties of such graphs, as the known cases were, as mentioned, introduced as token counterexamples, and with the exception of the work of \cite{Barghi09, Smith10} the properties of $k$-equivalent graphs have not been explored further.

\subsection{Open Questions}

As mentioned above, very little is known regarding possible general properties of $k$-equivalent graphs. Trivial $k$-equivalent graphs do exist, namely graphs which are $k$-isoregular (also known as $k$-tuple regular), however such graphs have been completely characterised for $k \ge 5$. One interesting open question relating to such graphs is whether $t$-isoregular graphs exist for $t \in \{3,4\}$ that cannot be characterised by the \emph{recursive} $3$-dim WL method. To find such a graph it would suffice to find either a non-vertex-transitive 3-isoregular graph or a 4-isoregular graph that is not distance-transitive, however to the knowledge of the author no such graphs are known to exist.

Apart from the consideration of $k$-isoregular graphs, what other kinds of potentially $k$-equivalent graphs exist? In particular, are there any general properties that such graphs must possess (in addition to $k$-equivalence) that restrict possible types? The two known families of $k$-equivalent graphs share several important properties, and in essence `obtain' their $k$-equivalence in the same way, via the expansion of graphs with large separator sizes. However these properties also make the graphs amenable to classification by the extended WL scheme constructed here.

One question relating to general properties of $k$-equivalent graphs was previously raised in \cite{Smith10}, in which the possibility that the $k$-dim WL method (for some bounded $k$) may suffice to distinguish pairs of strongly regular graphs was raised. It was noted that strongly regular graphs have particularly simple cellular closures (related to the WL method in Chapter \ref{sec:schemes}). In particular, the coherent configuration corresponding to a strongly regular graph with adjacency matrix $A$ has only three basis relations, $\{I, A, (J-I-A)\}$, where $I$ is the identity matrix and $J$ the all-1 matrix. An interesting open problem is an analysis of 
the $k$-extended cellular closures of strongly regular graphs, relating to the question of whether $k$-equivalent graphs can be strongly regular (for some bounded $k > 2$).

Related to the family of $k$-equivalent graphs are graphs which the $k$-dim WL method fails to refine to their orbits. General properties of such graphs are also unknown; trivial cases can formed via constructing a graph containing mutually CWS $k$-equivalent subgraphs, such as those belonging to $M'_k$, however do other graphs exist for which the recursive $k$-dim WL method fails?

Regarding the distinction between refining a graph to its orbits via the $k$-dim WL method, and determining the automorphism group by recursively stabilisation using the recursive $k$-dim WL method, the following question occurs.
If the $k$-dim WL method (for $k \ge 3$) refines a graph to its orbits, must the recursive $k$-dim WL method characterise the graph. In other words, will the vertex-stabilised graph also be refined to its orbits by the $k$-dim WL method?

Finally, for which graphs do the assumptions of Section \ref{sec:assumptions} \emph{not} hold?

\begin{spacing}{1}
\bibliographystyle{test_eprint}
\bibliography{references}
\end{spacing}

\end{document}